\pgfplotsset{compat=newest}
\definecolor{teal}{rgb}{0.0, 0.5, 0.5}
\newcounter{mnotecount}[section]
\newcommand{\rmnote}[1]{}
\DeclareFontFamily{U}{mathb}{\hyphenchar\font45}
\DeclareFontShape{U}{mathb}{m}{n}{
      <5> <6> <7> <8> <9> <10> gen * mathb
      <10.95> mathb10 <12> <14.4> <17.28> <20.74> <24.88> mathb12
      }{}
\DeclareSymbolFont{mathb}{U}{mathb}{m}{n}
\let\dot\relax
\DeclareMathAccent{\dot}{0}{mathb}{"39}
\let\ddot\relax
\DeclareMathAccent{\ddot}{0}{mathb}{"3A}
\let\dddot\relax
\DeclareMathAccent{\dddot}{0}{mathb}{"3B}
\let\ddddot\relax
\DeclareMathAccent{\ddddot}{0}{mathb}{"3C}
\numberwithin{equation}{section}
\theoremstyle{plain}
\newtheorem*{theorem*}{Theorem}
\newtheorem{theorem}{Theorem}[section]
\newtheorem*{lemma*}{Lemma}
\newtheorem{lemma}[theorem]{Lemma}
\newtheorem*{assumption*}{Assumption}
\newtheorem*{proposition*}{Proposition}
\newtheorem{proposition}[theorem]{Proposition}
\newtheorem*{corollary*}{Corollary}
\newtheorem{corollary}[theorem]{Corollary}
\newtheorem*{claim*}{Claim}
\newtheorem*{conjecture*}{Conjecture}
\newtheorem*{question*}{Question}
\newtheorem*{result*}{Result}
\theoremstyle{definition}
\newtheorem*{definition*}{Definition}
\newtheorem{definition}[theorem]{Definition}
\newtheorem*{example*}{Example}
\newtheorem*{algorithm*}{Algorithm}
\newtheorem*{remark*}{Remark}
\newtheorem*{remarks*}{Remarks}
\newtheorem{remark}[theorem]{Remark}
\newtheorem*{convention*}{Convention}
\theoremstyle{plain}
\def\al{\alpha}
\def\be{\beta}
\def\ga{\gamma}
\def\de{\delta}
\def\ep{\epsilon}
\def\rh{\rho}
\def\ta{\tau}
\def\vh{\varphi}
\def\ch{\chi}
\def\ps{\psi}
\def\om{\omega}
\def\Ga{\Gamma}
\def\De{\Delta}
\def\La{\Lambda}
\def\Om{\Omega}
\def\N{\mathbb{N}}
\def\R{\mathbb{R}}
\def\cB{\mathcal{B}}
\def\cC{\mathcal{C}}
\def\cZ{\mathcal{Z}}
\def\sH{\mathscr{H}}
\def\p{\partial}
\def\id{\on{id}}
\def\<{\langle}
\def\>{\rangle}
\renewcommand{\o}{\circ}
\def\ol{\overline}
\def\Hol{\textnormal{H\"ol}}
\def\Zyg{\on{Zyg}}
\let\on=\operatorname
\newcommand{\sr}[1]%
{\ifmmode{}^\dagger\else${}^\dagger$\fi\ifvmode
\vbox to 0pt{\vss
 \hbox to 0pt{\hskip\hsize\hskip1em
 \vbox{\hsize3cm\raggedright\pretolerance10000
 \noindent #1\hfill}\hss}\vss}\else
 \vadjust{\vbox to0pt{\vss%
 \hbox to 0pt{\hskip\hsize\hskip1em%
 \vbox{\hsize3cm\raggedright\pretolerance10000%
 \noindent #1\hfill}\hss}\vss}}\fi%
}
\providecommand{\mapsfrom}{\kern.2em%
\setbox0=\hbox{$\leftarrow$\kern-.10em\rule[0.26mm]{0.1mm}{1.3mm}}\box0%
\kern.3em}
\title[H\"older--Zygmund classes on smooth curves]{H\"older--Zygmund classes on smooth curves}
\author[Armin Rainer]{Armin Rainer}
\address{Fakult\"at f\"ur Mathematik, Universit\"at Wien,
Oskar-Morgenstern-Platz~1, A-1090 Wien, Austria}
\email{armin.rainer@univie.ac.at}
\begin{document}

\begin{abstract}
  We prove that a function in several variables is in the local Zygmund class $\cZ^{m,1}$ if and only if
  its composite with every smooth curve is of class $\cZ^{m,1}$.
  This complements the well-known analogous result for local H\"older--Lipschitz classes $\cC^{m,\al}$
  which we reprove along the way. We demonstrate that these results generalize to mappings between Banach spaces
  and use them to study the regularity of the superposition operator $f_* : g \mapsto f \o g$
  acting on the global Zygmund space $\La_{m+1}(\R^d)$.
  We prove that, for all integers $m,k\ge 1$, the map $f_* : \La_{m+1}(\R^d) \to \La_{m+1}(\R^d)$ is of Lipschitz class $\cC^{k-1,1}$
  if and only if $f \in \cZ^{m+k,1}(\R)$.
\end{abstract}

\thanks{Supported by FWF-Project P 32905-N}
\keywords{H\"older--Zygmund classes, testing regularity on smooth curves,
Boman's theorem, superposition operator, Lipschitz regularity}
\subjclass[2020]{
	26B05, 	    
	26B35,  	
  46T20,  
  47H30,  
	58C25,  	
	58B10}  	
\date{\today}

\maketitle

\section{Introduction}

The main goal of this note is to prove the following $1$-dimensional characterization of
the local \emph{Zygmund class} $\cZ^{m,1}$ of functions in several variables.

\begin{theorem} \label{thm:main1}
  Let $m \in \N$.
  Let $f : U \to \R$ be a function defined on an open subset $U \subseteq \R^d$. The following conditions are equivalent:
  \begin{enumerate}
    \item $f \o c \in \cZ^{m,1}(\R)$ for each $c \in \cC^\infty(\R,U)$.
    \item $f \in \cZ^{m,1}(U)$.
  \end{enumerate}
\end{theorem}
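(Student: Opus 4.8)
We treat the two implications separately; the substance lies in (1)$\Rightarrow$(2).

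\emph{(2)$\Rightarrow$(1).} Fix $c\in\cC^\infty(\R,U)$. Since $f\in\cZ^{m,1}(U)\subseteq\cC^m(U)$, the chain rule gives $f\o c\in\cC^m(\R)$, and Fa\`a di Bruno's formula expresses $(f\o c)^{(m)}$ as a finite sum of terms, each of the form $(\partial^\ga f\o c)\cdot P_\ga(c',\dots,c^{(m)})$ with $1\le|\ga|\le m$ and $P_\ga$ a polynomial. The terms with $|\ga|<m$ involve $\partial^\ga f\in\cC^1(U)$, so after composing with $c$ and multiplying by the ($\cC^\infty$) factor $P_\ga$ they are locally Lipschitz, hence locally in $\cZ^{0,1}(\R)$. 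The terms with $|\ga|=m$ are $(\partial^\ga f\o c)$ times a $\cC^\infty$ function, with $\partial^\ga f$ in the Zygmund class. So it suffices that $\cZ^{0,1}(\R)$ be locally stable under multiplication by $\cC^\infty$ functions and that $g\o c\in\cZ^{0,1}(\R)$ whenever $g\in\cZ^{0,1}(\R^d)$ and $c$ is smooth; the first follows from a Leibniz-type identity for the second difference $\Delta^2$, the second by substituting $c(t)\pm s\,c'(t)$ for $c(t\pm s)$ inside $g$, the error being controlled by the modulus of continuity of the Zygmund function $g$, namely $O(|s|^2\log(1/|s|))=O(|s|)$. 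Adding up, $(f\o c)^{(m)}\in\cZ^{0,1}(\R)$, i.e.\ $f\o c\in\cZ^{m,1}(\R)$.

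\emph{(1)$\Rightarrow$(2).} The conclusion is local, so fix a compact $K\subseteq U$ and choose $\rho>0$ so that the closed $\rho$-neighbourhood of $K$ lies in $U$; it suffices to bound the Zygmund seminorm of $f$ near each point of $K$. From (1) every restriction $f\o c$ lies in $\cZ^{m,1}(\R)\subseteq\cC^m(\R)$, and a standard special-curve argument shows $f$ is continuous, hence locally bounded. The decisive step is a \emph{uniform boundedness principle}: if $\cB$ is any set of smooth curves $[-1,1]\to U$ with images in $K$ and all derivatives uniformly bounded, then $\{f\o c:c\in\cB\}$ is bounded in $\cZ^{m,1}([-1,1])$. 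Granting this, apply it to the affine curves $c_{x,v}(t)=x+tv$ with $x\in K$ and $|v|\le\rho$, obtaining $M:=\sup_{x,v}\|f\o c_{x,v}\|_{\cZ^{m,1}([-1,1])}<\infty$. The one-variable norm satisfies $\|g\|_{\cZ^{m,1}(\R)}\approx\|g\|_\infty+\sup_{t,\,0<|s|\le1}|s|^{-(m+1)}|\Delta^{m+2}_s g(t)|$, and every short increment based in $K$ is realised by a single $c_{x,v}$ with $|v|=\rho$ (take $v=\rho h/|h|$, $s=|h|/\rho$); hence $|h|^{-(m+1)}|\Delta^{m+2}_h f(x)|\le\rho^{-(m+1)}M$ for all $x\in K$ and all sufficiently small $h\in\R^d$. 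Since membership in $\cZ^{m,1}(\R^d)$ is equivalent to local boundedness together with exactly such an estimate for vector increments (the classical difference characterisation of the Zygmund class), this yields $f\in\cZ^{m,1}(U)$.

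It remains to prove the uniform boundedness principle, which is the heart of the proof; run with $\Delta^{m+1}$ and exponent $m+\al$ in place of $\Delta^{m+2}$ and $m+1$, the same reasoning reproves the H\"older--Lipschitz version. Suppose it fails: there is a bounded sequence $c_n$ of smooth curves into $K$ with $\|f\o c_n\|_{\cZ^{m,1}([-1,1])}\to\infty$. As $f$ is bounded on $K$, the blow-up lives in the Zygmund seminorm at small scales; passing to a subsequence and using compactness of $K$, one may assume the offending increments accumulate at a single point of $K$, occur at scales decaying faster than any prescribed rate, and still carry seminorms tending to $\infty$. A special-curve lemma of Kriegl--Michor type then splices suitably reparametrised and contracted copies of the $c_n$ onto pairwise disjoint short subintervals accumulating at one parameter value, producing a single smooth curve $c$ for which $f\o c\notin\cZ^{m,1}$ near that value --- contradicting (1). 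I expect the main obstacle to be the scaling bookkeeping in this construction: contracting a curve by a factor $\mu<1$ multiplies the Zygmund seminorm of $f\o c$ by $\mu^{m+1}$, and the subinterval widths and contraction factors are further constrained by smoothness of the spliced curve, so one must exploit that, $f$ being non-Zygmund, the offending configurations can be chosen with difference ratios as large as desired at scales as small as desired, which outweighs the loss from contraction. Carrying this out while tracking the difference-based (rather than derivative-based) seminorm defining $\cZ^{m,1}$ is the only genuinely delicate part.
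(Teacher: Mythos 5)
Your proposal is correct in outline but follows a genuinely different route from the paper, so let me compare the two. For (2)$\Rightarrow$(1) the paper simply cites the literature (\Cref{prop:composition}); your Fa\`a di Bruno reduction together with the $O(t\log\frac1t)$ modulus of continuity of Zygmund functions is a sound way to make this self-contained and is close in spirit to the estimates in \Cref{sec:superposition}. The real divergence is in (1)$\Rightarrow$(2). The paper argues by induction on $m$: the key step (\Cref{lem:inductionstep}) extracts $\p_2 f(\cdot,0)$ as a $\cZ^{m-1,1}$-function of one variable, which forces it to manufacture the transverse derivative out of function values via a Taylor formula for Zygmund functions (\Cref{lem:Taylor}) and the approximation operator $\mathbf{A}_m$ (\Cref{lem:firstder}); only the one-dimensional difference characterization (\Cref{thm:Krantz}, \Cref{thm:Zchar}) is ever used. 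You bypass the induction entirely: the curve-lemma contradiction argument applied to the family of affine curves $c_{x,v}$ yields the uniform estimate $|\De^{m+2}_h f(x)|\le C_K|h|^{m+1}$ for $x$ in a compact set and $h\in\R^d$ small, and you then invoke the $d$-dimensional difference characterization of $\La_{m+1}(\R^d)$ to conclude. Your scaling bookkeeping for the splicing (difference ratios chosen $\ge n\,2^{n(m+1)}$ against contraction factors $2^{-n}$) is exactly the paper's, so that part is fine. What your route buys is brevity and a transparent reduction (non-uniform testing on all smooth curves $\Rightarrow$ uniform testing on lines $\Rightarrow$ classical harmonic analysis); what it costs is that the passage from the uniform directional difference estimate to genuine $\cC^m$-smoothness with Zygmund top derivatives is delegated to an external classical theorem, which is precisely the content the paper rebuilds by hand from the one-variable case, and the paper's induction moreover yields $\p_j f\o c\in\cZ^{m-1,1}(\R)$ for every smooth curve $c$ (\Cref{prop:key4}), which is what powers the Banach-space version in \Cref{sec:Banach}.

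One point you should not gloss over: localizing the difference characterization --- your step from the estimate on $K$ to $f\in\cZ^{m,1}$ near $K$ --- requires a cutoff, and the cross terms in $\De^{m+2}_h(\ch f)$ need bounds on the lower-order differences $\De^j_h f$ for $j\le m+1$. The paper is explicitly careful about exactly this issue (the extra $\de^{m}_{\on{eq}}$ hypothesis in \Cref{thm:Zchar} and Claims (i)--(ii) in \Cref{sec:proofkey}, which are supplied by Boman's theorem one degree lower). This is fixable, either by Marchaud's inequality on compact sets or by running your uniform boundedness argument once more at the Lipschitz scale of one degree lower, but as written it is the thinnest joint in your argument.
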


By definition, $\cZ^{m,1}(U)$ consists of all $\cC^{m}$-functions $f : U \to \R$
such that for all compact subsets $K \subseteq U$ and all multiindices
$\ga \in \N^d$ with $|\ga| = m$
the set
\[
  \Big\{\frac{f^{(\ga)}(x+h)- 2 f^{(\ga)}(x)+  f^{(\ga)}(x-h)}{h} : x,x\pm h \in K, \, h \ne 0\Big\}
\]
is bounded.

The local Zygmund classes $\cZ^{m,1}(U)$ fit in the scale of
local \emph{H\"older--Lipschitz classes}:
in fact, $\cC^{m,1}(U) \subseteq \cZ^{m,1}(U)\subseteq \cC^{m,\al}(U)$ for all $m \in \N$ and $\al \in (0,1)$ with strict inclusions.
In some respects (e.g.\ in harmonic analysis, cf.\ \cite{Krantz83})
the Zygmund class $\cZ^{m,1}(U)$ is more natural and important than the
Lipschitz class $\cC^{m,1}(U)$.
We speak of the scale of local \emph{H\"older--Zygmund classes}, where for $\al =1$ the Zygmund class replaces the respective Lipschitz class:
\[
\{\cC^{m,\al}(U): m \in \N,\, \al \in (0,1)\}\cup \{\cZ^{m,1}(U): m \in \N\}.
\]
It corresponds to the scale of global H\"older--Zygmund classes $\{\La_s(\R^d): s >0\}$ which are the Besov classes $\cB^{s,\infty}_\infty(\R^d)$;
see \Cref{sec:definitions} for precise definitions.

The H\"older--Lipschitz analogue of \Cref{thm:main1} is due to Boman \cite{Boman67}:

\begin{theorem} \label{thm:main2}
  Let $m \in \N$ and $\al \in (0,1]$.
  Let $f : U \to \R$ be a function defined on an open subset $U \subseteq \R^d$. The following conditions are equivalent:
  \begin{enumerate}
    \item $f \o c \in \cC^{m,\al}(\R)$ for each $c \in \cC^\infty(\R,U)$.
    \item $f \in \cC^{m,\al}(U)$.
  \end{enumerate}
\end{theorem}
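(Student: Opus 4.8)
The plan is to prove the nontrivial implication $(1)\Rightarrow(2)$; the converse follows from the chain rule together with the fact that each derivative $f^{(\ga)}$ is locally bounded and each $c^{(j)}$ is locally bounded on compacta, so that $f\o c$ inherits the required Hölder estimate on its top-order derivative.

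For $(1)\Rightarrow(2)$ I would proceed in three stages. \emph{First, bootstrap from curves to a priori regularity.} One shows that condition (1) already forces $f$ to be of class $\cC^m$: the case $m=0$ is the statement that a function whose composite with every smooth curve is continuous (indeed $\cC^{0,\al}$) is continuous, and an induction on $m$ using the directional difference quotients $\tfrac{1}{t}(f(x+tv)-f(x))$ along carefully chosen curves shows that all partial derivatives up to order $m$ exist and are continuous; this is the technical heart of Boman's original argument and I would cite/reprove it via the uniform boundedness principle. More precisely, for fixed compact $K$ and each affine segment $t\mapsto x+tv$ in $K$, hypothesis (1) gives $f(x+tv)\in\cC^{m,\al}$ in $t$ with some seminorm bound; a Baire category / closed graph argument over the family of all such unit-speed segments yields a \emph{uniform} bound $\sup_{x,v}\|t\mapsto f(x+tv)\|_{\cC^{m,\al}}<\infty$, which is the kind of quantitative input needed for the next stage.

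\emph{Second, reduce the Hölder estimate on $f^{(\ga)}$ to curves.} Having $f\in\cC^m$, it remains to bound, for $|\ga|=m$, the quantity $|f^{(\ga)}(x)-f^{(\ga)}(y)|/|x-y|^\al$ for $x,y$ in a compact set. Write $f^{(\ga)}$ as a directional derivative $\partial_v^m f$ for a suitable direction and join $x$ to $y$ by a smooth curve $c$ — e.g.\ a straight segment reparametrized to have all derivatives vanishing to high order at the endpoints, or a spline — arranged so that $f^{(\ga)}(x)-f^{(\ga)}(y)$ is read off from the difference of $(f\o c)^{(m)}$ at two parameter values, modulo lower-order terms controlled by the already-established $\cC^m$ bound and the Faà di Bruno expansion. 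The point is that $(f\o c)^{(m)}(s)$ is a universal polynomial in the $f^{(\be)}(c(s))$, $|\be|\le m$, and the $c^{(j)}(s)$; choosing $c$ with $\dot c$ constant $=v$ in a neighborhood of the relevant parameters kills all but the leading term $f^{(\ga)}(c(s))$, so that the $\cC^{m,\al}$-control of $f\o c$ transfers directly to a Hölder bound on $f^{(\ga)}$ along the segment from $x$ to $y$.

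\emph{Third, uniformity in the curve.} The subtlety — and what I expect to be the main obstacle — is that the estimate obtained in stage two has constants depending on the chosen curve $c$, in particular on the $\cC^\infty$-seminorms $\|c^{(j)}\|_\infty$; to get a bound on $\|f^{(\ga)}\|_{\cC^{0,\al}(K)}$ that is uniform in $x,y$ one must be able to choose the connecting curves from a bounded family in $\cC^\infty$ while their ``$x-y$ gap'' shrinks. This is exactly where the uniform boundedness principle from stage one re-enters: applying it to the (linear, or affinely parametrized) family of curves $\{c_{x,y}\}$ shows that $\sup_{x,y}\|f\o c_{x,y}\|_{\cC^{m,\al}}<\infty$, and a scaling computation — replacing $c_{x,y}(s)$ by $c_{x,y}(\la s)$ and tracking how both sides of the Hölder inequality rescale — converts this into the homogeneous estimate $|f^{(\ga)}(x)-f^{(\ga)}(y)|\le C|x-y|^\al$ with $C$ independent of $x,y\in K$. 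Assembling the three stages over all $|\ga|=m$ and all compact $K\subseteq U$ gives $f\in\cC^{m,\al}(U)$. I would remark that the same architecture — a priori $\cC^m$-regularity, reduction to segments via Faà di Bruno with the leading term isolated, and a scaling argument backed by uniform boundedness — is precisely what gets adapted in the proof of Theorem \ref{thm:main1}, with the first-order difference $|f^{(\ga)}(x)-f^{(\ga)}(y)|$ replaced by the second-order (Zygmund) difference $|f^{(\ga)}(x+h)-2f^{(\ga)}(x)+f^{(\ga)}(x-h)|$.
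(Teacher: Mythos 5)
Your outline correctly locates the two difficulties of the implication (1)$\Rightarrow$(2) --- converting non-uniform, curve-by-curve information into a uniform estimate, and extracting an individual derivative $f^{(\ga)}$ from one-dimensional data --- but the mechanisms you propose for both contain genuine gaps, and neither is the one the paper uses. First, the Baire category / closed graph argument over the family of unit-speed segments does not deliver $\sup_{x,v}\|t\mapsto f(x+tv)\|_{\cC^{m,\al}}<\infty$: writing $E_N$ for the set of parameters $(x,v)$ whose segment has seminorm at most $N$, Baire only shows that some $E_N$ has nonempty interior, i.e.\ uniformity near a \emph{generic} segment, and you give no way to propagate this to a neighborhood of an arbitrary (possibly bad) point of $K$ --- which is exactly the situation one must exclude. (There is also a circularity: closedness of $E_N$ presupposes regularity of $f$ that is part of what is being proved.) The paper never establishes a uniform bound at all; it argues by contradiction and uses the general curve lemma (\Cref{curvelemma}) to glue a fast-falling sequence of rescaled segments through the putative bad points $x_n$, $x_n+h_n$ into a \emph{single} smooth curve $c$, whose composite with $f$ then violates hypothesis (1) directly. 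This gluing construction is the technical heart of Boman's argument and is absent from your proposal; without it (or a completed Baire argument) your stages one and three do not close.

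Second, in stage two the claim that choosing $\dot c\equiv v$ ``kills all but the leading term $f^{(\ga)}(c(s))$'' is not correct: along a straight segment Fa\`a di Bruno gives $(f\o c)^{(m)}(s)=\sum_{|\be|=m}\binom{m}{\be}v^\be f^{(\be)}(x+sv)$, a fixed combination of \emph{all} top-order derivatives weighted by $v^\be$, and it is H\"older-controlled only in the direction $v$ itself; recovering $|f^{(\ga)}(x)-f^{(\ga)}(y)|\le C|x-y|^\al$ for arbitrary $x,y$ from this needs polarization over several directions plus the nontrivial ``uniform separate regularity implies joint regularity'' theorem, none of which you supply. The paper instead runs an induction on $m$ that extracts only one first-order derivative per step: the two-variable statement \Cref{lem:inductionstepHL} that $\p_2f(\cdot,0)\in\cC^{m,\al}(\R)$, proved via the finite-difference characterization \Cref{thm:Hchar} together with the approximation $hf'(x)=\sum_j b_jf(x+jh)+O_I(|h|^{m+\al})$ with universal Vandermonde coefficients \eqref{eq:firstderHL}; your proposal has no analogue of this approximation lemma, which is what makes the contradiction in the top-order estimate quantitative. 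Your treatment of the converse implication (2)$\Rightarrow$(1) via Fa\`a di Bruno is essentially fine and matches \Cref{prop:composition}.
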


Faure \cite{Faure89} generalized this result to $\cC^{m,\om}$, where $\om$ is any modulus of continuity.

The results of \Cref{thm:main1} and \Cref{thm:main2} have a certain similarity with the well-known fact (e.g.\ \cite[Theorem 9.1]{Krantz83})
that a function $f : \R^d \to \R$ belongs to $\La_s(\R^d)$ if it does so \emph{uniformly} in each variable separately.
There is no uniformity (with respect to $c$) required in condition (1)
of \Cref{thm:main1} and \Cref{thm:main2}, respectively; on the other hand, $f$ is tested on the much larger set of all $\cC^\infty$-curves in the
domain (instead of just affine lines parallel to the axes).

We will prove \Cref{thm:main1} in \Cref{sec:proofs}. Only a few modifications are necessary to obtain also a proof
of \Cref{thm:main2}. The general proof scheme is the one used in \cite[Section 4.3]{FK88} and \cite[Section 12]{KM97} for the Lipschitz case.
It is combined with the characterization of H\"older--Zygmund classes in terms of finite differences; see \cite{Krantz83}.

The characterization in \Cref{thm:main1} enables us to extend the notion of local Zygmund classes to mappings between
convenient vector spaces (i.e.\ Mackey--complete locally convex spaces) such as has been done for H\"older--Lipschitz classes
in \cite{FK88,FF89,KM97}. This leads to a version of \Cref{thm:main1} for maps between Banach spaces; see \Cref{cor:Banach}.

We also discuss versions of \Cref{thm:main1}, where the domain of definition of $f$ is not an open set.
Then a loss of regularity depending on the geometry of the boundary occurs; see \Cref{thm:arcsmooth1},
\Cref{thm:arcsmooth2}, and \Cref{thm:arcsmooth3} as well as \cite{Kriegl97} and \cite{Rainer18,Rainer:2021tr}.

In the final section we utilize a version of \Cref{thm:main2} for maps between Banach spaces
to study the regularity of the nonlinear superposition operator $f_* : g \mapsto f\o g$ acting
on the global Zygmund class $\La_{m+1}(\R^d)$.

\begin{theorem} \label{lefttranslationLip}
  Let $m,k \in \N_{\ge 1}$ and $f : \R \to \R$ a function.
  Then $f_*$ acts on $\La_{m+1}(\R^d)$ and $f_*: \La_{m+1}(\R^d) \rightarrow \La_{m+1}(\R^d)$
  is of Lipschitz class $\cC^{k-1,1}$ if and only if $f \in \cZ^{m+k,1}(\R)$.
\end{theorem}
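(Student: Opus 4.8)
The plan is to prove the two implications separately, reducing the ``if'' direction to the behaviour of $f_*$ on smooth curves by means of the Banach space version of \Cref{thm:main2}, and proving the ``only if'' direction by feeding carefully chosen affine curves into the hypothesis. For the implication $f \in \cZ^{m+k,1}(\R) \Rightarrow f_*$ acts and is $\cC^{k-1,1}$, the first ingredient I would record is the composition estimate for Zygmund classes. Since $\La_{m+1}(\R^d) \hookrightarrow \cC^m_b(\R^d)$, every $g \in \La_{m+1}(\R^d)$ has range in a compact interval, and $f \in \cZ^{m+k,1}(\R)$ gives $f^{(p)} \in \cZ^{m+k-p,1}(\R) \subseteq \cZ^{m,1}(\R)$ for $0 \le p \le k$. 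Standard Littlewood--Paley/paraproduct estimates then yield $f^{(p)} \o g \in \La_{m+1}(\R^d)$ with $\|f^{(p)}\o g\|_{\La_{m+1}} \le C(\|g\|_\infty)(1+\|g\|_{\La_{m+1}})^{N}$, that $\La_{m+1}(\R^d)$ is, up to renorming, a multiplication algebra, and --- using $f^{(p)}\o g_1 - f^{(p)}\o g_2 = \bigl(\int_0^1 f^{(p+1)}(g_2+\theta(g_1-g_2))\,d\theta\bigr)(g_1-g_2)$ together with $f^{(p+1)} \in \cZ^{m,1}(\R)$ for $p \le k-1$ --- that $g \mapsto f^{(p)}\o g$ is locally Lipschitz $\La_{m+1}(\R^d) \to \La_{m+1}(\R^d)$. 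The case $p=0$ shows $f_*$ acts on $\La_{m+1}(\R^d)$. To obtain $\cC^{k-1,1}$, by the Banach version of \Cref{thm:main2} it suffices that $t \mapsto f \o c(t)$ lie in $\cC^{k-1,1}(\R,\La_{m+1}(\R^d))$ for each $c \in \cC^\infty(\R,\La_{m+1}(\R^d))$; fixing such $c$, I would write down the Faà di Bruno candidate $P_j(t) := \sum_\pi \bigl(f^{(\#\pi)}\o c(t)\bigr) \prod_{B\in\pi} c^{(\#B)}(t)$ ($\pi$ ranging over partitions of $\{1,\dots,j\}$, the product taken in $\La_{m+1}(\R^d)$), note that $P_j(t) \in \La_{m+1}(\R^d)$ for $0 \le j \le k-1$ by the composition estimate and the algebra property (here $\#\pi \le j \le k-1$, and $c^{(\#B)}(t) \in \La_{m+1}(\R^d)$ since $c$ is smooth), and verify by the usual difference quotient argument, using the displayed integral identity and the continuity of multiplication in $\La_{m+1}(\R^d)$, that $P_j' = P_{j+1}$ for $j \le k-2$ and that $P_{k-1}$ is locally Lipschitz into $\La_{m+1}(\R^d)$ --- the latter needing only the above estimates with $f^{(p+1)}$, $p+1 \le k$, and the local Lipschitz continuity of each $c^{(q)}$ as a curve. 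Since $P_0(t) = f \o c(t)$, this finishes the direction.

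For the converse, assume $f_*$ acts on $\La_{m+1}(\R^d)$ and is $\cC^{k-1,1}$. I would fix a unit vector $b \in \R^d$ and $\chi \in \cC^\infty_c(\R^d)$ with $\chi \equiv 1$ on a ball $B$ about $0$, and test $f_*$ on the affine curve $c(t)(x) := \chi(x)\bigl(t + \langle b,x\rangle\bigr) \in \cC^\infty(\R,\La_{m+1}(\R^d))$. Then $f_* \o c \in \cC^{k-1,1}(\R,\La_{m+1}(\R^d))$, and on $B$ we have $f_*(c(t))|_B = f(t+\langle b,\cdot\rangle)|_B$. Evaluating $x$-derivatives at points of $B$ (bounded linear functionals on $\La_{m+1}(\R^d)$) and using that $t\mapsto f_*(c(t))$ is $\cC^{k-1}$ first gives $f \in \cC^{m+k-1}(\R)$; then, since for the affine inner function Faà di Bruno collapses to the single-block term, $\partial_t^{k-1}(f_*\o c)(t)|_B = f^{(k-1)}(t+\langle b,\cdot\rangle)|_B$, and restricting further to the segment $L=\{sb:|s|<\ve\}\subseteq B$ --- restriction being bounded $\La_{m+1}(\R^d) \to \cZ^{m,1}(L)$ --- shows that $t \mapsto f^{(k-1)}(t+\cdot)$ is Lipschitz from $\R$ into $\cZ^{m,1}((-\ve,\ve))$. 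Dividing $f^{(k-1)}(t_1+\cdot)-f^{(k-1)}(t_2+\cdot)$ by $t_1-t_2$ and letting $t_1 \to t_2$, the quotients converge in $\cC^m_{\mathrm{loc}}$ (a $\cZ^{m,1}$-bounded family is precompact there, and $f \in \cC^{m+k-1}\subseteq\cC^{k}$) to $f^{(k)}(t_2+\cdot)$, which therefore lies in $\cZ^{m,1}((-\ve,\ve))$ by lower semicontinuity of the Zygmund seminorm. As $t_2 \in \R$ and $\ve>0$ are arbitrary, $f^{(k)} \in \cZ^{m,1}(\R)$, i.e.\ $f \in \cZ^{m+k,1}(\R)$.

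The main obstacle is the composition machinery underlying the ``if'' direction: one must establish that $\phi \in \cZ^{m,1}(\R)$ composed with a bounded $g \in \La_{m+1}(\R^d)$ stays in $\La_{m+1}(\R^d)$ with a bound polynomial in $\|g\|_{\La_{m+1}}$, that $\La_{m+1}(\R^d)$ is a multiplication algebra, and that $g \mapsto \phi \o g$ is locally Lipschitz when $\phi \in \cZ^{m+1,1}(\R)$. These are accessible via standard Besov-space (Littlewood--Paley, paraproduct) techniques, but setting them up with the precise quantitative and continuity statements needed above is the real content; the Faà di Bruno bookkeeping, the difference quotient arguments, and the lower-semicontinuity step are then routine.
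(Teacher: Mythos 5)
Your overall architecture matches the paper's: reduce the ``if'' direction to curves via the Banach-space ($\cC^{k-1,1}$) analogue of \Cref{thm:main2}, and prove the ``only if'' direction by feeding the translation family $t\mapsto (x\mapsto f(x+t))$ (suitably cut off) into the hypothesis. Your necessity argument is a correct alternative to the paper's: where the paper extracts finite-difference bounds ($\De^k_t\De^2_v f^{(m)}(x)/(t^kv)$ bounded, then $t=v$, plus the $\on{Lip}_m$-embedding to control $\de^k_{\on{eq}}f^{(m)}$, and concludes via \Cref{thm:Zchar}), you pass to the limit in the Lipschitz difference quotients of $t\mapsto f^{(k-1)}(t+\cdot)$ using precompactness of $\cZ^{m,1}$-bounded sets in $\cC^m_{\mathrm{loc}}$ and lower semicontinuity of the Zygmund seminorm. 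Both work; yours even obtains $f\in\cC^{m+k}$ and the Zygmund bound in one stroke and handles general $d$ directly.

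The genuine gap is in the sufficiency direction, and you name it yourself: everything hinges on the claim that $g\mapsto \phi\o g$ is locally Lipschitz on $\La_{m+1}(\R^d)$ when $\phi'\in\cZ^{m,1}(\R)$ (together with the acting estimate with polynomial bounds and the algebra property). This is precisely the content of the paper's Proposition \ref{prop:Lipsuf} for $k=1$, and it is not a routine corollary of the acting theorem: the delicate point is the second difference $\De^2_v(f^{(m+1)}\o g)$, where $f^{(m+1)}$ is merely Zygmund (hence not Lipschitz) and one must still produce a factor $O(|v|)$. The paper handles this by the finite-difference chain rule \eqref{chainrule}, splitting into $\De^1_{\De^2_vg}f^{(m+1)}$ and $\De^2_{\De^1_vg}f^{(m+1)}$, and exploiting $f^{(m+1)}\in\cC^{0,\al}$ for all $\al<1$ together with $\De^2_vg=O(|v|^{1+\be})$ and the choice $\al=(1+\be)^{-1}$; cross terms are controlled via $\cC^{0,\om}$ with $\om(t)=t\log\frac1t$. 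Your mean-value factorization $\phi\o g_1-\phi\o g_2=\bigl(\int_0^1\phi'(g_\theta)\,d\theta\bigr)(g_1-g_2)$ does reduce the Lipschitz claim to the acting estimate for $\phi'\in\cZ^{m,1}$ plus the algebra property, so the route is viable; but those two inputs are exactly ``the real content'' and are only cited, not proved. (Also note the paper avoids your full Fa\`a di Bruno bookkeeping in $t$ by inducting on $k$ through $h_t=(f'\o g)\,g_t$, which is cosmetically simpler.) As it stands, the ``if'' half is a correct plan resting on unestablished estimates rather than a proof.
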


This complements results of \cite{Bourdaud:2002tb} on the $\cC^k$-regularity of $f_* : \La_{m+1}(\R^d) \to \La_{m+1}(\R^d)$
based on totally different methods.

\subsection*{Notation}
We denote by $\N = \{0,1,2,\ldots\}$ the set of nonnegative integers and set $\N_{\ge k} := \{n \in \N : n\ge k\}$.
We will make use of standard multiindex notation.
The partial derivative of a function $f$ with respect to the $j$-th variable is denoted by $\p_j f$.
If $\ga \in \N^d$ is a multiindex, then we use the notation $f^{(\ga)}  = \p^\ga f = \p_1^{\ga_1} \cdots \p_d^{\ga_d} f$
for the corresponding partial derivative of higher order.

\section{H\"older--Zygmund classes} \label{sec:definitions}

\subsection{Function spaces}

By a \emph{modulus of continuity} we mean
an increasing subadditive function $\om : [0,\infty) \to [0,\infty)$ such that
$\lim_{t \to 0}\om(t) = 0$ and $t \mapsto t/\om(t)$ is locally bounded.

Let $\om$ be a modulus of continuity.
Let $U \subseteq \R^d$ be an open set.
Recall that $\cC^{0,\om}(U)$ denotes the set of functions $f : U \to \R$ such that
\[
  \sup_{x,x+h \in K,\, h \ne 0}\frac{|f(x+h)-f(x)|}{\om(|h|)} < \infty
\]
for all compact subsets $K \subseteq U$.
For a positive integer $m$,
\[
  \cC^{m,\om}(U) := \big\{ f \in \cC^m(U) : f^{(\ga)} \in \cC^{0,\om}(U) \text{ for all } \ga \in \N^d \text{ with } |\ga| = m \big\}.
\]
For $\om(t)=t^\al$, where $\al \in (0,1]$, we obtain the \emph{H\"older--Lipschitz classes} $\cC^{m,\al}(U)$.

The \emph{Zygmund class} $\cZ^{0,1}(U)$ is the set of all continuous functions $f : U \to \R$
such that
\begin{equation} \label{eq:Zdef}
  \sup_{x,x\pm h \in K,\, h \ne 0}\frac{|f(x+h)-2f(x) + f(x-h)|}{|h|} < \infty,
\end{equation}
and if $m$ is a positive integer,
\[
  \cZ^{m,1}(U) := \big\{ f \in \cC^m(U) : f^{(\ga)} \in \cZ^{0,1}(U) \text{ for all } \ga \in \N^d \text{ with } |\ga| = m \big\}.
\]
Continuity of $f$ does not follow from \eqref{eq:Zdef} and has to be imposed (cf.\ \cite[Proposition 2.7]{Krantz83}).
All classes $\cC^{m,\om}(U)$ and $\cZ^{m,1}(U)$ are endowed with their natural locally convex topologies.

Note that for all $m \in \N$ and $0 < \al < \be < 1$, we have the strict continuous inclusions
\begin{align} \label{eq:inclusions}
    \cC^{m+1}(U) \subsetneq \cC^{m,1}(U) \subsetneq \cZ^{m,1}(U) \subsetneq \cC^{m,\om}(U) \subsetneq \cC^{m,\be}(U) \subsetneq \cC^{m,\al}(U) \subsetneq \cC^m(U),
\end{align}
where $\om(t):= t \log \frac{1}{t}$. For instance $\cZ^{0,1}(\R)$ contains the Weierstrass function $t \mapsto \sum_k 2^{-k} \sin(2^k t)$
which is nowhere differentiable and thus not locally Lipschitz.

  The spaces $\cC^{m,\al}(U)$, $0<\al<1$, and $\cZ^{m,1}(U)$ are local versions of the global \emph{H\"older--Zygmund spaces}
  $\La_s(\R^d)$, $s>0$, which are Banach spaces defined as follows.
  For $0 < s \le 1$, $\La_s(\R^d)$ consists of all bounded continuous functions $f : \R^d \to \R$ such that
  $\|f\|_{\La_s} <\infty$, where
  \begin{align*}
      \|f\|_{\La_s}  &:= \sup_{x \in \R^d} |f(x)| + \sup_{x,h \in \R^d,\, h \ne 0} \frac{|f(x+h)-f(x)|}{|h|^s},
      \quad \text{ if } 0 < s< 1,
      \\
      \|f\|_{\La_1}  &:= \sup_{x \in \R^d} |f(x)| + \sup_{x,h \in \R^d,\, h \ne 0} \frac{|f(x+h)-2f(x)+f(x-h)|}{|h|}
  \end{align*}
  For $s>1$ the space $\La_s(\R^d)$ is defined recursively:
  take the unique $m \in \N$ with $m < s \le m+1$.
  Then $\La_s(\R^d)$ consists of all functions $f : \R^d \to \R$ of class $\cC^{m}$ such that
  \[
    \|f\|_{\La_s}  := \|f\|_{\La_{s-1}} + \sum_{j=1}^d \|\p_j f\|_{\La_{s-1}} < \infty.
  \]
  For $0< s < t$ there is a continuous inclusion $\La_t(\R^d) \hookrightarrow \La_s(\R^d)$.
  For integer $s = m+1 >0$, $\La_s(\R^d)$ strictly contains the \emph{Lipschitz space}
  \[
    \on{Lip}_s(\R^d) := \big\{ f \in \cC^{m}(\R^d) : \|f\|_{\on{Lip}_s} < \infty \big\},
  \]
  where (again recursively)
  \begin{align*}
     \|f\|_{\on{Lip}_1}  &:= \sup_{x \in \R^d} |f(x)| + \sup_{x,h \in \R^d,\, h \ne 0} \frac{|f(x+h)-f(x)|}{|h|},
     \\
     \|f\|_{\on{Lip}_s}  &:= \|f\|_{\on{Lip}_{s-1}} + \sum_{j=1}^d \|\p_j f\|_{\on{Lip}_{s-1}} \quad \text{ for } s>1.
  \end{align*}
  Note that $\La_t(\R^d) \hookrightarrow \on{Lip}_s(\R^d) \hookrightarrow \La_s(\R^d)$ if
  $s$ is an integer and $t>s$.

\subsection{Difference quotients and finite differences}

Let $f : \R \to \R$ be a function.
The \emph{difference quotient} $\de^m f(x_0,\ldots,x_m)$ of order $m$ on the pairwise disjoint points $x_0,\ldots,x_m\in \R$
is recursively defined by $\de^0 f(x_0) := f(x_0)$ and
\begin{align*}
  \de^m f(x_0,\ldots,x_m) := m \frac{\de^{m-1} f(x_0,\ldots,x_{m-1}) - \de^{m-1} f(x_1,\ldots,x_m)}{x_0 - x_m}.
\end{align*}
It is symmetric in $x_0,\ldots,x_m$.
One checks easily that
\[
 	\de^m f(x_0,\ldots,x_m) = m! \sum_{i=0}^m f(x_i) \prod_{\substack{0\le j \le m\\ j \ne i}} \frac{1}{x_i - x_j}.
\]
We will mainly use the \emph{equidistant difference quotient}
\begin{align*}
  \de^m_{\text{eq}} f(x;h) &:= \de^m f(x,x+h,\ldots,x+mh)
  \\
  &= \frac{1}{h^m} \sum_{i=0}^m (-1)^{m-i} \binom{m}{i} f(x +ih)
  =: \frac{1}{h^m} \De^m_hf(x),
\end{align*}
where $\De^m_h f(x)$
is the \emph{(forward) finite difference} of order $m$ recursively defined by
\begin{align*}
	\De^0_h f(x) &:= f(x), \quad
	\De^1_h f(x) := f(x+h) - f(x), \quad
  \De^m_h f(x) := \De^1_h (\De^{m-1}_h f(x)).
\end{align*}
Note that, for $f \in \cC^1(\R)$,
\begin{equation} \label{eq:integral}
   \De^2_h f(x) = \int_x^{x+h} \De_h^1 f'(t)\, dt.
\end{equation}

\subsection{Product and chain rule}

Let $f,g,f_i : \R \to \R$.
We have
\begin{align} \label{eq:Leibniz}
   \De^m_h (f_1\cdots f_n)(x)
   &= \sum_{i_1 + \cdots + i_n = m} \binom{m}{i_1,\ldots,i_n} \prod^n_{j=1} \De^{i_j}_h f_j\Big(x + \Big(m - \sum_{k=j}^n i_k\Big)h\Big).
\end{align}
We also need a chain rule for finite differences of order one and two:
  \begin{align} \label{chainrule}
  \begin{split}
    \De^1_h (f \o g)(x) &= \De^1_{\De^1_h g(x)} f(g(x)),
    \\
    \De^2_h (f \o g)(x) &= \De^1_{\De^2_h g(x)} f(2g(x+h)-g(x))
    +
    \De^2_{\De^1_h g(x)} f(g(x)).
  \end{split}
  \end{align}
The validity of these formulas is easily established by expanding the right-hand sides.


\subsection{H\"older--Zygmund classes in terms of difference quotients}
First we recall a description of local Lipschitz classes.

\begin{theorem}[{\cite[Lemma 12.4]{KM97}}] \label{thm:Lchar}
  Let $m \in \N$.
  Let $f : \R \to \R$ be continuous. The following conditions are equivalent:
  \begin{enumerate}
    \item $f \in \cC^{m,1}(\R)$.
    \item $(x,h) \mapsto \de^{m+1}_{\on{eq}} f(x;h)$
    is locally bounded on $\R \times (\R\setminus \{0\})$.
  \end{enumerate}
\end{theorem}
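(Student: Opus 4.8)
The plan is to prove the two implications separately: $(1)\Rightarrow(2)$ quickly, via the mean value theorem for divided differences, and $(2)\Rightarrow(1)$ by a mollification argument.

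For $(1)\Rightarrow(2)$, I would first note the elementary identity obtained from the recursion defining $\de^{m+1}$:
\[
  \de^{m+1}_{\on{eq}}f(x;h) = \frac{\de^{m}_{\on{eq}}f(x+h;h)-\de^{m}_{\on{eq}}f(x;h)}{h}.
\]
If $f\in\cC^{m,1}(\R)$ then in particular $f\in\cC^m(\R)$, so the mean value theorem for divided differences gives $\de^{m}_{\on{eq}}f(y;h)=f^{(m)}(\xi(y,h))$ with $\xi(y,h)$ in the closed interval with endpoints $y$ and $y+mh$. Fix a compact $K\subseteq\R$ and $r>0$, let $K':=K+[-(m+1)r,(m+1)r]$, and let $L$ be a Lipschitz constant for $f^{(m)}$ on $K'$. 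For $x\in K$ and $0<|h|\le r$ the two points $\xi(x,h)$ and $\xi(x+h,h)$ both lie in $K'$ and satisfy $|\xi(x+h,h)-\xi(x,h)|\le(m+1)|h|$, so the identity above gives $|\de^{m+1}_{\on{eq}}f(x;h)|\le(m+1)L$. Since this holds for all $0<|h|\le r$, it is precisely the asserted local bound (in particular it is uniform as $h\to0$).

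For $(2)\Rightarrow(1)$, I would fix a mollifier $\ph\in\cC^\infty_c(\R)$ with $\int_\R\ph=1$ and $\supp\ph\subseteq[-1,1]$, put $\ph_\ve(t):=\ve^{-1}\ph(t/\ve)$ and $f_\ve:=f\ast\ph_\ve\in\cC^\infty(\R)$. Because the finite difference operator acts on the translation variable, $\De^{m+1}_hf_\ve=(\De^{m+1}_hf)\ast\ph_\ve$; hence, for a given compact $K$ and $r>0$, the hypothesis on $f$ (applied on $K':=K+[-1,1]$) yields a constant $M$, independent of $\ve$, with $|\de^{m+1}_{\on{eq}}f_\ve(x;h)|\le M$ for all $\ve\in(0,1]$, $x\in K$ and $0<|h|\le r$. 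Since $f_\ve$ is smooth, letting $h\to0$ gives $|f_\ve^{(m+1)}|\le M$ on $K$; and $\|f_\ve\|_{L^\infty(K)}\le\sup_{K'}|f|$ for all $\ve\in(0,1]$. By a standard interpolation inequality for derivatives on a compact interval, the families $(f_\ve^{(j)})_{\ve\in(0,1]}$, $0\le j\le m$, are uniformly bounded on compact subintervals, and each is equicontinuous there (for $j<m$ because $f_\ve^{(j+1)}$ is bounded, for $j=m$ because $f_\ve^{(m)}$ is $M$-Lipschitz). By Arzel\`a--Ascoli and a diagonal argument there is a sequence $\ve_k\to0$ with $f_{\ve_k}^{(j)}\to g_j$ locally uniformly for each $0\le j\le m$; then $g_j'=g_{j+1}$, and $g_0=f$ because $f$ is continuous and $f_{\ve_k}\to f$ locally uniformly. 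Therefore $f\in\cC^m(\R)$ with $f^{(m)}=g_m$, and $f^{(m)}$ is $M$-Lipschitz on $K$ as a locally uniform limit of $M$-Lipschitz maps. Since $K$ was arbitrary, $f\in\cC^{m,1}(\R)$.

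The step I expect to be the main obstacle is the passage, within $(2)\Rightarrow(1)$, from the difference-quotient bound to genuine $\cC^m$-differentiability of $f$: a bound on $\de^{m+1}_{\on{eq}}f(x;h)$ for fixed $x$ carries no differentiability information (e.g.\ $h\mapsto h\sin(1/h)$ has vanishing second difference at $0$ but is not differentiable there), so it is essential to use the bound uniformly in $x$ over compacts and, crucially, all the way down to $h\to0$. Mollification is the device that turns this uniform control into a uniform bound on the smooth approximants $f_\ve$ and their top-order derivative; the remaining derivatives are controlled by interpolation, and the conclusion follows from Arzel\`a--Ascoli. The technical points to watch are the mild shrinkage of domains under convolution and the fact that the constant $M$ must be chosen independent of $\ve$.
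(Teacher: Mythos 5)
Your proof is correct, and it is worth pointing out that the paper does not actually prove this statement: it is imported verbatim from \cite[Lemma 12.4]{KM97}, so you are supplying an argument where the paper supplies only a citation. Both implications check out. For (1)$\Rightarrow$(2) you correctly exploit that, with the paper's normalization ($\de^m$ is $m!$ times the Newton divided difference), the mean value theorem for divided differences reads $\de^m_{\on{eq}}f(y;h)=f^{(m)}(\xi)$ with no residual factorial, and the telescoping identity $\de^{m+1}_{\on{eq}}f(x;h)=h^{-1}\big(\de^m_{\on{eq}}f(x+h;h)-\de^m_{\on{eq}}f(x;h)\big)$ then yields the bound $(m+1)L$ uniformly for $x\in K$ and $0<|h|\le r$, which is exactly the ``bounded on bounded sets'' meaning of local boundedness used throughout the paper. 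For (2)$\Rightarrow$(1), the commutation $\De^{m+1}_h(f*\ph_\ve)=(\De^{m+1}_hf)*\ph_\ve$ correctly converts the hypothesis into an $\ve$-uniform bound on $f_\ve^{(m+1)}$ over compact intervals, and Landau--Kolmogorov interpolation plus Arzel\`a--Ascoli finish the job; the identification $g_0=f$ is where the standing continuity hypothesis on $f$ enters, as it must. This is a genuinely different route from the one in \cite{FK88,KM97}, which stays entirely inside the calculus of difference quotients (roughly, an induction on $m$ using non-equidistant divided differences), a formulation designed to survive in convenient vector spaces where mollification is unavailable; your argument is more classical and self-contained but tied to the scalar setting, which is all that is needed here. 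Two cosmetic points: take $\ph\ge 0$ (or replace $M$ by $M\|\ph\|_{L^1}$) so the convolution bound is literally $M$; and in your closing aside, $h\mapsto h\sin(1/h)$ has bounded \emph{symmetric} second difference at $0$, whereas the forward quotient $\de^2_{\on{eq}}$ at $x=0$ is actually unbounded for that example --- the moral (pointwise bounds carry no differentiability information) is right, but the example fits the symmetric difference better.
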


As a consequence  we obtain

\begin{corollary}
  \label{cor:bd}
  Let $m \in \N$.
  Let $f : \R \to \R$ be continuous.
  That $(x,h) \mapsto \de^{m}_{\on{eq}} f(x;h)$ is locally bounded implies that $(x,h) \mapsto \de^{j}_{\on{eq}} f(x;h)$
  is locally bounded for all $j\le m$.
\end{corollary}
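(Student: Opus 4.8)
The plan is to deduce \Cref{cor:bd} from \Cref{thm:Lchar} by a downward induction on $j$, with the single step $j \to j-1$ supplied by the integral representation \eqref{eq:integral}. Fix a compact interval $[a,b] \subseteq \R$ and a compact interval $I \subseteq \R \setminus \{0\}$; it suffices to bound $\de^{j}_{\on{eq}} f$ on $[a,b] \times I$ under the hypothesis that $\de^{m}_{\on{eq}} f$ is locally bounded. Since $\de^{m}_{\on{eq}} f(x;h) = h^{-m}\De^m_h f(x)$, local boundedness of $(x,h) \mapsto \de^{m}_{\on{eq}} f(x;h)$ on a neighbourhood of $[a,b]\times I$ means exactly that $\De^m_h f(x)$ is controlled by $C|h|^m$ there. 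Applying \Cref{thm:Lchar} with $m-1$ in place of $m$, this says $f \in \cC^{m-1,1}(\R)$; in particular $f \in \cC^{m-1}(\R)$, and $f^{(m-1)}$ is locally Lipschitz.

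The core of the argument is the following claim: if $f \in \cC^{m-1,1}(\R)$ then $f^{(m-1-i)} \in \cC^{i,1}(\R)$ for every $0 \le i \le m-1$, which via \Cref{thm:Lchar} makes $(x,h)\mapsto \de^{i+1}_{\on{eq}} f^{(m-1-i)}(x;h)$ locally bounded, and finally I relate $\de^{i+1}_{\on{eq}} f^{(m-1-i)}$ back to $\de^{j}_{\on{eq}} f$ for the appropriate $j$. The clean way to organize this is to iterate \eqref{eq:integral}. Differentiating under the integral sign (legitimate since $f' \in \cC^{m-2,1}$ when $m \ge 2$), or rather iterating the identity, one gets for $f \in \cC^{k}(\R)$ that
\[
  \De^{k+1}_h f(x) = \int_{0}^{h}\!\!\cdots\!\int_{0}^{h} f^{(k)}\!\Big(x + t_1 + \cdots + t_k + \textstyle\int \Big)\, \ldots
\]
— more precisely, a $k$-fold iteration of \eqref{eq:integral} expresses $\De^{k+1}_h f(x)$ as a $k$-dimensional integral over a cube of side $|h|$ of a first finite difference $\De^1_h f^{(k)}$ of the top derivative. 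Hence $|\de^{k+1}_{\on{eq}} f(x;h)| = |h|^{-(k+1)}|\De^{k+1}_h f(x)| \le |h|^{-(k+1)} \cdot |h|^{k} \cdot \sup |\De^1_h f^{(k)}| = |h|^{-1}\sup|\De^1_h f^{(k)}(\cdot)|$, where the sup is over the relevant compact set. When $f^{(k)}$ is locally Lipschitz this last quantity is $\le C$, so $\de^{k+1}_{\on{eq}} f$ is locally bounded. Taking $k = j-1$ and using that $f^{(j-1)}$ is locally Lipschitz (true for every $j-1 \le m-1$ because $f \in \cC^{m-1,1}(\R)$), we conclude that $\de^{j}_{\on{eq}} f$ is locally bounded for each $1 \le j \le m$; the case $j = 0$ is trivial since $\de^{0}_{\on{eq}} f(x;h) = f(x)$ and $f$ is continuous.

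The main obstacle is bookkeeping rather than depth: one must check that the iterated version of \eqref{eq:integral} holds with the correct cube of integration and that the endpoint of $\De^1_h f^{(k)}$ appearing inside stays in a fixed compact set as $(x,h)$ ranges over $[a,b]\times I$ (it does, because all arguments lie within $\max I$ of $[a,b]$). Two minor points to be careful about: the hypothesis only gives local boundedness of $\de^m_{\on{eq}} f$, so one should first intersect with a neighbourhood on which it is bounded before invoking \Cref{thm:Lchar} — this is harmless since \Cref{thm:Lchar} is a local statement and $\cC^{m-1,1}$ is a local class. Alternatively, and perhaps more transparently for the write-up, one can bypass the explicit iterated integral: \Cref{thm:Lchar} gives $f \in \cC^{m-1,1}(\R) \subseteq \cC^{j-1,1}(\R)$ for every $j \le m$ by the inclusions \eqref{eq:inclusions}, and then \Cref{thm:Lchar} applied in the other direction (with $j-1$ in place of $m$) immediately yields that $\de^{j}_{\on{eq}} f$ is locally bounded. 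This second route is essentially a one-line proof and is the one I would actually present, using the iterated-integral computation only if a self-contained estimate avoiding the $\cC^{m-1,1}$ detour is preferred.
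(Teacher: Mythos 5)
Your second, preferred route is exactly the argument the paper intends (the corollary is stated there as an immediate consequence of \Cref{thm:Lchar}): local boundedness of $\de^{m}_{\on{eq}}f$ gives $f\in\cC^{m-1,1}(\R)\subseteq\cC^{j-1,1}(\R)$ for $1\le j\le m$ via \eqref{eq:inclusions}, and \Cref{thm:Lchar} applied again yields local boundedness of $\de^{j}_{\on{eq}}f$, with $j=0$ trivial by continuity of $f$. The iterated-integral computation is a workable but unnecessary detour (and its displayed formula is garbled as written), so presenting only the two-application argument is the right call.
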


To get a similar characterization of local H\"older--Zygmund classes we recall

\begin{theorem}[{\cite[Theorem 6.1]{Krantz83}}] \label{thm:Krantz}
  If $f \in L^\infty(\R) \cap C^0(\R)$ and $0 < s <n$ with $n \in \N$, then
  $f \in \La_s(\R)$ is equivalent to $|\De_h^n f(x)| \le C\, |h|^s$ for all $x,h \in \R$.
\end{theorem}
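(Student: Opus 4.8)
The plan is to prove the equivalence in three stages, pushing everything down to the base range $0<s\le 1$, where by the recursive definition of $\La_s(\R)$ the two conditions are essentially identical. Throughout, $C$ denotes a constant that may change from line to line.

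\emph{Stage 1 (order reduction).} First I would show that for integers $n>n'>s$ the bound $|\De_h^n f(x)|\le C|h|^s$ is equivalent to $|\De_h^{n'}f(x)|\le C|h|^s$, so that only the minimal order $n_0:=\lfloor s\rfloor+1$ matters. Raising the order is trivial: since $\De_h^{k+1}f(x)=\De_h^k f(x+h)-\De_h^k f(x)$, a bound at order $k$ yields one at order $k+1$ with the same exponent. Lowering the order is a Marchaud-type argument. Writing $T_h$ for translation by $h$, one has $\De_{2h}^{k}=(T_h^2-I)^k=\De_h^k(2I+\De_h)^k$, which upon expansion isolates the leading term $2^k\De_h^k$ plus a combination of differences $\De_h^{k+j}$, $1\le j\le k$, of order exceeding $k$. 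Bounding those higher-order terms via the order-raising step and setting $\om_k(h):=\sup_x|\De_h^k f(x)|$, this gives $\om_k(h)\le 2^{-k}\om_k(2h)+C|h|^s$. Iterating dyadically and summing the geometric series $\sum_j 2^{j(s-k)}$, which converges precisely because $k>s$, while the remainder $2^{-Nk}\om_k(2^N h)\to0$ by boundedness of $f$, produces $\om_k(h)\le C|h|^s$. Thus one descends from any $n>s$ to $n_0$.

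\emph{Stage 2 (base range).} For $0<s<1$ the minimal order is $n_0=1$, and $|\De_h^1 f(x)|\le C|h|^s$ is exactly the defining condition of $\La_s(\R)$; for $s=1$ the minimal order is $n_0=2$, and $|\De_h^2 f(x)|\le C|h|$ is exactly the Zygmund condition defining $\La_1(\R)$. Combined with Stage 1, this settles the theorem whenever $0<s\le1$.

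\emph{Stage 3 (lifting smoothness).} For $s>1$ set $m:=\lfloor s\rfloor$, so $0<s-m\le1$; by the recursive definition $f\in\La_s(\R)$ iff $f\in\cC^m(\R)$ and $f^{(m)}\in\La_{s-m}(\R)$, and I would prove the two implications by induction on $m$. For $f\in\La_s\Rightarrow$ the difference bound, the base range gives $|\De_h^{n-m}f^{(m)}(x)|\le C|h|^{s-m}$, and iterating the integral representation underlying \eqref{eq:integral}, namely $\De_h^n f(x)=\int_{[0,h]^m}\De_h^{n-m}f^{(m)}\bigl(x+t_1+\cdots+t_m\bigr)\,dt$, yields $|\De_h^n f(x)|\le|h|^m\cdot C|h|^{s-m}=C|h|^s$. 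The reverse implication is the crux. Starting from $|\De_h^{n_0}f(x)|\le C|h|^s$ with $n_0\ge2$, I would first show $f\in\cC^1$: the identity $\De_{2h}^1 f=\De_h^2 f+2\De_h^1 f$ gives $Q_{2h}(x)-Q_h(x)=\tfrac12 h^{-1}\De_h^2 f(x)$ for the difference quotients $Q_h(x):=h^{-1}\De_h^1 f(x)$, and since $|\De_h^2 f(x)|\le C|h|^{\min(s,2)}$ has exponent $>1$, the dyadic telescoping sum converges to a continuous $f'$ together with a Taylor estimate $|\De_h^1 f(x)-f'(x)h|\le C|h|^{\min(s,2)}$. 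I would then transfer the difference condition to $f'$, establishing $|\De_h^{n_0-1}f'(x)|\le C|h|^{s-1}$, the minimal-order condition at exponent $s-1$; applying the inductive hypothesis to $f'$ gives $f'\in\La_{s-1}$, hence $f\in\cC^m$ with $f^{(m)}\in\La_{s-m}$, i.e.\ $f\in\La_s(\R)$.

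\emph{Main obstacle.} The delicate point is the differentiation step in Stage 3: converting order-$n$ difference control of $f$ into order-$(n-1)$ difference control of $f'$ at one lower exponent. The existence of $f'$ rests on the boundedness hypothesis (to kill the dyadic remainder) and on $s>1$ (for summability), but transferring the \emph{quantitative} bound to $f'$ is exactly an inversion of the integral representation, and this is where the Marchaud estimate of Stage 1 is genuinely needed: a naive use of $\De_h^n f(x)=\int_0^h\De_h^{n-1}f'(x+t)\,dt$ controls only the average of $\De_h^{n-1}f'$ over a window of width $h$, not its pointwise values, so recovering the pointwise bound requires the full dyadic machinery rather than a one-line mean-value argument.
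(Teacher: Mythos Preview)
The paper does not supply a proof of this statement: it is quoted from \cite[Theorem~6.1]{Krantz83} and invoked as a black box in the proofs of \Cref{thm:Zchar} and \Cref{thm:Hchar}. There is therefore no argument in the paper against which to compare your proposal.

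For what it is worth, your three-stage plan (Marchaud-type order reduction, the base range $0<s\le1$, inductive lifting via differentiation) is a standard elementary route and is sound in outline. One minor correction: in Stage~3 your Stage~1 does not deliver $|\De_h^2 f|\le C|h|^{\min(s,2)}$ when $s\ge2$, since the dyadic Marchaud descent to order $k$ requires $k>s$; you only obtain exponent $2-\ep$ for each $\ep>0$, which still suffices for the Cauchy-sequence argument producing $f'$. You correctly flag the passage $|\De_h^n f|\le C|h|^s\Rightarrow|\De_h^{n-1}f'|\le C|h|^{s-1}$ as the genuine obstacle; one way to close it is to combine the remainder bound $f'(x)=h^{-1}\De_h^1 f(x)+O(|h|^{\de})$ (some $\de>0$) coming from the Cauchy argument with the identity $\De_h^{n-1}\bigl(h^{-1}\De_h^1 f\bigr)=h^{-1}\De_h^n f$, which yields the desired estimate with a suboptimal exponent, and then bootstrap.
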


For local Zygmund classes we may infer

\begin{theorem} \label{thm:Zchar}
  Let $m \in \N$.
  Let $f : \R \to \R$ be continuous. The following conditions are equivalent:
  \begin{enumerate}
    \item $f \in \cZ^{m,1}(\R)$.
    \item $(x,h) \mapsto h\, \de^{m+2}_{\on{eq}} f(x;h)$
    and
    $(x,h) \mapsto \de^{m}_{\on{eq}} f(x;h)$ are locally bounded on $\R \times (\R\setminus \{0\})$.
  \end{enumerate}
\end{theorem}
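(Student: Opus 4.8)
\emph{Plan.} The strategy is to reduce everything to Krantz's theorem (\Cref{thm:Krantz}) applied locally, combined with the recursive nature of the difference quotient $\de$. The statement is a purely local one, so we work on a fixed bounded interval and pass to global statements by multiplying with cutoff functions.

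First I would establish the relation between the equidistant difference quotient $\de^m_{\on{eq}}f(x;h) = h^{-m}\De^m_h f(x)$ of $f$ and the finite differences of its derivatives. The key observation, iterating \eqref{eq:integral}, is that for $f \in \cC^m$ one has an identity expressing $\De^{m+2}_h f(x)$ as an $m$-fold iterated integral of $\De^2_h f^{(m)}$ over a simplex (more precisely, $\De^n_h f(x) = \int_{[0,h]^n} f^{(n)}(x + t_1 + \cdots + t_n)\,dt$, and one peels off $m$ of the integrations to get $\De^{m+2}_h f(x) = \int_{[0,h]^m}\De^2_h f^{(m)}(x+t_1+\cdots+t_m)\,dt$). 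Dividing by $h^{m+2}$ shows that $h\,\de^{m+2}_{\on{eq}}f(x;h)$ is the average of $h^{-1}\De^2_h f^{(m)}(x+\sigma)$ over $\sigma$ ranging in a box of sidelength $h$. Hence local boundedness of $h\,\de^{m+2}_{\on{eq}}f(x;h)$ is equivalent (given that $f\in\cC^m$) to the local boundedness of $h^{-1}\De^2_h f^{(\ga)}(x)$ for $|\ga|=m$, which is precisely the defining Zygmund condition \eqref{eq:Zdef} for $f^{(\ga)}$ on compacta. So once we know $f \in \cC^m(\R)$, the two formulations of the $\De^2$-type condition match up.

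The remaining issue is to extract, from the hypothesis in (2), that $f \in \cC^m(\R)$. Here the second half of (2)—local boundedness of $\de^m_{\on{eq}}f(x;h)$—is what delivers regularity. By \Cref{cor:bd}, local boundedness of $\de^m_{\on{eq}}f$ gives local boundedness of $\de^j_{\on{eq}}f$ for all $j\le m$; I would fix a bounded interval $I$, multiply $f$ by a smooth cutoff equal to $1$ near $I$, and apply \Cref{thm:Krantz} with $n = m+1$: the estimate $|\De^{m+1}_h f(x)|\le C|h|^{m+1}$ (which follows from $|\de^{m+1}_{\on{eq}}f(x;h)| = |h|^{-(m+1)}|\De^{m+1}_h f(x)|$ being bounded, after also using the $\de^j$ bounds to control the lower-order finite differences and the sup-norm of the truncated $f$) places the truncated function in $\La_{m+1}(\R)$, hence in particular it is $\cC^m$ with $f^{(\ga)}$ continuous on $I$ for $|\ga|=m$. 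Since $I$ was arbitrary, $f \in \cC^m(\R)$. This closes the direction (2)$\Rightarrow$(1): $f\in\cC^m$ plus the $\De^2$-equivalence from the previous paragraph gives $f^{(\ga)}\in\cZ^{0,1}$ locally. For the converse (1)$\Rightarrow$(2): if $f\in\cZ^{m,1}(\R)$ then $f\in\cC^m$ and, by the inclusions \eqref{eq:inclusions}, also $f\in\cC^{m,\al}$ for any $\al\in(0,1)$; truncating and invoking \Cref{thm:Krantz} in both directions gives $|\De^{m+1}_h f(x)| \lesssim |h|^{m+\al}$ locally, hence $\de^m_{\on{eq}}f$ is locally bounded (indeed a stronger $|h|^\al$-type estimate holds), while the $\De^2$-equivalence gives local boundedness of $h\,\de^{m+2}_{\on{eq}}f$ directly from the definition of $\cZ^{m,1}$.

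\emph{Main obstacle.} The delicate point is the bookkeeping in applying \Cref{thm:Krantz}, which is stated for bounded functions $f\in L^\infty(\R)\cap C^0(\R)$ and with a \emph{global} estimate $|\De^n_h f(x)|\le C|h|^s$ for all $x,h$, whereas our hypotheses are genuinely local (boundedness over $\R\times(\R\setminus\{0\})$ here means local boundedness, i.e.\ uniform on compacta). Reconciling these requires a careful cutoff argument: one must check that multiplying by a $\cC^\infty$ bump does not destroy the difference-quotient bounds—this is where the Leibniz-type formula \eqref{eq:Leibniz} for $\De^m_h$ of a product enters, together with \Cref{cor:bd} to control all the lower-order factors $\De^{i}_h f$ simultaneously—and that the resulting global estimate on the truncation, transported back, yields the desired conclusion only on the interior of the region where the cutoff is identically $1$. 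Getting the index $n$ in \Cref{thm:Krantz} right ($n=m+1$ for the regularity/$\cC^m$ step, and $n=m+2$ matching the Zygmund order) and making sure the ranges $0<s<n$ are respected in each invocation is the part that needs the most care.
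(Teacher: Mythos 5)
Your direction (1)$\Rightarrow$(2) is fine, and in fact the iterated--integral identity $\De^{m+2}_h f(x)=\int_{[0,h]^m}\De^2_h f^{(m)}(x+t_1+\cdots+t_m)\,dt$ is a nice, more elementary route to the bound on $h\,\de^{m+2}_{\on{eq}}f$ than the paper's appeal to the extension theorem. The problems are in (2)$\Rightarrow$(1), and there are two of them. First, your regularity step invokes \Cref{thm:Krantz} with $n=m+1$ and $s=m+1$, which violates the hypothesis $0<s<n$; worse, the input estimate $|\De^{m+1}_h f(x)|\le C|h|^{m+1}$ is not available. Hypothesis (2) together with \Cref{cor:bd} only yields $|\De^j_h f(x)|\le C|h|^j$ for $j\le m$, hence $|\De^{m+1}_h f(x)|\le C|h|^m$; local boundedness of $\de^{m+1}_{\on{eq}}f$ is, by \Cref{thm:Lchar}, \emph{equivalent} to $f\in\cC^{m,1}(\R)$, a strictly smaller class than $\cZ^{m,1}(\R)$ (cf.\ \eqref{eq:inclusions}), so if that estimate followed from (2) the theorem would be false. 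Second, your ``$\De^2$-equivalence'' is only half true: the averaging identity shows that the Zygmund condition on $f^{(m)}$ implies boundedness of $h\,\de^{m+2}_{\on{eq}}f$, but a bounded average does not bound the integrand, so the converse implication --- which is exactly the one you use to conclude $f^{(\ga)}\in\cZ^{0,1}$ --- is not justified by this identity. That converse is the genuinely hard content of \Cref{thm:Krantz} (the direction $|\De^n_h f|\le C|h|^s\Rightarrow f\in\La_s$) with $n=m+2$, $s=m+1$.

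Both defects are repaired by a single correct application of \Cref{thm:Krantz}. Fix a bounded interval $I$, a cutoff $\ch$ equal to $1$ near $I$, and set $g:=\ch f$. By the Leibniz rule \eqref{eq:Leibniz}, $\De^{m+2}_h g$ is a sum of terms $\De^i_h\ch(y)\,\De^j_h f(z)$ with $i+j=m+2$; for $j\le m$ use $|\De^j_h f|\le C|h|^j$ and smoothness of $\ch$, for $j=m+1$ use $|\De^{m+1}_h f|\le C|h|^m$ and $|\De^1_h\ch|\le C|h|$, and for $j=m+2$ use the hypothesis on $h\,\de^{m+2}_{\on{eq}}f$. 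This gives $|\De^{m+2}_h g(x)|\le C|h|^{m+1}$ for all $x,h$, and now \Cref{thm:Krantz} with $n=m+2>s=m+1$ yields $g\in\La_{m+1}(\R)$, which by the recursive definition of $\La_{m+1}$ delivers \emph{both} $g\in\cC^m$ \emph{and} the Zygmund condition on $g^{(m)}$ in one stroke; since $g=f$ near $I$ and $I$ was arbitrary, $f\in\cZ^{m,1}(\R)$. This is the paper's argument, and your own remark that $n=m+2$ ``matches the Zygmund order'' shows you were close to it; the version you actually wrote down, however, does not go through.
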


\begin{proof}
    (1) $\Rightarrow$ (2)
    Let $f  \in \cZ^{m,1}(\R)$ and $I \subseteq \R$ a bounded interval.
    Then $f|_I$ has an extension $F \in \La_{m+1}(\R)$ (such that $\|F\|_{\La_{m+1}(\R)} \le C\, \|f\|_{{\La_{m+1}(I)}}$ for some $C>0$); see \cite[Section 14]{Krantz83}.
    By \Cref{thm:Krantz}, $|\De^{m+2}_h F(x)| \le C\, |h|^{m+1}$ for all $x,h \in \R$.
    From this it is easy to conclude that $(x,h) \mapsto h\, \de^{m+2}_{\on{eq}} f(x;h)$ is locally bounded.
    That also $(x,h) \mapsto \de^{m}_{\on{eq}} f(x;h)$ is locally bounded is trivial for $m=0$
    and follows from \Cref{thm:Lchar} for $m\ge 1$, since
    $f \in \cZ^{m,1}(\R) \subseteq \cC^{m-1,1}(\R)$ by \eqref{eq:inclusions}.

    (2) $\Rightarrow$ (1)
    Fix a bounded interval $I \subseteq \R$ and a $\cC^\infty$-function $\ch : \R \to [0,1]$ with compact support
    which is $1$ in a neighborhood of $I$. Then, by the product rule \eqref{eq:Leibniz},
    $g:= \ch f$ satisfies $|\De_h^{m+2} g(x)| \le C\, |h|^{m+1}$ for all $x,h \in \R$ and thus $g \in \La_{m+1}(\R)$, by \Cref{thm:Krantz}.
    Indeed, $|\De_h^{m+2} g(x)|$ is bounded by a finite sum of terms which are up to constant factors
    of the form $|\De^i_h\ch(y) \De_h^{j} f(z)|$, where $i+j = m+2$.
    If $j\le m$, then $|\De_h^{j} f(z)| \le C |h|^j$ and $|\De_h^{m+1} f(z)| \le C |h|^m$, by \Cref{cor:bd}.
    Thus $|\De^i_h\ch(y) \De_h^{j} f(z)| \le C |h|^{m+1}$ for $j\le m+1$. For $j=m+2$ this follows from local boundedness of
    $(x,h) \mapsto h\, \de^{m+2}_{\on{eq}} f(x;h)$.
    Since $I$ was arbitrary, we may conclude that $f \in \cZ^{m,1}(\R)$.
\end{proof}

\begin{remark}
  The local boundedness of $(x,h) \mapsto \de^{m}_{\on{eq}} f(x;h)$ is used for the ``local to global''
  argument in (2) $\Rightarrow$ (1). It is possible that it can be dropped from the formulation of (2).
\end{remark}

For the local H\"older classes we have

\begin{theorem} \label{thm:Hchar}
  Let $m \in \N$ and $\al \in (0,1)$.
  Let $f : \R \to \R$ be continuous. The following conditions are equivalent:
  \begin{enumerate}
    \item $f \in \cC^{m,\al}(\R)$.
    \item $(x,h) \mapsto |h|^{1-\al} \de^{m+1}_{\on{eq}} f(x;h)$
    and $(x,h) \mapsto  \de^{m}_{\on{eq}} f(x;h)$ are locally bounded on $\R \times (\R\setminus \{0\})$.
  \end{enumerate}
\end{theorem}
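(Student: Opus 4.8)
The plan is to follow the proof of \Cref{thm:Zchar} almost verbatim, replacing $\La_{m+1}$ by $\La_{m+\al}$, finite differences of order $m+2$ by those of order $m+1$, and the weight $h$ by $|h|^{1-\al}$; the underlying tool is again the finite difference description of the global space, i.e.\ \Cref{thm:Krantz} applied with $n := m+1$, so that $0 < s := m+\al < n$.

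For (1) $\Rightarrow$ (2), fix a bounded interval $I \subseteq \R$. Since $f \in \cC^{m,\al}(\R)$, the restriction $f|_I$ admits an extension $F \in \La_{m+\al}(\R)$ with $\|F\|_{\La_{m+\al}(\R)} \le C\,\|f\|_{\La_{m+\al}(I)}$ for some $C > 0$ (see \cite[Section~14]{Krantz83}). By \Cref{thm:Krantz}, $|\De^{m+1}_h F(x)| \le C\,|h|^{m+\al}$ for all $x, h \in \R$. Because $\de^{m+1}_{\on{eq}} f(x;h) = h^{-(m+1)}\De^{m+1}_h f(x)$ coincides with the corresponding quotient for $F$ whenever $x, x + (m+1)h \in I$, this gives $|h|^{1-\al}|\de^{m+1}_{\on{eq}} f(x;h)| \le C$ there, and since $I$ was arbitrary the map $(x,h) \mapsto |h|^{1-\al}\de^{m+1}_{\on{eq}} f(x;h)$ is locally bounded on $\R \times (\R \setminus \{0\})$. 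That $(x,h) \mapsto \de^{m}_{\on{eq}} f(x;h)$ is locally bounded is trivial for $m = 0$, and for $m \ge 1$ it follows from \Cref{thm:Lchar} since $f \in \cC^{m,\al}(\R) \subseteq \cC^m(\R) \subseteq \cC^{m-1,1}(\R)$ by \eqref{eq:inclusions}.

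For (2) $\Rightarrow$ (1), fix a bounded interval $I$ and a function $\ch \in \cC^\infty(\R,[0,1])$ with compact support that is $1$ on a neighborhood of $I$, and set $g := \ch f$. I would verify that $|\De^{m+1}_h g(x)| \le C\,|h|^{m+\al}$ for all $x, h \in \R$, which by \Cref{thm:Krantz} yields $g \in \La_{m+\al}(\R)$; as $g = f$ near $I$ and $I$ is arbitrary, this gives $f \in \cC^{m,\al}(\R)$. For $|h| \ge 1$ the estimate is immediate because $g$ has compact support, so $|\De^{m+1}_h g(x)| \le 2^{m+1}\|g\|_\infty$. For $|h| < 1$, the product rule \eqref{eq:Leibniz} bounds $|\De^{m+1}_h g(x)|$ by a finite sum of terms $C\,|\De^i_h \ch(y)\,\De^j_h f(z)|$ with $i + j = m+1$ and $y, z$ in a fixed compact set $K \subseteq \R$. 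If $i \ge 1$, then $|\De^i_h \ch(y)| \le C\,|h|^i$ by smoothness of $\ch$, while $j \le m$ forces $|\De^j_h f(z)| \le C\,|h|^j$ by local boundedness of $(x,h) \mapsto \de^j_{\on{eq}} f(x;h)$ (\Cref{cor:bd}); hence this term is $\le C\,|h|^{m+1} \le C\,|h|^{m+\al}$. For $i = 0$, $j = m+1$, the term equals $|\ch(y)|\,|h|^{m+\al}\,\big(|h|^{1-\al}|\de^{m+1}_{\on{eq}} f(z;h)|\big) \le C\,|h|^{m+\al}$ by the first local boundedness hypothesis in (2).

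The main obstacle is precisely the case $i = 0$, i.e.\ the term $\ch(y)\De^{m+1}_h f(z)$: it is the only point where the weighted quotient $|h|^{1-\al}\de^{m+1}_{\on{eq}} f$ is genuinely used, exactly as $h\,\de^{m+2}_{\on{eq}} f$ is used in the proof of \Cref{thm:Zchar}, and one has to make sure its local boundedness really supplies the bound $|\De^{m+1}_h f(z)| \le C\,|h|^{m+\al}$ uniformly as $h \to 0$ with $z$ in a compact set; everything else is routine bookkeeping with \eqref{eq:Leibniz} and \Cref{cor:bd}. A remark analogous to the one after \Cref{thm:Zchar} applies: the second condition in (2) is needed only for the localization step and may well be superfluous.
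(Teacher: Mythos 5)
Your proof is correct and is exactly the argument the paper intends: the published proof of \Cref{thm:Hchar} consists of the single sentence that it ``follows in analogy to \Cref{thm:Zchar} again from \Cref{thm:Krantz}'', and your write-up carries out precisely that analogy (extension of $f|_I$ to $\La_{m+\al}(\R)$ and \Cref{thm:Krantz} with $n=m+1$, $s=m+\al$ for (1)~$\Rightarrow$~(2); cutoff, the product rule \eqref{eq:Leibniz}, and \Cref{cor:bd} for (2)~$\Rightarrow$~(1)). The step you flag as the ``main obstacle'' ($i=0$, $j=m+1$) is handled exactly as in the Zygmund proof by the weighted hypothesis, and your case analysis is in fact slightly simpler there since, unlike in \Cref{thm:Zchar}, no intermediate order $j=m+1<n$ needs a separate estimate.
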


\begin{proof}
   This follows in analogy to \Cref{thm:Zchar} again from \Cref{thm:Krantz}.
\end{proof}

\section{Testing on smooth curves} \label{sec:proofs}

This section is devoted to the proof of \Cref{thm:main1}.
A few adjustments are all that is needed to also prove \Cref{thm:main2} in one go.
These adjustments are indicated in \Cref{sec:proofmain2}.

\subsection{Composition with smooth curves preserves the class}

\begin{proposition} \label{prop:composition}
  Let $U \subseteq \R^d$ be an open subset and $c \in \cC^{\infty}(\R,U)$. Then:
  \begin{enumerate}
    \item For each $m \in \N$ and $f \in \cZ^{m,1}(U)$ we have $f \o c\in \cZ^{m,1}(\R)$.
    \item For each $m \in \N$, $\al \in (0,1]$, and $f \in \cC^{m,\al}(U)$ we have $f \o c \in \cC^{m,\al}(\R)$.
  \end{enumerate}
\end{proposition}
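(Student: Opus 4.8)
The plan is to reduce everything to one variable by means of the chain rule for finite differences \eqref{chainrule} and then to induct on $m$, a product--rule lemma handling the bookkeeping. I would proceed in three steps.

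\emph{Base case $m=0$.} Read \eqref{chainrule} with a \emph{vector} increment: for $v\in\R^d$ set $\De^1_v g(y):=g(y+v)-g(y)$ and $\De^2_v g(y):=g(y+2v)-2g(y+v)+g(y)$; expanding both sides shows that \eqref{chainrule} continues to hold verbatim with the curve $c$ as inner map. Fix a compact interval $J\subseteq\R$. Since $c(J)$ is a compact subset of the open set $U$ and $\De^1_h c(x)\to 0$ uniformly for $x\in J$ as $h\to 0$ (because $c\in\cC^1$), there are $h_0>0$ and a compact $K\subseteq U$, with $c(J)\subseteq K$, such that $c(x+2h)$, $2c(x+h)-c(x)$ and $c(x+h)$ all lie in $K$ whenever $x\in J$ and $0<|h|<h_0$. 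Using $2c(x+h)-c(x)+\De^2_h c(x)=c(x+2h)$, the second line of \eqref{chainrule} reads
\[
  \De^2_h(f\o c)(x)=\big(f(c(x+2h))-f(2c(x+h)-c(x))\big)+\De^2_{\De^1_h c(x)}f(c(x)).
\]
The two arguments in the bracket differ by $\De^2_h c(x)$, which is $O(|h|^2)$ since $c\in\cC^2$; as $\cZ^{0,1}(U)\subseteq\cC^{0,1/2}(U)$ by \eqref{eq:inclusions}, the bracket is $O(|h|)$ on $K$. The remaining term equals $f(y+v)-2f(y)+f(y-v)$ with $y:=c(x)+\De^1_h c(x)=c(x+h)\in K$ and $v:=\De^1_h c(x)=O(|h|)$, hence is $O(|h|)$ by the defining estimate for $\cZ^{0,1}(U)$ applied on $K$. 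Therefore $(x,h)\mapsto\De^2_h(f\o c)(x)/h$ is locally bounded, i.e.\ $f\o c\in\cZ^{0,1}(\R)$. For part (2) with $m=0$ one simply has $\De^1_h(f\o c)(x)=f(c(x+h))-f(c(x))=O(|h|^\al)$ since $f\in\cC^{0,\al}(U)$.

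\emph{A product lemma.} I would then establish: if $g\in\cZ^{m,1}(\R)$ and $\ph\in\cC^\infty(\R)$, then $g\ph\in\cZ^{m,1}(\R)$ (and likewise with $\cC^{m,\al}$ in place of $\cZ^{m,1}$). By \eqref{eq:Leibniz}, $(g\ph)^{(m)}=\sum_{j=0}^m\binom mj g^{(j)}\ph^{(m-j)}$, so it is enough to see that each summand lies in $\cZ^{0,1}(\R)$. For $j<m$ we have $g^{(j)}\in\cC^{m-j}(\R)\subseteq\cC^{0,1}(\R)\subseteq\cZ^{0,1}(\R)$ by \eqref{eq:inclusions}, while $g^{(m)}\in\cZ^{0,1}(\R)$. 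It thus suffices to note that $\cZ^{0,1}(\R)$ is stable under multiplication by $\cC^\infty$ functions: by \eqref{eq:Leibniz} with $n=2$ and order $2$,
\[
  \De^2_h(u\ph)(x)=\De^2_h u(x)\,\ph(x+2h)+2\,\De^1_h u(x)\,\De^1_h\ph(x+h)+u(x)\,\De^2_h\ph(x),
\]
and the three terms are locally $O(|h|)$, $O(|h|^{3/2})$ (using $u\in\cZ^{0,1}(\R)\subseteq\cC^{0,1/2}(\R)$), and $O(|h|^2)$ respectively; so $u\ph\in\cZ^{0,1}(\R)$. The $\cC^{m,\al}$ statement is proved the same way, with simpler estimates.

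\emph{Induction step.} Let $m\ge1$ and $f\in\cZ^{m,1}(U)$. Then $f\o c\in\cC^m(\R)$ and $(f\o c)'=\sum_{i=1}^d(\p_i f\o c)\,c_i'$. Each $\p_i f\in\cZ^{m-1,1}(U)$, so $\p_i f\o c\in\cZ^{m-1,1}(\R)$ by the inductive hypothesis, and the product lemma gives $(\p_i f\o c)\,c_i'\in\cZ^{m-1,1}(\R)$; summing over $i$ yields $(f\o c)'\in\cZ^{m-1,1}(\R)$, which is precisely the statement that $(f\o c)^{(m)}\in\cZ^{0,1}(\R)$, i.e.\ $f\o c\in\cZ^{m,1}(\R)$. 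Part (2) runs in exact parallel, with $\De^1_h(f\o c)(x)=f(c(x+h))-f(c(x))$ for its base case and the $\cC^{m,\al}$ product lemma in the inductive step. I expect the only genuinely delicate point to be the one flagged in the base case: one must select a single compact $K\subseteq U$ (and a single $h_0$) in advance so that all base points occurring on the right of \eqref{chainrule} remain in $K$ for $x\in J$, $0<|h|<h_0$ --- this is exactly what makes the local H\"older/Zygmund bounds for $f$ on $U$ applicable, and is the only place where openness of $U$ enters.
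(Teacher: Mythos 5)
Your proof is correct. Note that the paper does not actually prove \Cref{prop:composition} in place: it cites sharper external results and remarks that for $m\ge 1$ a proof of (1) ``can be assembled'' from the superposition arguments of \Cref{sec:superposition} (cf.\ \Cref{rem:acts}). What you have written is essentially that assembled proof, made self-contained: your base-case estimate is exactly the mechanism of ``Fact 1'' there --- split $\De^2_h(f\o c)$ via the chain rule \eqref{chainrule}, control the first term by the embedding $\cZ^{0,1}\subseteq\cC^{0,\al}$ from \eqref{eq:inclusions} together with $\De^2_h c=O(|h|^2)$ (you take $\al=\tfrac12$ where the paper's Section 5 uses $\al=(1+\be)^{-1}$ with $\De^2_v g=O(|v|^{1+\be})$; for a smooth inner map either choice works), and control the second term by the defining Zygmund estimate at the shifted center $c(x+h)$. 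Your organization --- induction on $m$ via $(f\o c)'=\sum_i(\p_i f\o c)\,c_i'$ plus a $\cC^\infty$-multiplier lemma proved with \eqref{eq:Leibniz} --- replaces the Fa\`a di Bruno bookkeeping of \Cref{sec:superposition} and is arguably cleaner for this local statement. Two points you could make fully explicit, though neither is a gap: the vector-increment version of \eqref{chainrule} is just the telescoping identity obtained by adding and subtracting $f(2c(x+h)-c(x))$, so it indeed holds verbatim; and for $|h|\ge h_0$ the quotient $\De^2_h(f\o c)(x)/h$ is trivially bounded by $4\sup|f\o c|/h_0$ on compacts, so restricting to $|h|<h_0$ when choosing $K$ costs nothing.
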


The proposition is a consequence of sharper versions: e.g.\ \cite{Norton94}, \cite{Bourdaud:2002tb}, and \cite{LlaveObaya99}.
For $m \ge 1$ a proof of (1) can be assembled from the arguments in \Cref{sec:superposition}; see \Cref{rem:acts}.

\subsection{Curve lemma}
We will repeatedly use the \emph{general curve lemma} \cite[12.2]{KM97} which we restate here in a simple form
for the convenience of the reader.

\begin{lemma}\label{curvelemma}
Let $c_n \in \cC^\infty(\R,\R^d)$ be a sequence of $\cC^\infty$-curves that converges fast to $0$, i.e.,
for each $k \in \N$ the sequence $(n^k c_n)_n$ is bounded in $\cC^\infty(\R,\R^d)$.
Let $s_n \ge 0$ be reals with $\sum_n s_n <\infty$.
Then there exist a curve $c \in \cC^\infty(\R,\R^d)$ and a convergent sequence $t_n$ of reals such that
$c(t+t_n) = c_n(t)$ for $|t|\le s_n$ and all $n$.
\end{lemma}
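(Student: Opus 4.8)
The plan is to glue the $c_n$ into a single curve by placing suitably translated copies of them on pairwise disjoint intervals that accumulate at one point, interpolating smoothly across the gaps, and extending by $0$ past the accumulation point. Normalizing the limit to be $0$, I first fix gap widths $g_n>0$ that are summable but decay only polynomially, say $g_n := n^{-2}$, and choose a strictly decreasing sequence $t_n \to 0$ with $t_n - t_{n+1} = s_n + s_{n+1} + g_n$; this is possible because $\sum_n (2 s_n + g_n) < \infty$. Then the intervals $J_n := [t_n - s_n,\, t_n + s_n]$ are pairwise disjoint and consecutive ones are separated by a gap $G_n$ of width exactly $g_n$. On $J_n$ I set $c(t) := c_n(t - t_n)$, so that $c(t + t_n) = c_n(t)$ for $|t| \le s_n$ as required; for $t \le 0$ I set $c(t) := 0$; and to the right of $J_1$ I interpolate down to $0$ across one fixed auxiliary gap.

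Across each gap $G_n$ (with $J_{n+1}$ on the left and $J_n$ on the right) I define $c(t) := \phi_n(t)\, c_n(t - t_n) + (1-\phi_n(t))\, c_{n+1}(t - t_{n+1})$, where $\phi_n$ is a smooth transition that is $\equiv 1$ in a neighborhood of the $J_n$-endpoint and $\equiv 0$ in a neighborhood of the $J_{n+1}$-endpoint, scaled to the gap so that $|\phi_n^{(i)}| \le B_i\, g_n^{-i}$ for constants $B_i$ depending only on a fixed transition profile. Because each $\phi_n$ is locally constant near the two endpoints of its gap, $c$ coincides with a single translate $c_m(\,\cdot\, - t_m)$ in a neighborhood of every junction $t_m \pm s_m$; hence $c$ is $\cC^\infty$ on $\R \setminus \{0\}$, being a finite sum of products of smooth functions on each piece.

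It remains to prove smoothness at $0$, and here the fast convergence does the work. Since $(n^k c_n)_n$ is bounded in $\cC^\infty(\R,\R^d)$ and the arguments $t - t_n$, $t - t_{n+1}$ occurring above all lie in a fixed bounded set $[-N,N]$ (as $s_n, g_n \to 0$), there are constants $C_{j,k}$ with $\sup_{|\tau|\le N} |c_n^{(j)}(\tau)| \le C_{j,k}\, n^{-k}$ for every $j,k$. On $J_n$ this gives $|c^{(j)}| \le C_{j,k}\, n^{-k}$ directly. On $G_n$ the Leibniz rule expands $c^{(j)}$ into terms $\binom{j}{i}\phi_n^{(i)}\, c_n^{(j-i)}(\,\cdot\, - t_n)$ together with the analogous $c_{n+1}$ terms, each bounded by $B_i\, g_n^{-i}\, C_{j,k}\, n^{-k} \le C'_j\, n^{2j}\, n^{-k}$; choosing $k = l + 2j$ yields $|c^{(j)}(t)| \le C\, n^{-l}$ on $J_n \cup G_n$ for any prescribed $l$. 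As $t \to 0^+$ forces $n \to \infty$, every derivative $c^{(j)}(t)$ tends to $0$; together with $c \equiv 0$ for $t \le 0$, the elementary criterion that a function which is $\cC^\infty$ off a point, continuous there, and has all derivatives converging to $0$ at that point extends smoothly with vanishing derivatives (proved by the mean value theorem) shows $c \in \cC^\infty(\R,\R^d)$ with $c^{(j)}(0) = 0$. By construction the sequence $t_n \to 0$ is convergent and $c(t + t_n) = c_n(t)$ for $|t| \le s_n$, which is what was claimed.

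I expect the balance in the gaps to be the only genuine obstacle: one must pick the widths $g_n$ small enough to be summable (so the intervals accumulate at $0$) yet large enough that the blow-up $g_n^{-i}$ of the transition derivatives is absorbed by the decay of $c_n$ and all of its derivatives. That both constraints can be met at once is exactly what the hypothesis of fast convergence—boundedness of $(n^k c_n)_n$ for \emph{every} $k$—guarantees, since it permits trading any fixed power $n^{2j}$ coming from the gaps against an arbitrarily high power $n^{-k}$ coming from the curves.
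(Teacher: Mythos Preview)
The paper does not actually prove this lemma; it restates the general curve lemma from \cite[12.2]{KM97} for the reader's convenience and cites that reference for the proof. Your construction is correct and is essentially the standard argument found there: place translated copies of the $c_n$ on disjoint intervals accumulating at a point, bridge the gaps with rescaled transition functions, and use fast convergence to absorb the polynomial blow-up $g_n^{-i}$ of the transition derivatives so that all derivatives of $c$ tend to $0$ at the accumulation point. The only cosmetic difference is that in \cite{KM97} the interpolation is typically arranged via a single bump function $h_n$ supported in an interval slightly larger than $[-s_n,s_n]$ (so one writes $c = \sum_n h_n(\cdot - t_n)\, c_n(\cdot - t_n)$ as a locally finite sum), whereas you phrase the same thing as a two-sided blend $\phi_n c_n + (1-\phi_n) c_{n+1}$ on each gap; the estimates are identical.
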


\subsection{Degree zero}
The proof of \Cref{thm:main1} is based on induction on $m$. The following lemma treats the base case $m=0$.

\begin{lemma} \label{lem:C0om}
  Let $U \subseteq \R^d$ be an open set and $f : U \to \R$ a function. The following conditions are equivalent:
  \begin{enumerate}
    \item $f \o c \in \cZ^{0,1}(\R)$ for all $c  \in \cC^{\infty}(\R,U)$.
    \item $f \in \cZ^{0,1}(U)$.
  \end{enumerate}
\end{lemma}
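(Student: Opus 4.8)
The implication (2) $\Rightarrow$ (1) is immediate from Proposition \ref{prop:composition}(1) with $m=0$, so the content of the lemma is the converse. The plan is to prove the contrapositive: assuming $f \notin \cZ^{0,1}(U)$, I will construct a smooth curve $c \in \cC^\infty(\R,U)$ with $f \o c \notin \cZ^{0,1}(\R)$. By definition, $f \notin \cZ^{0,1}(U)$ means that $f$ is either not continuous, or there is a compact $K \subseteq U$ such that the second-difference quotient $|f(x+h) - 2f(x) + f(x-h)|/|h|$ is unbounded over $x, x\pm h \in K$. The discontinuous case can be handled separately and easily: a discontinuity can be detected along a suitable smooth curve, and $f \o c$ discontinuous forces $f \o c \notin \cZ^{0,1}(\R)$ since Zygmund functions are continuous. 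So the main case is the unboundedness of the Zygmund quotient.

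In that case I select a sequence of triples $(x_n, h_n)$ with $x_n, x_n \pm h_n \in K$ and $|f(x_n+h_n) - 2f(x_n) + f(x_n - h_n)|/|h_n| \to \infty$. Since $K$ is compact I may pass to a subsequence so that $x_n \to x_\infty \in K \subseteq U$ and $h_n \to 0$ (the quotient cannot blow up with $|h_n|$ bounded below while $f$ is continuous, which we may assume in this case). After translating we may take $x_\infty$ to be a fixed point of $U$; on a small ball around it $f$ is bounded. The idea now is to feed these configurations into the general curve lemma (Lemma \ref{curvelemma}). For each $n$ I want a short smooth curve segment that visits the three points $x_n - h_n$, $x_n$, $x_n + h_n$ at equidistant parameter values, say at parameters $-\si_n, 0, \si_n$ for an appropriate small $\si_n$, and which, after rescaling by $x_n - x_\infty$, converges fast to $0$. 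Concretely, I would first thin the sequence (replacing it by a rapidly sparsifying subsequence) so that the segments $c_n(t) := $ (a fixed smooth curve through $x_n-h_n, x_n, x_n+h_n$, reparametrized to a short interval and recentered so that $c_n - x_\infty$ is supported near $0$) satisfy: $(N^k c_n)_N$ bounded in $\cC^\infty$ for a suitable sparse index set, and $\sum \si_n < \infty$. Then Lemma \ref{curvelemma} glues them into a single $c \in \cC^\infty(\R,U)$ with $c(t + t_n) = c_n(t)$ for $|t| \le \si_n$, hence $c$ passes through $x_n - h_n$, $x_n$, $x_n+h_n$ at parameters $t_n - \si_n, t_n, t_n + \si_n$. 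Along $c$ the Zygmund quotient of $f \o c$ at the point $t_n$ with increment $\si_n$ equals $|f(x_n+h_n) - 2f(x_n) + f(x_n-h_n)|/\si_n$, and by choosing $\si_n \le |h_n|$ (or even $\si_n$ decaying fast) this is $\ge |h_n|/\si_n \cdot |f(x_n+h_n)-2f(x_n)+f(x_n-h_n)|/|h_n| \to \infty$. Hence $f\o c \notin \cZ^{0,1}(\R)$, as desired.

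The main obstacle is the simultaneous bookkeeping required to apply the curve lemma: one must choose the reparametrization scales $\si_n$ and the subsequence of $(x_n,h_n)$ so that (a) the rescaled segments converge fast to $0$ in $\cC^\infty$ (which needs $|h_n|$ and all the geometric data of the segments to decay fast relative to the index), (b) $\sum_n \si_n < \infty$, and (c) the blow-up of the Zygmund quotient survives the reparametrization, i.e. $|h_n|/\si_n$ does not decay faster than the quotient grows — this last point is why one wants $\si_n$ comparable to or smaller than $|h_n|$, and is compatible with (a)–(b) after passing to a sparse enough subsequence. A secondary technical point is building the individual smooth segments through three prescribed collinear (well, affinely placed) points with controlled derivatives; since $x_n-h_n, x_n, x_n+h_n$ lie on a line, one can simply take an affine curve $t \mapsto x_n + (t/\si_n)h_n$ multiplied by a fixed smooth cutoff to make it constant outside a compact set, which keeps the construction elementary. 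The discontinuity case and the reduction "$h_n \to 0$" both use only that $\cZ^{0,1}$-functions are continuous and locally bounded.
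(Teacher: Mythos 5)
Your overall strategy is exactly the paper's: prove the contrapositive, reduce to a sequence $(x_n,h_n)$ in a compact $K$ with unbounded Zygmund quotients $q_n$, thread affine segments through $x_n-h_n,x_n,x_n+h_n$ with the general curve lemma, and read off the blow-up of the second difference of $f\o c$ at the gluing parameters $t_n$. However, there is a genuine quantitative error in the one step where the whole argument is decided. You insist on $\si_n\le |h_n|$ so that the reparametrized quotient $q_n\cdot|h_n|/\si_n\ge q_n$ still blows up, and you claim this is compatible with the hypotheses of Lemma \ref{curvelemma} after sparsifying the sequence. It is not: the segment $c_n(t)=x_n+(t/\si_n)h_n$ has derivative of norm $|h_n|/\si_n$, and fast convergence of $c_n-x_\infty$ to $0$ requires $n^k|h_n|/\si_n$ to be bounded for every $k$, i.e.\ $|h_n|/\si_n$ must be rapidly decreasing. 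Passing to a sparser subsequence does not help, because after relabelling the condition is imposed on the new index and still forces the speeds to decay. So $\si_n\le|h_n|$ makes the curve lemma inapplicable, and with the admissible choice $\si_n\gg|h_n|$ your lower bound $q_n\cdot|h_n|/\si_n$ could tend to $0$ for a generic unbounded sequence $q_n$.

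The repair is the one the paper uses, and it is the missing idea in your write-up: since $q_n$ is unbounded you may first extract a subsequence with $q_n\ge n2^n$, then take speed exactly $2^{-n}$ (so $\si_n=2^n|h_n|\le 2^{-n}$ after also arranging $|h_n|\le 4^{-n}$, which keeps $\sum_n\si_n<\infty$ and makes $c_n-x_\infty$ converge fast to $0$). The transferred quotient is then $q_n/2^n\ge n\to\infty$. In other words, one must \emph{slow down} the segments to satisfy the curve lemma and compensate by forcing $q_n$ to grow faster than the slowdown factor, rather than speeding them up. The rest of your outline (the reduction to $h_n\to 0$ via local boundedness, the separate treatment of discontinuity, and the identification of the second difference of $f\o c$ at $t_n$ with increment $\si_n$) is correct and matches the paper.
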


\begin{proof}
  That (2) implies (1) follows from \Cref{prop:composition}.
  Let us assume that (1) holds
  and suppose for contradiction that $f \not\in \cZ^{0,1}(U)$.
  Note that (1) implies that $f$ is continuous (cf.\ \cite[Theorem 4.11]{KM97}).
  Thus there is a compact set $K \subseteq U$  and points $x_n$, $x_n \pm h_n$ in $K$ such that
  \[
    q_n := \frac{|f(x_n+h_n) - 2 f(x_n) + f(x_n-h_n)|}{|h_n|}
  \]
  is unbounded.
  Passing to subsequences, we may assume that $|x_n-x| \le 4^{-n}$, $|h_n| \le 4^{-n}$, and $q_n \ge n 2^n$.
  Consider the curves $c_n(t) := x_n + t \frac{h_n}{2^n |h_n|}$ and note that $c_n -x$ converges fast to $0$.
  By \Cref{curvelemma}, there is a $\cC^\infty$-curve $c : \R \to U$ and a convergent sequence of reals $t_n$ such that
  $c(t+t_n) = c_n(t)$ for all $|t| \le s_n := 2^n |h_n|$.
  Then
  \begin{align*}
    \frac{|(f\o c)(t_n + s_n) - 2(f\o c)(t_n) + (f\o c)(t_n - s_n)|}{s_n}
    = \frac{q_n}{2^n} \ge n
  \end{align*}
  contradicting (1).
\end{proof}

\begin{lemma} \label{lem:C0omHL}
  Let $\al \in (0,1]$, $U \subseteq \R^d$ an open set,
  and $f : U \to \R$ a function. The following conditions are equivalent:
  \begin{enumerate}
    \item $f \o c \in \cC^{0,\al}(\R)$ for all $c  \in \cC^{\infty}(\R,U)$.
    \item $f \in \cC^{0,\al}(U)$.
  \end{enumerate}
\end{lemma}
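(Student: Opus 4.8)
The plan is to mimic the proof of \Cref{lem:C0om} almost verbatim, replacing the Zygmund second-difference quotient by the first-difference H\"older quotient, and to separate the cases $\al \in (0,1)$ and $\al = 1$ only insofar as the latter is the Lipschitz case where $|f(x+h)-f(x)|/|h|$ is the relevant quantity. As before, the implication (2) $\Rightarrow$ (1) is immediate from \Cref{prop:composition}(2). For the converse, suppose (1) holds but $f \notin \cC^{0,\al}(U)$. First I would note that (1) forces $f$ to be continuous (again by \cite[Theorem 4.11]{KM97}, since $f \o c$ is continuous for every smooth $c$, in particular every affine curve). Hence there is a compact $K \subseteq U$ and points $x_n$, $x_n + h_n$ in $K$ with
\[
  q_n := \frac{|f(x_n+h_n) - f(x_n)|}{|h_n|^\al}
\]
unbounded; passing to a subsequence we may assume $|x_n - x| \le 4^{-n}$ for some limit point $x \in K$, $|h_n| \le 4^{-n}$, and $q_n \ge n\, 2^{n\al}$.

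Next I would set up the curves. Put $c_n(t) := x_n + t\,\dfrac{h_n}{2^n |h_n|}$; then $c_n - x$ converges fast to $0$ (each derivative of order $\ge 1$ is $O(2^{-n})$ uniformly, and $c_n(0) - x = x_n - x = O(4^{-n})$, so $(n^k(c_n - x))_n$ is bounded in $\cC^\infty$ for every $k$). Choose $s_n := 2^n |h_n|$, which is summable since $s_n \le 2^n 4^{-n} = 2^{-n}$. By \Cref{curvelemma} there is a $\cC^\infty$-curve $c : \R \to U$ and a convergent sequence $t_n$ with $c(t + t_n) = c_n(t)$ for $|t| \le s_n$. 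In particular $c(t_n) = c_n(0) = x_n$ and $c(t_n + s_n) = c_n(s_n) = x_n + h_n$. Then
\[
  \frac{|(f \o c)(t_n + s_n) - (f \o c)(t_n)|}{s_n^\al}
  = \frac{|f(x_n + h_n) - f(x_n)|}{(2^n |h_n|)^\al}
  = \frac{q_n}{2^{n\al}} \ge n,
\]
so $f \o c$ is not of class $\cC^{0,\al}$ near the accumulation point of $(t_n)$ (note $t_n + s_n \to \lim t_n$ as well since $s_n \to 0$), contradicting (1). This handles every $\al \in (0,1]$ uniformly, the Lipschitz case $\al = 1$ being included without change.

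There is essentially no serious obstacle here: the only points needing a little care are (a) verifying that $c_n - x$ genuinely converges fast to $0$ in $\cC^\infty(\R,\R^d)$ — which holds because the curves are affine with slope of norm $2^{-n}$ and base point within $4^{-n}$ of $x$ — and (b) checking that the pair $(t_n, t_n + s_n)$ stays in a fixed compact neighbourhood of $\lim_n t_n$ so that failure of the H\"older estimate at these points really contradicts $f \o c \in \cC^{0,\al}(\R)$, which asks for a uniform bound only on compacta. Both are routine, exactly as in \Cref{lem:C0om}. One could alternatively derive the lemma from Faure's theorem \cite{Faure89} or from \Cref{thm:main2} with $m = 0$, but the direct argument above is shorter and is precisely the base case needed for the induction proving \Cref{thm:main2}.
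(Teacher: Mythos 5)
Your proposal is correct and is exactly the argument the paper intends: the paper's proof of this lemma literally says to repeat the proof of \Cref{lem:C0om} with $q_n := |f(x_n+h_n)-f(x_n)|/|h_n|^\al$, which is what you carry out (with the harmless extra remark on continuity, which for $\cC^{0,\al}$ is automatic from boundedness of the quotient). The scaling bookkeeping $q_n \ge n\,2^{n\al}$ and $s_n=2^n|h_n|$ is right, so nothing further is needed.
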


\begin{proof}
  Repeat the proof of \Cref{lem:C0om} with $q_n := |f(x_n+h_n)-f(x_n)|/|h_n|^\al$;
  cf.\ \cite{Faure91}, \cite[Lemma 12.7]{KM97}, or \cite{KMR}.
\end{proof}

\subsection{Proof of \Cref{thm:main1}}
The key step is the following proposition.

\begin{proposition} \label{lem:inductionstep}
  Let $m \in \N_{\ge 1}$.
  Let $f : \R^2 \to \R$ be such that $f \o c \in \cZ^{m,1}(\R)$ for all $c \in \cC^\infty(\R,\R^2)$.
  Then $\p_2f(\cdot,0) \in \cZ^{m-1,1}(\R)$.
\end{proposition}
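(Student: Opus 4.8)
The plan is to run the ``Lipschitz-case'' scheme of \cite[Section~4.3]{FK88} and \cite[Section~12]{KM97}, fed with the difference-quotient description of local Zygmund classes from \Cref{thm:Zchar}.

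First, some reductions. Applying \Cref{thm:main2} with any fixed $\al\in(0,1)$ to the hypothesis shows $f\in\cC^{m,\al}(\R^2)\subseteq\cC^m(\R^2)$; in particular $g:=\p_2 f(\cdot,0)$ lies in $\cC^{m-1}(\R)$, and $\p_1^{m-\ell}\p_2^\ell f$ is bounded and $\al$-H\"older on compacts for every $\ell\in\{0,\dots,m\}$. By \Cref{thm:Zchar} applied to $g$ with degree $m-1$, proving $g\in\cZ^{m-1,1}(\R)$ amounts to showing that $(x,h)\mapsto h\,\de^{m+1}_{\on{eq}}g(x;h)=h^{-m}\De^{m+1}_h g(x)$ and $(x,h)\mapsto\de^{m-1}_{\on{eq}}g(x;h)$ are locally bounded on $\R\times(\R\setminus\{0\})$. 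The latter is automatic since $g\in\cC^{m-1}(\R)$ (the equidistant divided difference of order $m-1$ equals $g^{(m-1)}$ at an intermediate point). Finally, since $f\in\cC^m(\R^2)$, differentiating under the finite difference gives, uniformly for $(x,h)$ in compact sets,
\[
  \De^{m+1}_h g(x)=\sum_{i=0}^{m+1}(-1)^{m+1-i}\binom{m+1}{i}\p_2 f(x+ih,0)
  =\lim_{s\to0}\frac{1}{s}\,\De^1_{(0,s)}\De^{m+1}_{(h,0)}f(x,0),
\]
where $\De$ with a vector increment has the obvious meaning.

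Now suppose, for contradiction, that $(x,h)\mapsto h^{-m}\De^{m+1}_h g(x)$ is not locally bounded. As $g$ is continuous, this forces a sequence $z_n$ in a bounded interval, reals $h_n\to0$ with $h_n\ne0$, and $N_n:=|h_n|^{-m}|\De^{m+1}_{h_n}g(z_n)|\to\infty$. Passing to a subsequence we may assume $z_n$ converges and $h_n\to0$ arbitrarily fast; using the displayed limit we then pick reals $0<s_n\to0$, decreasing fast with $\sum_n s_n<\infty$, such that
\[
  \frac{1}{s_n|h_n|^m}\,\bigl|\De^1_{(0,s_n)}\De^{m+1}_{(h_n,0)}f(x_n,0)\bigr|\ \ge\ \tfrac12 N_n,\qquad x_n:=z_n-h_n .
\]
The goal is to produce a single curve $c\in\cC^\infty(\R,\R^2)$ and reals $t_n\to t_\infty$, $\tau_n\to0$ with $\tau_n^{-(m+1)}\,|\De^{m+2}_{\tau_n}(f\o c)(t_n)|\to\infty$; by \Cref{thm:Zchar} (degree $m$) this contradicts $f\o c\in\cZ^{m,1}(\R)$. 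On each of countably many short disjoint parameter intervals I prescribe $c$ to be a polynomial curve whose $m+3$ nodes $c(t_n+j\tau_n)$ are placed over the horizontal grid $x_n,x_n+h_n,\dots$ on the two levels $\{x_2=0\}$ and $\{x_2=\mu_n\}$, with $\mu_n$ an auxiliary vertical scale and $\tau_n$ tied to $h_n,\mu_n$. Expanding $f$ in the second variable and using the chain and Leibniz rules \eqref{eq:Leibniz}--\eqref{chainrule} for finite differences, one collects the contributions to $\De^{m+2}_{\tau_n}(f\o c)(t_n)$ by the order $\ell$ of the $\p_2$-derivative involved: the $\ell=1$ term is a fixed nonzero multiple of $\mu_n\,\De^{m+1}_{h_n}g(z_n)$ (this is where the shift $x_n=z_n-h_n$ and the choice of $s_n$ are used), the $\ell=0$ term is $O(|h_n|^{m+1})$ because $f(\cdot,0)\in\cZ^{m,1}(\R)$ (apply the hypothesis to $c(t)=(t,0)$), the terms $1<\ell\le m$ are controlled by the boundedness and H\"older continuity of $\p_1^{m-\ell}\p_2^\ell f$, and the Taylor remainder by $f\in\cC^m$; provided $\mu_n$ is sandwiched suitably (roughly between $|h_n|/N_n$ and a small power of $N_n|h_n|^2$) all of these are $o$ of the $\ell=1$ term. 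Dividing by $\tau_n^{m+1}$ then gives the blow-up, once the pieces have been glued into one genuinely smooth $c$ by the general curve lemma \Cref{curvelemma}.

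The hard part is exactly this curve construction: the polynomial pieces and the scales $\mu_n,\tau_n$ must be chosen so that, at the same time, the translated pieces converge fast to $0$ (i.e.\ all jets decay faster than any power of $1/n$, so that \Cref{curvelemma} applies), the order-$(m+2)$ equidistant difference of $f\o c$ at the nodes still reproduces a definite multiple of the bad quantity $\mu_n\De^{m+1}_{h_n}g(z_n)$, and every remaining contribution is negligible after the $\tau_n^{m+1}$ normalization. This balancing — the Zygmund analogue of the estimates in \cite[Section~4.3]{FK88} and \cite[Section~12]{KM97} — is where the real work lies; the rest is bookkeeping. A few modifications (replacing \Cref{thm:Zchar} by \Cref{thm:Hchar} and the quantities $q_n$ accordingly) then give the corresponding statement needed for \Cref{thm:main2}.
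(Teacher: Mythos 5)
Your reductions are sound and match the paper's: continuity of $g:=\p_2f(\cdot,0)$ and local boundedness of $\de^{m-1}_{\on{eq}}g$ via \Cref{thm:main2}, and the reformulation of the remaining claim as local boundedness of $h\,\de^{m+1}_{\on{eq}}g(x;h)$ via \Cref{thm:Zchar}; the contradiction-plus-curve-lemma strategy is also the right one. But the core step is only sketched, and the sketch contains a quantitative obstruction that cannot be removed in the form you propose. You want to Taylor-expand $f$ in the second variable about the level $x_2=0$ and control the terms of order $2\le\ell\le m$ and the remainder by the two-dimensional $\cC^{m,\al}$-regularity of $f$. That remainder is only $O(\mu_n^{m+\al})$ with $\al<1$, so to dominate it by the main term $\sim\mu_n|h_n|^mN_n$ you need $\mu_n^{m-1+\al}\ll|h_n|^mN_n$, while the $\ell=0$ term $O(|h_n|^{m+1})$ forces $\mu_n\gg|h_n|/N_n$. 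These two constraints are simultaneously satisfiable only if $|h_n|^{1-\al}N_n^{m+\al}\to\infty$, which you cannot guarantee: the contradiction hypothesis hands you an arbitrary bad sequence, and $h_n$ may tend to $0$ arbitrarily fast relative to the growth of $N_n$ (say $N_n=n$, $h_n=e^{-2^n}$), in which case your ``sandwich'' for $\mu_n$ is empty and the error terms swamp the main term. In addition, the two-level zigzag placement of the $m+3$ nodes leaves unverified both the combinatorial identity that is supposed to make the $\ell=1$ contribution a fixed nonzero multiple of $\De^{m+1}_{h_n}g(z_n)$ (the identity $i\binom{m+2}{i}=(m+2)\binom{m+1}{i-1}$ used in the paper requires the vertical coordinate of the $i$-th node to be the ramp $ih_n$, not a two-valued function of $i$), and the fast convergence to $0$, demanded by \Cref{curvelemma}, of degree-$(m+2)$ polynomial pieces whose high derivatives scale like $\mu_n/\tau_n^{m+2}$.

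The missing idea is the content of \Cref{lem:firstder} and \Cref{lem:error}: from the one-dimensional Zygmund Taylor formula (\Cref{lem:Taylor}) --- available because $h\mapsto f(x,h)$ is $\cZ^{m,1}$ with local uniformity in $x$, by the hypothesis together with the curve lemma --- one solves a Vandermonde system to produce an explicit combination $\mathbf A_m(f)(x,h)$ of the values $f(x,jh)$ and $f(x,2jh)$ satisfying $|h\,\p_2f(x,0)-\mathbf A_m(f)(x,h)|\le C|h|^{m+1}$. The exponent $m+1$ (rather than $m+\al$) is exactly what makes the error term bounded after the $(2^nh_n)^{m+1}$ normalization, with no auxiliary vertical scale $\mu_n$ left to tune. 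The curve can then be taken affine, $c_n(t)=(x_n-h_n+2^{-n}t,\,2^{-n}t)$, so \Cref{curvelemma} applies trivially; $\mathbf A_m(f)\o c$ lies in $\cZ^{m,1}(\R)$ because it is a finite linear combination of composites of $f$ with smooth curves; and the telescoping identity converts the order-$(m+2)$ equidistant difference of $hg(x)$ along the ramp into $(m+2)\,h_n\,\de^{m+1}_{\on{eq}}g(x_n;h_n)$. As written, your proposal is a plan whose decisive estimates are both unexecuted and, in the stated form, unobtainable.
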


Let us now use \Cref{lem:inductionstep} to complete the proof of \Cref{thm:main1}. The proof of \Cref{lem:inductionstep} will be given in \Cref{sec:proofkey}.

\begin{proposition} \label{prop:key4}
  Let $m \in \N_{\ge 1}$. Let $U \subseteq \R^d$ be an open set.
  Let $f : U \to \R$ be a function such that $f \o c \in \cZ^{m,1}(\R)$ for all $c \in \cC^\infty(\R,U)$.
  Then $d_v f(x) := \p_t|_{t=0} f(x+tv)$ exists for all $(x,v) \in U \times \R^d$ and defines a mapping $df : U \times \R^d \to \R$ such that
    $df \o (x,v) \in  \cZ^{m-1,1}(\R)$ for each $(x,v) \in \cC^\infty(\R,U \times \R^d)$.
\end{proposition}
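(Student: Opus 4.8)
The plan is to derive \Cref{prop:key4} from \Cref{lem:inductionstep} by a reduction to the two-variable case. First I would fix $(x,v) \in U \times \R^d$ and a scalar variable: the set of $t$ for which $x + tv \in U$ is open in $\R$, so locally around any parameter value we can look at the line segment $s \mapsto x + sv$. To show that $d_vf(x)$ exists, I would pick any $c \in \cC^\infty(\R,U)$ with $c(0) = x$ and $c'(0) = v$ (which exists since $U$ is open), so that $(f\o c)'(0)$ exists because $f \o c \in \cZ^{m,1}(\R) \subseteq \cC^1(\R)$ by \eqref{eq:inclusions} and $m \ge 1$; but $(f \o c)'(0)$ alone only gives the derivative along that particular curve, not the directional derivative. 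The standard trick here (as in \cite[Section 12]{KM97}) is that two smooth curves through $x$ with the same $1$-jet, or more directly the affine curve $t \mapsto x+tv$ itself when it stays in $U$, produce the same value; a genuinely clean way is to observe that for each $(x,v)$ one can find a smooth curve whose image is an affine segment through $x$ in direction $v$ (reparametrize $t \mapsto x + \vh(t) v$ with $\vh$ smooth, $\vh(0)=0$, $\vh'(0)=1$, and $\vh$ constant outside a compact set so the curve stays in $U$), and the chain rule for smooth functions along this curve shows $\p_t|_{t=0}(f \o c)(t) = d_vf(x)$ exists and equals the derivative of $f$ restricted to that segment. So $df : U \times \R^d \to \R$ is well defined.

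Next I would prove the regularity assertion: for $\tilde c = (c_1, c_2) \in \cC^\infty(\R, U \times \R^d)$, i.e.\ $c_1 \in \cC^\infty(\R,U)$ and $c_2 \in \cC^\infty(\R,\R^d)$, I must show $t \mapsto df(c_1(t), c_2(t)) = d_{c_2(t)}f(c_1(t))$ is in $\cZ^{m-1,1}(\R)$. The idea is to introduce the two-variable function $g : V \to \R$, $g(t,s) := f(c_1(t) + s\, c_2(t))$, defined on the open set $V := \{(t,s) \in \R^2 : c_1(t) + s\,c_2(t) \in U\}$, which contains $\R \times \{0\}$. Then $\p_2 g(t,0) = d_{c_2(t)}f(c_1(t))$ is exactly the function whose regularity we want. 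I would like to apply \Cref{lem:inductionstep} to $g$, but that proposition is stated for $f : \R^2 \to \R$ with $f\o c \in \cZ^{m,1}$ for \emph{all} smooth curves into $\R^2$; here $g$ is only defined on $V$. The fix is localization: \Cref{lem:inductionstep}'s conclusion is a local statement (membership in $\cZ^{m-1,1}(\R)$ is checked on compact subsets of $\R$), so it suffices to work on a neighborhood of an arbitrary $t_0 \in \R$, shrink to a product neighborhood $J \times (-\ep,\ep) \subseteq V$, and extend $g|_{J \times (-\ep,\ep)}$ to a function on all of $\R^2$ using a smooth cutoff supported in $V$ — composing with a smooth diffeomorphism or just multiplying by a bump and noting that all curves we feed in can be assumed, after reparametrization, to have image in $J \times (-\ep,\ep)$.

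The crucial verification is then the hypothesis of \Cref{lem:inductionstep} for (the cutoff version of) $g$: for every $e \in \cC^\infty(\R,\R^2)$, say $e(u) = (e_1(u), e_2(u))$ landing in $V$, we need $g \o e \in \cZ^{m,1}(\R)$. But $(g \o e)(u) = f\bigl(c_1(e_1(u)) + e_2(u)\, c_2(e_1(u))\bigr) = (f \o \eta)(u)$ where $\eta(u) := c_1(e_1(u)) + e_2(u) c_2(e_1(u))$ is a $\cC^\infty$-curve in $U$ (a composition/product of smooth curves and smooth scalar functions, landing in $U$ by construction of $V$). By the hypothesis on $f$, $f \o \eta \in \cZ^{m,1}(\R)$, so $g \o e \in \cZ^{m,1}(\R)$ as required; the cutoff modification only multiplies by a fixed smooth compactly supported function, and $\cZ^{m,1}(\R)$ is a module over $\cC^\infty_c(\R)$ (or just use the product rule \eqref{eq:Leibniz} together with \Cref{thm:Zchar}), so the cut-off $g$ still satisfies the hypothesis. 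Then \Cref{lem:inductionstep} yields $\p_2 g(\cdot, 0) \in \cZ^{m-1,1}(\R)$ on the relevant chart, and patching over all $t_0$ gives $df \o \tilde c \in \cZ^{m-1,1}(\R)$.

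I expect the main obstacle to be bookkeeping around the localization: \Cref{lem:inductionstep} demands a globally defined $f : \R^2 \to \R$, whereas $g$ lives only on $V$, and one must be careful that (i) the cutoff procedure genuinely preserves the "for all smooth curves" hypothesis, not just for curves staying in a fixed compact set, and (ii) that restricting attention to a neighborhood of $t_0$ and then reparametrizing the test curves $e$ to have image in $J \times (-\ep,\ep)$ does not lose any information about the local $\cZ^{m-1,1}$-behaviour of $\p_2 g(\cdot,0)$ near $t_0$. A secondary, more routine point is checking that $\eta(u) = c_1(e_1(u)) + e_2(u) c_2(e_1(u))$ really is smooth and $U$-valued — this is immediate from the definition of $V$ and smoothness of compositions and products, but it is the linchpin that transfers the hypothesis from $f$ to $g$. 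The existence of $d_vf$ as a pointwise limit (as opposed to merely the one-sided derivative of some curve) is the other place requiring a short argument, handled by the affine-image reparametrization trick above.
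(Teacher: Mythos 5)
Your proposal follows essentially the same route as the paper: existence of $d_vf(x)$ via a reparametrized affine curve, then reduction to \Cref{lem:inductionstep} through the two-variable function $(t,s)\mapsto f(c_1(t)+s\,c_2(t))$ defined on the open set $V\supseteq\R\times\{0\}$, with the hypothesis of \Cref{lem:inductionstep} verified by rewriting $g\o e=f\o\eta$ for a smooth $U$-valued curve $\eta$. The only real divergence is in how you globalize $g$ from $V$ to $\R^2$. The paper does this not by multiplying by a bump but by precomposing in the domain: it fixes $[-R,R]\times[-r,r]\subseteq V$ and smooth maps $\vh_u:\R\to[-u,u]$ equal to the identity on $[-\tfrac u2,\tfrac u2]$, and sets $\tilde g(t,s):=f\bigl(c_1(\vh_R(t))+\vh_r(s)\,c_2(\vh_R(t))\bigr)$; then $\tilde g$ is defined on all of $\R^2$, every smooth curve into $\R^2$ is pushed to a smooth curve into $U$, so the ``for all curves'' hypothesis is immediate, and $\p_2\tilde g(t,0)=df(c_1(t),c_2(t))$ for $|t|\le R/2$ with $R$ arbitrary. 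This is exactly the ``composing with a smooth (re)parametrization'' alternative you mention in passing, and it is the cleaner option: your primary device (bump multiplication) can be made to work but needs the open-cover/Lebesgue-number locality argument for curves that leave $V$, and your remark that the test curves ``can be assumed, after reparametrization, to have image in $J\times(-\ep,\ep)$'' is not correct as stated, since \Cref{lem:inductionstep} quantifies over all smooth curves into $\R^2$ and an arbitrary such curve cannot be reparametrized into a fixed bounded box. With the domain-precomposition version substituted for the cutoff, your argument matches the paper's proof.
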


\begin{proof}
   The directional derivative $d_vf(x)$ exists, since $s \mapsto f(x+sv)$ belongs to $\cZ^{m,1} \subseteq \cC^1$ for $s$ near $0$.
   Let $(x,v) \in \cC^\infty(\R,U \times \R^d)$ and
   consider the $\cC^\infty$-map $g(t,s) := x(t) + s v(t)$.
   Then the open set $\Om := g^{-1}(U)$ contains $\R \times \{0\}$.
   Fix $R,r>0$ such that $[-R,R] \times [-r,r] \subseteq \Om$.
   For any $u>0$ choose a $\cC^\infty$-function $\vh_u : \R \to [-u,u]$ such that $\vh_u(x)=x$ for all $x \in [-\frac{u}2,\frac{u}2]$.
   Then $\tilde g(t,s) := g(\vh_R(t),\vh_r(s))$ maps $\R^2$ to $U$ and coincides with $g$ on $[-\frac{R}2,\frac{R}2] \times [-\frac{r}2,\frac{r}2]$.
   Thus
   $f \o \tilde g : \R^2 \to \R$ has the property that $f \o \tilde g \o c \in \cZ^{m,1}(\R)$ for all $c \in \cC^\infty(\R,\R^2)$, by assumption. By \Cref{lem:inductionstep},
   $t \mapsto  \p_2 (f\o \tilde g)(t,0)$ belongs to $\cZ^{m-1,1}(\R)$. For $t \in [-\frac{R}2,\frac{R}2]$ we have
   \[
      \p_2 (f\o \tilde g)(t,0) = \p_s|_{s=0} \big(f(x(t) + s v(t))\big) = d_{v(t)} f(x(t)) = (df \o (x,v))(t).
   \]
   Since $R>0$ was arbitrary, we may conclude that $df \o (x,v) \in  \cZ^{m-1,1}(\R)$.
\end{proof}

Now we may complete the proof of \Cref{thm:main1}.
One implication simply follows from \Cref{prop:composition}.
For the other implication
suppose that $f : U \to \R$
has the property that
$f \o c \in \cZ^{m,1}(\R)$ for all $c \in \cC^\infty(\R,U)$.
   We proceed by induction on $m$. The case $m=0$ follows from \Cref{lem:C0om}.
   Suppose that $m\ge 1$.
   We may infer from \Cref{prop:key4} that the partial derivatives $\p_j f(x)$, $j=1,\ldots,d$, of first order exist at all $x \in U$
   and $\p_j f \o c \in  \cZ^{m-1,1}(\R)$ for each $c \in C^\infty(\R,U)$ and all $j$.
   The induction hypothesis implies that
   $\p_j f \in  \cZ^{m-1,1}(U)$ for all $j$, that is,
   $f \in \cZ^{m,1}(U)$.
This ends the proof of \Cref{thm:main1}.

\subsection{Auxiliary results}
We need some preparatory results for the proof of  \Cref{lem:inductionstep}.
First we derive some properties of functions in $\cZ^{m,1}(\R)$.

We use the Landau $O$-notation at $0$ in the following way.
Let $I \subseteq \R$ be a bounded interval.
If $\vh_x(h)$ is a function in $h$ which may also depend on $x \in I$ and $\ps(h)>0$ is a function of $h$,
then $\vh_x(h) = O_I(\ps(h))$ shall mean that there is a constant $C=C(I)>0$ such that
$|\vh_x(h)| \le C\,\ps(h)$ for all $x \in I$ and all sufficiently small $h$.

Of crucial importance will be a Taylor formula for functions in $\cZ^{m,1}(\R)$:

\begin{lemma} \label{lem:Taylor}
  Let $m \in \N$ and $f \in \cZ^{m,1}(\R)$. Then, for each bounded interval $I \subseteq \R$,
  \begin{equation} \label{eq:Taylor}
    \frac{1}{2^m} f(x+2h) - 2 f(x+h)
    + \sum_{j=0}^m \Big(2 - \frac{1}{2^{m-j}}\Big) \frac{f^{(j)}(x)}{j!}  h^{j} = O_I (|h|^{m+1}).
  \end{equation}
\end{lemma}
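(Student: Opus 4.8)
The plan is to derive the Taylor-type identity \eqref{eq:Taylor} by iterating the second-difference control on the top derivative $f^{(m)}$ and integrating it back up through the lower derivatives, using the second-order integral formula \eqref{eq:integral} and ordinary Taylor expansion with integral remainder for $f^{(j)}$ with $j<m$. More precisely, write $P_m(x,h) := \frac{1}{2^m}f(x+2h) - 2f(x+h) + \sum_{j=0}^m\bigl(2-\frac{1}{2^{m-j}}\bigr)\frac{f^{(j)}(x)}{j!}h^j$ for the left-hand side. Note the coefficients are engineered so that the constant, linear, \dots, degree-$m$ terms of the Taylor expansions of $\frac{1}{2^m}f(x+2h)$ and $-2f(x+h)$ around $x$ are exactly cancelled by the explicit sum; indeed $\frac{1}{2^m}\cdot\frac{(2h)^j}{j!} - 2\cdot\frac{h^j}{j!} = \bigl(\frac{1}{2^{m-j}}-2\bigr)\frac{h^j}{j!}$ for each $j\le m$. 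So what remains is a combination of the $(m+1)$-th order remainders of these two Taylor expansions together with a remainder built from $f^{(m)}$.

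The cleanest route is induction on $m$. For $m=0$ the claim reads $f(x+2h) - 2f(x+h) + f(x) = O_I(|h|)$, which is precisely the defining estimate \eqref{eq:Zdef} for $f\in\cZ^{0,1}(\R)$ applied at the point $x+h$ with increment $h$ (after enlarging $I$ to a slightly larger interval $I'$ containing $x,x+h,x+2h$ for small $h$, which only changes the constant). For the inductive step, suppose the formula holds for $m-1$ and all functions in $\cZ^{m-1,1}(\R)$; given $f\in\cZ^{m,1}(\R)$, apply the inductive hypothesis to $f' \in \cZ^{m-1,1}(\R)$, and then integrate the resulting estimate from $x$ to $x+2h$ and from $x$ to $x+h$ with appropriate weights to reconstruct $P_m(x,h)$ from $P_{m-1}(x,h)$ for $f'$. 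Integrating an $O_I(|h|^{m})$ estimate over an interval of length $O(|h|)$ produces an $O_{I}(|h|^{m+1})$ term, which is exactly the gain needed; the antiderivative of the polynomial part of $P_{m-1}(\cdot,h)$ for $f'$, chosen with the right constant of integration, is precisely the polynomial part of $P_m(x,h)$ for $f$. One must check that the combination of the two integrals (weights $\frac{1}{2^m}$ against the $x\to x+2h$ integral and $-2$... actually $-1$ suitably against $x\to x+h$) reproduces the stated coefficients $\bigl(2-\frac{1}{2^{m-j}}\bigr)$; this is a routine but slightly fiddly bookkeeping of binomial/geometric coefficients, and it is where I expect to spend most of the care.

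The main obstacle is precisely that coefficient bookkeeping: making sure the constants of integration at each stage line up so that the recursion closes on exactly the polynomial $\sum_{j=0}^m(2-2^{j-m})\frac{f^{(j)}(x)}{j!}h^j$ rather than on some shifted variant, and that the base case is correctly identified with the Zygmund estimate rather than merely with continuity. An alternative to the induction, which sidesteps some of this, is to invoke Theorem \ref{thm:Zchar}: for $f\in\cZ^{m,1}(\R)$ the map $(x,h)\mapsto h\,\de^{m+2}_{\mathrm{eq}}f(x;h)$ is locally bounded, i.e.\ $|\De^{m+2}_hf(x)|=O_I(|h|^{m+1})$, and simultaneously $f\in\cC^{m-1,1}(\R)$ gives a Taylor expansion of $f$ to order $m$ with an $O_I(|h|^{m})$ remainder controlled by $f^{(m-1)}$ being Lipschitz — actually one wants the remainder in the form $f^{(m)}(x)h^m/m! + o$, using that $f^{(m)}$ exists and is continuous so the $m$-th order Taylor remainder is $o_I(|h|^m)$, then upgrading via the finite-difference bound. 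Concretely: expand $f(x+2h)$ and $f(x+h)$ by Taylor to order $m$ with integral remainders $R_{2h},R_h$ involving $f^{(m)}$; then $P_m(x,h) = \frac{1}{2^m}R_{2h} - 2R_h$, and this combination is exactly $-\frac{1}{2^m\,m!}\int_x^{x+2h}\!\!\cdots$ arranged so as to coincide, up to $O_I(|h|^{m+1})$, with a constant multiple of the second finite difference of $f^{(m)}$-type, hence $O_I(|h|^{m+1})$ by the Zygmund bound on $f^{(m)}$. I would present the induction as the main argument since it is self-contained, and remark that it can also be read off from Theorem \ref{thm:Zchar}.
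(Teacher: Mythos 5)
Your plan is correct and essentially coincides with the paper's own proof: induction on $m$, with the base case $f(x+2h)-2f(x+h)+f(x)=O_I(|h|)$ read off from the Zygmund estimate at $x+h$ with increment $h$, and the inductive step obtained by integrating the level-$(m-1)$ identity for $f'$ in the $h$-variable (which is the same computation as your weighted integrals over $[x,x+2h]$ and $[x,x+h]$). The bookkeeping you flag does close cleanly: integrating $\frac{1}{2^{m-1}}f'(x+2s)-2f'(x+s)$ over $s\in[0,h]$ yields $\frac{1}{2^{m}}f(x+2h)-2f(x+h)$ plus the boundary contribution $\bigl(2-\frac{1}{2^{m}}\bigr)f(x)$, which is exactly the $j=0$ coefficient at level $m$, while the polynomial part reindexes to the coefficients $\bigl(2-\frac{1}{2^{m-i}}\bigr)\frac{f^{(i)}(x)}{i!}h^{i}$ for $1\le i\le m$.
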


\begin{proof}
   We proceed by induction on $m$. For $m=0$ the statement follows from the definition.
   Let us assume that the identity holds for $m$ and show it for $m+1$. We suppose that $h>0$;
   if $h<0$ the arguments are similar.
   Let $F \in  \cZ^{m+1,1}(\R)$ with $F' = f \in  \cZ^{m,1}(\R)$. Integrating \eqref{eq:Taylor} in $h$ yields
   \begin{align*}
     \frac{1}{2^{m+1}}&\big(F(x+2h) - F(x)\big)- 2 \big(F(x+h) - F(x)\big)
      + \sum_{j=0}^m \Big(2 - \frac{1}{2^{m-j}}\Big) \frac{F^{(j+1)}(x)}{(j+1)!}  h^{j+1}
      \\
      &=\frac{1}{2^{m+1}}F(x+2h) - 2 F(x+h)
       + \sum_{i=0}^{m+1} \Big(2 - \frac{1}{2^{m+1-i}}\Big) \frac{F^{(i)}(x)}{i!}  h^{i}
      \\
      &= O_I(h^{m+2})
   \end{align*}
   completing the induction.
\end{proof}

Let $f : \R \to \R$ and $m \in \N$. We set
\[
  D_m(f)(x;h) := \frac{1}{2^m} f(x+2h) - 2 f(x+h),
  \quad x,h \in \R.
\]
If $m\ge 1$ we can approximate $hf'(x)$ to order $m+1$ in $h$
by a suitable linear combination of $D_m(f)(x;jh)$, for $j = 0,1,\ldots,m$,
in a uniform way for $f \in \cZ^{m,1}(\R)$ and $x \in I$:

\begin{lemma} \label{lem:firstder}
  Let $m \in \N_{\ge 1}$. There exist constants $a_0,\ldots,a_{m} \in \R$ such that for all $f \in \cZ^{m,1}(\R)$ we have,
  for each bounded interval $I \subseteq \R$,
    \[
      h f'(x) -  \sum_{j=0}^{m}  a_j D_m(f)(x;jh)  =O_I(|h|^{m+1}).
    \]
\end{lemma}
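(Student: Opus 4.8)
The plan is to produce the constants $a_0,\ldots,a_m$ by a linear-algebra argument driven by the Taylor formula \eqref{eq:Taylor} of \Cref{lem:Taylor}. The key observation is that $D_m(f)(x;jh)$ is, up to an error of order $|h|^{m+1}$, a fixed linear combination of $h^i f^{(i)}(x)/i!$ for $i=0,\ldots,m$, with coefficients that depend only on $j$ and $m$, not on $f$. Concretely, replacing $h$ by $jh$ in \eqref{eq:Taylor} gives
\[
  D_m(f)(x;jh) = -\sum_{i=0}^m \Big(2 - \frac{1}{2^{m-i}}\Big) \frac{f^{(i)}(x)}{i!}\, (jh)^i + O_I(|h|^{m+1}),
\]
so writing $b_i := -(2 - 2^{-(m-i)})$ we have $D_m(f)(x;jh) = \sum_{i=0}^m b_i j^i \dfrac{h^i f^{(i)}(x)}{i!} + O_I(|h|^{m+1})$. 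Note $b_1 = -(2-2^{-(m-1)}) \ne 0$, which will be what makes the $i=1$ term invertible.

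\textbf{Main steps.} First, I would record the expansion above for each $j = 0,1,\ldots,m$, keeping the $O_I$ uniformity over $f \in \cZ^{m,1}(\R)$ and $x \in I$ (this uniformity is exactly what \Cref{lem:Taylor} provides, since its constant depends only on $I$). Second, I would set up the requirement: we want $\sum_{j=0}^m a_j D_m(f)(x;jh)$ to match $hf'(x)$ modulo $O_I(|h|^{m+1})$, i.e.\ after collecting powers of $h$ we need
\[
  \sum_{j=0}^m a_j\, b_i\, j^i = \begin{cases} 1 & i = 1,\\ 0 & i \in \{0,2,3,\ldots,m\}.\end{cases}
\]
Since $b_i \ne 0$ for every $i$ (as $2 - 2^{-(m-i)} \in [1,2)$ is never zero), this is equivalent to the Vandermonde-type system $\sum_{j=0}^m a_j j^i = \delta_{i1}/b_1$ for $i = 0,\ldots,m$. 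Third, I would invoke invertibility of the $(m+1)\times(m+1)$ Vandermonde matrix $(j^i)_{0\le i,j\le m}$ — the nodes $0,1,\ldots,m$ being distinct — to conclude that a (unique) solution $(a_0,\ldots,a_m)$ exists; here I must be slightly careful about the $j=0$ column, where $j^i$ is $1$ for $i=0$ and $0$ otherwise, which is the standard convention $0^0=1$ and still yields a genuine Vandermonde determinant $\prod_{0\le k<l\le m}(l-k)\ne 0$. Finally, with these $a_j$ fixed, summing the expansions and using that the error terms add (finitely many, each $O_I(|h|^{m+1})$ with a constant depending only on $I$ and $m$) gives $hf'(x) - \sum_{j=0}^m a_j D_m(f)(x;jh) = O_I(|h|^{m+1})$, as claimed.

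\textbf{Anticipated obstacle.} The argument is essentially bookkeeping once \Cref{lem:Taylor} is in hand, so there is no deep obstacle; the one point requiring care is making sure the $O_I$ estimates are genuinely uniform in $f$ and $x$ and that nothing hidden in ``sufficiently small $h$'' depends on $j$ — but since we only use finitely many rescalings $h \mapsto jh$ with $j \le m$, smallness of $h$ can be chosen uniformly. A secondary point is simply verifying $b_1 \ne 0$ (equivalently that the target vector $\delta_{i1}/b_1$ is well-defined), which is immediate. One should also note the statement needs $m \ge 1$ precisely so that the $h^1$-coefficient $b_1$ appears in the range $i \le m$; for $m=0$ there would be no way to isolate $hf'(x)$, consistent with the hypothesis $m \in \N_{\ge 1}$.
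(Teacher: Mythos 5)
Your proposal is correct and follows essentially the same route as the paper: expand $D_m(f)(x;jh)$ via \Cref{lem:Taylor}, collect powers of $h$, and solve the resulting Vandermonde system with right-hand side $\delta_{i1}\,(2^{-(m-1)}-2)^{-1}$. The only cosmetic difference is that you spell out the $j=0$ column and the nonvanishing of the $h^1$-coefficient, which the paper leaves implicit.
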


\begin{proof}
  By \Cref{lem:Taylor}, for any choice of $a_0,\ldots,a_{m} \in \R$ we have
  \begin{align*}
    \sum_{j=0}^{m}  a_j D_m(f)(x;jh)
    &= \sum_{j=0}^{m}  a_j \Big(\sum_{i=0}^m \Big(\frac{1}{2^{m-i}}-2\Big) \frac{f^{(i)}(x)}{i!}  (jh)^{i} + O_I(|h|^{m+1}) \Big)
    \\
    &= \sum_{i=0}^m \Big(\sum_{j=0}^{m}  a_j  j^i\Big)  \Big(\frac{1}{2^{m-i}}-2\Big) \frac{f^{(i)}(x)}{i!}  h^{i} + O_I(|h|^{m+1}).
  \end{align*}
  To obtain the assertion it suffices to choose $a_0,\ldots,a_{m}$ such that
  \begin{align*}
     \sum_{j=0}^{m}a_j j &= \Big(\frac{1}{2^{m-1}}-2\Big)^{-1},
     \\
     \sum_{j=0}^{m}a_j j^{i} &= 0, \quad 0\le i \le m,~ i \ne 1,
  \end{align*}
  which is possible, since the coefficients of this linear system of equations form a
  Vandermonde matrix.
\end{proof}

Let us set
\[
    A_m(f)(x;h) := \sum_{j=0}^{m}  a_j D_m(f)(x;jh),
\]
where $a_0,\ldots,a_{m} \in \R$ are the constants provided by \Cref{lem:firstder}.

\subsection{Proof of \Cref{lem:inductionstep}} \label{sec:proofkey}

For $f : \R^2 \to \R$ we define
\[
    \mathbf{D}_m(f)(x,h) := \frac{1}{2^m} f(x,2h) - 2 f(x,h)
\]
and
\[
    \mathbf{A}_m(f)(x,h) := \sum_{j=0}^{m}  a_j \mathbf{D}_m(f)(x,jh),
\]
where $a_0,\ldots,a_{m} \in \R$ are the constants provided by \Cref{lem:firstder}.

From \Cref{lem:firstder} we get an approximation result for functions in two variables:

\begin{lemma} \label{lem:error}
  Let $m \in \N_{\ge 1}$.
  Let $f : \R^2 \to \R$ be such that $f \o c \in \cZ^{m,1}(\R)$ for all $c \in \cC^\infty(\R,\R^2)$.
  For each compact interval $I \subseteq \R$ there is a constant $C>0$ such that for all $x,h \in I$ we have
  \begin{align*}
     \big| h \p_2 f(x,0) - \mathbf{A}_m(f)(x,h) \big| \le C |h|^{m+1}.
  \end{align*}
\end{lemma}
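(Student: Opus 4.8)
The plan is to reduce the two-variable estimate to the one-variable estimate of Lemma \ref{lem:firstder} applied along a single, cleverly chosen smooth curve. Fix a compact interval $I$. The naive approach would be to apply Lemma \ref{lem:firstder} to the function $h \mapsto f(x,h)$ for each fixed $x$; but this gives a constant $C$ that a priori depends on $x$, and there is no uniformity in $x$ built into the hypothesis (which only speaks about composites $f \o c$). The standard device here — as in \cite[Section 12]{KM97} — is a contradiction argument via the general curve lemma \Cref{curvelemma}: if the estimate fails uniformly on $I$, then there are points $x_n, h_n \in I$ with $h_n \to 0$ (we may assume $h_n \ne 0$, since $h=0$ gives both sides zero) along which $|h_n \p_2 f(x_n,0) - \mathbf A_m(f)(x_n,h_n)| \ge n\,|h_n|^{m+1}$. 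After passing to a subsequence we may assume $x_n \to x_*$ fast and $h_n \to 0$ fast, so that the curves $c_n(t) := (x_n, t)$ (suitably reparametrised/rescaled) converge fast to the constant curve $t \mapsto (x_*,0)$ in the appropriate sense.

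First I would massage the failing sequence into the shape required by \Cref{curvelemma}: rescale so that the relevant interval of each $c_n$ has length $s_n$ with $\sum_n s_n < \infty$, and arrange $n^k$-boundedness of the rescaled curves. Concretely, after translating $x_n$ out (replacing $f$ by $f(\cdot + x_n, \cdot)$ is not allowed since $x_n$ varies, so instead one keeps $x_n$ as the first coordinate and lets the second coordinate run over a small interval of length comparable to $|h_n|$, rescaled by $2^{-n}$ or similar). Then \Cref{curvelemma} produces a single $c \in \cC^\infty(\R,\R^2)$ and a convergent sequence $t_n$ with $c(t + t_n) = c_n(t)$ on $|t| \le s_n$. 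By hypothesis $g := f \o c \in \cZ^{m,1}(\R)$. Now $g$ near $t_n$ reproduces $h \mapsto f(x_n, \cdot)$ up to the reparametrisation, so $D_m(g)(t_n; \cdot)$ equals $\mathbf D_m(f)(x_n; \cdot)$ (up to the rescaling factor) and $g'(t_n)$ equals $\p_2 f(x_n,0)$ (again up to rescaling, using that $c$ is tangent to the second axis at those points, which we arrange by taking $c_n(t) = (x_n, \lambda_n t)$ with an appropriate $\lambda_n$). Applying \Cref{lem:firstder} to $g$ on a fixed bounded interval containing all the $t_n$ (they converge, hence lie in such an interval) yields a single constant $C' = C'(g)$ with $|s\, g'(t) - \sum_j a_j D_m(g)(t; js)| \le C' |s|^{m+1}$ for all $t$ in that interval and all small $s$. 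Evaluating at $t = t_n$, $s = s_n$ and translating back through the rescaling contradicts the assumed lower bound $\ge n\,|h_n|^{m+1}$ once $n$ is large.

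The main obstacle is bookkeeping the rescaling constants so that (a) the curves $c_n$ genuinely converge fast to $0$ as required by \Cref{curvelemma} — this forces $|h_n|$ and $|x_n - x_*|$ to decay fast, which is legitimate only after passing to a subsequence, and one must check the failing inequality survives passing to a subsequence (it does, since it is a strict inequality with $n$ on the right that we can relabel) — and (b) the homogeneity of both $h\p_2 f(x,0)$ and $\mathbf A_m(f)(x,h)$ under the substitution $h \mapsto \lambda h$ is tracked correctly, so that the factor of $|h_n|^{m+1}$ on the right of the hypothesised failure matches the $|s_n|^{m+1}$ coming out of \Cref{lem:firstder}. Both $\mathbf D_m$ and the derivative term are built from evaluations of $f$ at points $(x, (\text{integer})\cdot h)$, so replacing $h$ by $\lambda h$ and $f$ by its composite with $t \mapsto (x,\lambda t)$ is compatible; the $m$-th power weight $2^{-m}$ in $D_m$ does not interact with this rescaling and causes no trouble. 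Once the rescaling ledger is set up, the contradiction is immediate from \Cref{lem:firstder}, and no genuinely new estimate is needed.
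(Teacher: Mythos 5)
Your plan follows essentially the same route as the paper: negate the uniform estimate, encode the failing data $(x_n,h_n)$ into curves $c_n(t)=(x_n,\lambda_n t)$ with $\lambda_n=2^{-n}$, glue them with \Cref{curvelemma}, and play \Cref{lem:firstder} for $f\o c$ off against the assumed failure. The rescaling ledger you describe does close up exactly as you predict, because both $h\,\p_2f(x,0)$ and $\mathbf{A}_m(f)(x,h)$ are built from $f$ evaluated at $(x_n,(\text{integer})\cdot h_n)$, which the curve reproduces at parameters $t_n+(\text{integer})\cdot\lambda_n^{-1}h_n$.

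Two points in your write-up need attention before the argument actually closes. First, the normalization of the contradiction hypothesis: with the failure stated as $\ge n\,|h_n|^{m+1}$ and the rescaling $s_n'=\lambda_n^{-1}h_n=2^nh_n$, \Cref{lem:firstder} only gives an upper bound $C'\,|2^nh_n|^{m+1}=C'\,2^{n(m+1)}|h_n|^{m+1}$, and $n\not> C'2^{n(m+1)}$ for large $n$ -- so no contradiction. You flag this as bookkeeping, and indeed the fix is to negate the lemma with the divergent constants $C_n=n\,2^{n(m+1)}$ from the start (legitimate, since the negation holds for every constant); this is exactly what the paper does. Second, and more substantively, you assert that the failing sequence can be taken with $h_n\to0$, but this is not automatic: if $h_n$ stays bounded away from $0$, the failure forces $h_n\,\p_2f(x_n,0)-\mathbf{A}_m(f)(x_n,h_n)$ to be unbounded, and ruling that out requires knowing that $x\mapsto\p_2f(x,0)$ is bounded on $I$ (and that $f$ is bounded on compacta). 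Neither follows trivially from the hypothesis, which only controls $f$ along curves; the paper devotes a separate preliminary step to this, again via \Cref{curvelemma} applied to $c_n(t)=(x_n,2^{-n}t)$, deriving a contradiction with $f\o c\in\cC^1(\R)$ from $|(f\o c)'(t_n)|=2^{-n}|\p_2f(x_n,0)|\ge n$. This step is missing from your plan and must be supplied; it also disposes of the case $0\notin I$, where $h$ is bounded away from $0$ throughout.
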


\begin{proof}
  In the case $0 \not \in I$ so that $h$ is bounded away from $0$, it is enough to check that $x \mapsto \p_2 f(x,0)$ is bounded on $I$.
  (That $f$ and consequently $(x,h) \mapsto \mathbf{A}_m(f)(x,h)$ is bounded on $I \times I$ follows easily
  from \Cref{curvelemma} or \cite[2.8]{KM97}.)
  Suppose, for contradiction, that there exist $x_n,x \in I$ with $|x_n - x| \le 4^{-n}$ and $|\p_2 f(x_n,0)| \ge n 2^n$.
  Applying \Cref{curvelemma} to $c_n(t) := (x_n,2^{-n}t)$ and $s_n :=2^{-n}$ gives a $\cC^\infty$-curve $c : \R \to \R^2$
  and a convergent sequence of reals $t_n$
  such that $c(t+t_n)=c_n(t)$ for $|t|\le s_n$.
  Thus,
  $|(f\o c)'(t_n)| = |(f\o c_n)'(0)| = 2^{-n}|\p_2 f(x_n,0)|\ge n$, contradicting that $f\o c \in \cZ^{m,1}(\R) \subseteq \cC^1(\R)$.

  Now assume that $0 \in I$.
  Suppose, for contradiction, that there are $x_n,h_n \in I$ such that
  \begin{align*}
     \big| h_n \p_2 f(x_n,0) -  \mathbf{A}_m(f)(x_n,h_n) \big| \ge n 2^{n(m+1)} |h_n|^{m+1}.
  \end{align*}
  By passing to subsequences, we may assume that $x_n \to x$ and $h_n \to 0$ (by the first paragraph)
  and in turn that
  $|x_n - x| \le 4^{-n}$ and $|h_n| \le 4^{-n}$.
  Applying \Cref{curvelemma} to $c_n(t) := (x_n, 2^{-n}t)$ and $s_n = 2m\cdot 2^{-n}$
  we find a $\cC^\infty$-curve $c : \R \to \R^2$
  and a convergent sequence of reals $t_n$
  with $c(t+t_n) = c_n(t)$ for $|t|\le s_n$.
  Then
  \begin{align*}
    \MoveEqLeft
     \Big|2^n h_n (f \o c)'(t_n) -  A_m(f\o c)(t_n; 2^n h_n) \Big|
     \\
     &= \Big|2^n h_n (f \o c_n)'(0) - A_m(f\o c_n)(0; 2^n h_n) \Big|
     \\
     &= \big| h_n \p_2 f(x_n,0) -  \mathbf{A}_m(f)(x_n,h_n) \big|
      \\
     &\ge n (2^{n} |h_n|)^{m+1}.
  \end{align*}
  But this contradicts \Cref{lem:firstder}.
\end{proof}

Now we are ready to show \Cref{lem:inductionstep}, assuming the validity of \Cref{thm:main2} which will be (re)proved in \Cref{sec:proofmain2}.
Let $m \in \N_{\ge 1}$ and suppose that $f : \R^2 \to \R$ satisfies $f \o c \in \cZ^{m,1}(\R)$ for all $c \in \cC^\infty(\R,\R^2)$.
Then $g := \p_2f(\cdot,0)$ is well-defined. We have to prove that $g \in \cZ^{m-1,1}(\R)$.
By \Cref{thm:Zchar}, it suffices to check the following three claims.

\begin{claim*}[i]
  $g$ is continuous.
\end{claim*}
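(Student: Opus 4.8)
The plan is to show that $g$ is the locally uniform limit of continuous functions, using \Cref{lem:error} to control the approximation. From \Cref{lem:error}, for each compact interval $I$ there is $C>0$ with $|h\,\p_2 f(x,0) - \mathbf{A}_m(f)(x,h)| \le C|h|^{m+1}$ for all $x,h \in I$; dividing by $h$ (for $h\ne 0$) gives $|g(x) - h^{-1}\mathbf{A}_m(f)(x,h)| \le C|h|^m$ for all $x \in I$ and all small $h\ne 0$. Thus $g$ is the uniform-on-$I$ limit, as $h\to 0$, of the functions $x \mapsto h^{-1}\mathbf{A}_m(f)(x,h)$, which are finite linear combinations of $x \mapsto h^{-1}(2^{-m}f(x,2jh) - 2f(x,jh))$.

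Hence it suffices to know that $x \mapsto f(x,s)$ is continuous for each fixed $s$. This is a standard consequence of the hypothesis that $f\o c \in \cZ^{m,1}(\R) \subseteq \cC^0(\R)$ for all smooth curves $c$: indeed, for fixed $s$ the map $x \mapsto f(x,s)$ is $(x \mapsto f(c_0(x)))$ for the affine curve $c_0(x) = (x,s)$, which is smooth, so $x\mapsto f(x,s)$ lies in $\cZ^{m,1}(\R)$ and in particular is continuous. (Alternatively one invokes \cite[Theorem 4.11]{KM97}, as in the proof of \Cref{lem:C0om}, to see that $f$ itself is continuous on $\R^2$; then $x\mapsto f(x,s)$ is continuous for trivial reasons.) Either way, each $x \mapsto h^{-1}\mathbf{A}_m(f)(x,h)$ is continuous on $I$, and as a locally uniform limit of continuous functions, $g$ is continuous. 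Since $I$ was an arbitrary compact interval, $g$ is continuous on $\R$.

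I do not expect a genuine obstacle here — the content has essentially been front-loaded into \Cref{lem:error}. The only point requiring a little care is that \Cref{lem:error} is an estimate valid for small $h$ depending on $I$, so the uniform convergence is as $h \to 0$ along, say, $h = 1/n$, which is all that is needed to transfer continuity; one should also note that the case $0 \notin I$ of \Cref{lem:error} already records that $g$ is bounded on such $I$, so there is no issue away from $0$ either.
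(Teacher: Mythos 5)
Your proof is correct, but it takes a genuinely different route from the paper. The paper disposes of Claim (i) in one line: since $f\o c \in \cZ^{m,1}(\R) \subseteq \cC^{1,\al}(\R)$ for $\al\in(0,1)$ by \eqref{eq:inclusions} (here $m\ge 1$ is used), it invokes the H\"older--Lipschitz version of the theorem, i.e.\ \Cref{thm:main2} via \Cref{lem:inductionstepHL} as proved in \Cref{sec:proofmain2}, which yields $\p_2 f(\cdot,0)\in\cC^{0,\al}(\R)$ and in particular continuity. You instead argue directly: by \Cref{lem:error}, $g$ is the locally uniform limit as $h\to 0$ of the functions $x\mapsto h^{-1}\mathbf{A}_m(f)(x,h)$, and each of these is continuous because $x\mapsto f(x,s)$ is the composite of $f$ with the smooth curve $x\mapsto(x,s)$, hence lies in $\cZ^{m,1}(\R)\subseteq\cC^0(\R)$. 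Both arguments are sound. The paper's version is shorter on the page but leans on the entire parallel H\"older development (which the paper needs anyway for Claim (ii)); yours is self-contained within the Zygmund induction step, reuses the approximation lemma that is the real workhorse of the section, and is more elementary, at the cost of delivering only continuity rather than $\cC^{0,\al}$-regularity --- which is all Claim (i) asks for. Your cautionary remarks (taking $h=1/n$, enlarging the interval so that $0$ lies in it, and noting that the division by $h$ is harmless since $m\ge 1$ makes the error $O(|h|^m)$ after dividing) address the only points where care is needed.
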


Since $f \o c \in \cC^{1,\al}(\R)$ for all $c \in \cC^\infty(\R,\R^2)$ and all $\al \in (0,1)$, by \eqref{eq:inclusions},
we may invoke the result for $\cC^{1,\al}$; cf.\ \Cref{thm:main2} and its proof in \Cref{sec:proofmain2}. Claim (i) follows.

\begin{claim*}[ii]
  $\de^{m-1}_{\on{eq}} g(x;h)$ is locally bounded in $(x,h) \in \R \times (\R\setminus \{0\})$.
\end{claim*}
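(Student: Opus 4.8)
$\de^{m-1}_{\on{eq}} g(x;h)$ is locally bounded in $(x,h) \in \R \times (\R\setminus\{0\})$.

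\textbf{Plan.} The strategy is to relate the equidistant difference quotient of order $m-1$ of $g = \p_2 f(\cdot,0)$ to difference quotients of $f$ composed with a single cleverly chosen smooth curve. The starting point is \Cref{lem:error}: for $x$ in a compact interval $I$ and small $h$ we have $h\,\p_2 f(x,0) = \mathbf{A}_m(f)(x,h) + O_I(|h|^{m+1})$, where $\mathbf{A}_m(f)(x,h)$ is built from the values $f(x, c h)$ for finitely many constants $c$ (namely $c = j\cdot 2^i$ for $0\le i\le m$, $0\le j\le m$). Dividing by $h$, we want to extract an expression for $g(x) = \p_2 f(x,0)$ that is, up to an $O_I(|h|^m)$ error, a fixed linear combination of values $f(x, c_\ell h)$ with the scalars $c_\ell$ independent of $x$ and $h$. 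Then $\de^{m-1}_{\on{eq}} g(x_0;k)$ becomes (up to error terms) a fixed linear combination of quantities of the form $\de^{m-1}_{\on{eq}}\big(x \mapsto f(x, c_\ell h)\big)(x_0;k)$.

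\textbf{Key steps.} First, fix a compact interval $I$ and a point $x_0$ and spacing $k$ with $x_0, x_0+k, \dots, x_0+(m-1)k \in I$; it suffices to bound $\de^{m-1}_{\on{eq}} g(x_0;k)$ uniformly for such data with $k$ in a bounded range. Second, apply the $O_I$-estimate from \Cref{lem:error} at each of the nodes $x_0 + \nu k$ ($\nu = 0,\dots,m-1$) with a common auxiliary increment $h$ (to be chosen comparable to $k$, e.g. $h$ running over a fixed small interval), obtaining
\[
  g(x_0+\nu k) = \frac 1h \mathbf{A}_m(f)(x_0+\nu k, h) + O_I(|h|^{m}).
\]
Taking the alternating binomial combination $\frac{1}{k^{m-1}}\sum_\nu (-1)^{m-1-\nu}\binom{m-1}{\nu}(\cdot)$ gives
\[
  \de^{m-1}_{\on{eq}} g(x_0;k) = \frac 1h \sum_{\ell} \lambda_\ell\, \de^{m-1}_{\on{eq}}\big(x\mapsto f(x, c_\ell h)\big)(x_0;k) + \frac{1}{k^{m-1}}\sum_\nu (-1)^{m-1-\nu}\binom{m-1}{\nu} O_I(|h|^m),
\]
with $\lambda_\ell, c_\ell$ absolute constants. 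Third, bound each piece: the error sum is $O_I(|h|^m/k^{m-1})$, which is bounded once $h$ is chosen of the same order as $k$ (indeed, keeping $k$ bounded and choosing $h = k$, say, it is $O_I(|k|) = O(1)$); and each main term is controlled because $x \mapsto f(x, c_\ell h)$ is the composition of $f$ with the smooth curve $t \mapsto (t, c_\ell h)$ — a curve parametrized affinely — so $f \o c \in \cZ^{m,1}(\R) \subseteq \cC^{m-1,1}(\R)$, and by \Cref{thm:Lchar} (or rather \Cref{cor:bd}) the quantity $\de^{m-1}_{\on{eq}}(f(\cdot, c_\ell h))(x_0;k)$ is locally bounded in $(x_0,k)$. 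The one subtlety is that this last bound must be \emph{uniform in} $h$ (since $c_\ell h$ varies): this requires a uniform-boundedness argument over the family of curves $\{t\mapsto (t, \eta) : \eta \in [-\eta_0,\eta_0]\}$, which is exactly the kind of statement that follows from \Cref{curvelemma} together with a contradiction argument (stitching together countably many such curves for a sequence $h_n \to 0$ if boundedness failed), analogous to the proof of \Cref{lem:error} itself.

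\textbf{Main obstacle.} The delicate point is precisely this uniformity in the auxiliary parameter $h$: \Cref{lem:error} gives an $O_I$ bound whose constant a priori depends only on $I$, which is fine, but the passage to $\de^{m-1}_{\on{eq}} g$ multiplies the error by $k^{-(m-1)}$, so one must tie $h$ to $k$ correctly, and then one needs the family $\{f(\cdot,\eta)\}_\eta$ to have its order-$(m-1)$ difference quotients bounded \emph{uniformly in} $\eta$ near $0$ on compacta — this does not follow formally from $f\o c \in \cZ^{m,1}$ curve by curve, and must be obtained by a fast-converging-sequence-of-curves argument via \Cref{curvelemma}. Once that uniform bound is in hand, assembling the pieces is routine. (This same machinery will presumably also service Claim (iii), with $\de^{m+1}_{\on{eq}}$ in place of $\de^{m-1}_{\on{eq}}$ and the extra factor $h$ absorbing one order.)
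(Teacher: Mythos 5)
Your route is genuinely different from the paper's, which disposes of Claim (ii) in one line: since $f\o c\in\cZ^{m,1}(\R)\subseteq\cC^{m-1,1}(\R)$ for every smooth curve $c$, the Lipschitz result \Cref{thm:main2} (proved independently in \Cref{sec:proofmain2}, and explicitly assumed at this stage) gives $g=\p_2 f(\cdot,0)\in\cC^{m-2,1}(\R)$ for $m\ge2$, whence \Cref{thm:Lchar} yields the local boundedness of $\de^{m-1}_{\on{eq}}g$; for $m=1$ the claim is just Claim (i). Your plan instead tries to extract the bound directly from \Cref{lem:error}.

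Unfortunately, as written it has a genuine gap in the main term. After dividing the identity $h\,g(x)=\mathbf{A}_m(f)(x,h)+O_I(|h|^{m+1})$ by $h$ and applying $\de^{m-1}_{\on{eq}}(\,\cdot\,)(x_0;k)$ in $x$, the principal contribution is $\tfrac1h\sum_\ell\lambda_\ell\,\de^{m-1}_{\on{eq}}\big(f(\cdot,c_\ell h)\big)(x_0;k)$. You bound each $\de^{m-1}_{\on{eq}}\big(f(\cdot,c_\ell h)\big)(x_0;k)$ by a constant (uniformly in $h$, via a curve-lemma argument --- that part is fine), but this only makes the main term $O(1/h)$; with your choice $h=k$ that is $O(1/k)$, which is \emph{not} locally bounded as $k\to0$. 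No other coupling helps for $m\ge2$: the error term $O_I(|h|^m/|k|^{m-1})$ forces $|h|\le C|k|^{(m-1)/m}\to0$, while the main term needs $|h|$ bounded below. To rescue the argument you must exploit the cancellation $\sum_\ell\lambda_\ell=0$ (which holds since $\sum_j a_j=0$ in \Cref{lem:firstder}) and prove that $\eta\mapsto\de^{m-1}_{\on{eq}}\big(f(\cdot,\eta)\big)(x_0;k)$ is Lipschitz in $\eta$ uniformly in $(x_0,k)$, i.e.\ a bound on the \emph{mixed} difference quotient of type $(m-1,1)$. That does not follow from the slice-wise bounds you establish (pure $x$-difference quotients of $f(\cdot,\eta)$, uniformly in $\eta$); obtaining it from the hypothesis requires testing along tilted curves and polarizing, which essentially amounts to proving $f\in\cC^{m-1,1}(\R^2)$ --- exactly the content of \Cref{thm:main2} that the paper invokes outright. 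Either supply that mixed estimate, or, more economically, quote \Cref{thm:main2} as the paper does.
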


By \eqref{eq:inclusions}, we have
$f \o c \in \cC^{m-1,1}(\R)$ for all $c \in \cC^\infty(\R,\R^2)$.
By \Cref{thm:main2}, $g \in \cC^{m-2,1}(\R)$
   if $m \ge 2$ and thus $\de^{m-1}_{\on{eq}} g(x;h)$ is locally bounded in $(x,h) \in \R \times (\R\setminus \{0\})$, in view of \Cref{thm:Lchar}.
   If $m = 1$ this is trivially true by Claim (i).

\begin{claim*}[iii]
  $h \de^{m+1}_{\on{eq}} g(x;h)$ is locally bounded in $(x,h) \in \R \times (\R\setminus \{0\})$.
\end{claim*}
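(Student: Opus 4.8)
The plan is to prove Claim~(iii) by a curve‑lemma argument modelled on the proof of \Cref{lem:error}: a hypothetical failure of the Zygmund‑type bound for $g:=\p_2 f(\cdot,0)$ along the first axis is converted into a failure of the corresponding bound for $(f\o c)'\in\cZ^{m-1,1}(\R)$ for one cleverly assembled smooth curve $c$, which is impossible by the description of Zygmund classes via finite differences of the \emph{sharp} order $m+2$ (\Cref{thm:Krantz}, \Cref{thm:Zchar}). Concretely, since Claim~(iii) is local I assume for contradiction that there are $x_n,h_n\ne 0$ such that $x_n, x_n+h_n,\dots,x_n+(m+1)h_n$ all lie in a fixed bounded interval and $|\Delta^{m+1}_{h_n}g(x_n)|/|h_n|^m\to\infty$. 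Continuity of $g$ (Claim~(i)) forces $h_n\to 0$ along a subsequence — if $h_n$ stayed bounded away from $0$, then $\Delta^{m+1}_{h_n}g(x_n)$ would be bounded by continuity. After translating so that $x_n\to 0$ and passing to a further subsequence I may assume $|x_n|\le 4^{-n}$, $|h_n|\le 4^{-n}$, and $|\Delta^{m+1}_{h_n}g(x_n)|\ge q_n|h_n|^m$ with $q_n\to\infty$.

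Next I build the curve. Put $\mu_n:=4^{-n}$ and, for $i=0,1,\dots,m+1$, consider the affine curves
\[
  \gamma_{n,i}(t):=\bigl(x_n+ih_n,\ \mu_n t\bigr)\in\cC^\infty(\R,\R^2).
\]
Because $|x_n|,|h_n|,\mu_n\le 4^{-n}$ and $\gamma_{n,i}^{(k)}$ vanishes for $k\ge 2$, the family $(\gamma_{n,i})_{n,i}$, enumerated block by block in $n$, converges fast to $0$ in the sense of \Cref{curvelemma}. I choose the common domain radius $s_n>0$ of the pieces of block $n$ so small that $\sum_{n}(m+2)s_n<\infty$ (possible since $|h_n|$ is summable) and so small that the spacing $\rho_n$ with which \Cref{curvelemma} places these $m+2$ equal‑size pieces — its construction spacing consecutive pieces of equal domain size uniformly, so $\rho_n$ is comparable to $s_n$, in particular $\rho_n\to 0$ — satisfies $\rho_n\le\mu_n^{1/m}|h_n|$. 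Then \Cref{curvelemma} yields $c\in\cC^\infty(\R,\R^2)$ and times $t_{n,i}=t_{n,0}+i\rho_n$ with $c(t+t_{n,i})=\gamma_{n,i}(t)$ for $|t|\le s_n$. Near $t_{n,i}$ the first coordinate of $c$ is constantly $x_n+ih_n$, hence $(f\o c)(t)=f(x_n+ih_n,\mu_n(t-t_{n,i}))$ and $(f\o c)'(t_{n,i})=\mu_n\,\p_2 f(x_n+ih_n,0)=\mu_n\,g(x_n+ih_n)$, so that
\[
  \Delta^{m+1}_{\rho_n}(f\o c)'(t_{n,0})=\sum_{i=0}^{m+1}(-1)^{m+1-i}\binom{m+1}{i}(f\o c)'(t_{n,i})=\mu_n\,\Delta^{m+1}_{h_n}g(x_n).
\]

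The contradiction is now immediate. By hypothesis $f\o c\in\cZ^{m,1}(\R)$, hence $(f\o c)'\in\cZ^{m-1,1}(\R)$. Fixing a bounded interval $J$ containing the convergent (hence bounded) sequence $(t_{n,i})$, the Zygmund extension theorem \cite[Section~14]{Krantz83} and \Cref{thm:Krantz} (with $s=m$ and difference order $m+1$) give a constant $C$ with $|\Delta^{m+1}_\sigma(f\o c)'(t)|\le C|\sigma|^m$ for all $t,\sigma$ with $t,\dots,t+(m+1)\sigma\in J$ — in particular for $t=t_{n,0}$, $\sigma=\rho_n$. Together with the displayed identity and $\rho_n^m\le\mu_n|h_n|^m$ this yields
\[
  q_n|h_n|^m\le|\Delta^{m+1}_{h_n}g(x_n)|=\frac{1}{\mu_n}\bigl|\Delta^{m+1}_{\rho_n}(f\o c)'(t_{n,0})\bigr|\le\frac{C}{\mu_n}\rho_n^m\le C|h_n|^m,
\]
so $q_n\le C$ for all large $n$, a contradiction. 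Hence $h\de^{m+1}_{\on{eq}}g(x;h)$ is locally bounded, which is Claim~(iii); combined with Claims~(i) and~(ii) and \Cref{thm:Zchar} this shows $g\in\cZ^{m-1,1}(\R)$ and completes the proof of \Cref{lem:inductionstep}.

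The delicate point is the curve construction in the second paragraph. One has to verify that $(\gamma_{n,i})$ really converges fast to $0$ under the chosen enumeration — here the indices $|x_n|,|h_n|,\mu_n$ all decaying like $4^{-n}$ is what does it — and, more importantly, that \Cref{curvelemma} can be used so as to place the $m+2$ pieces of each block at \emph{equally spaced} times with a spacing $\rho_n$ that is both comparable to $s_n$ (forced by the construction) and bounded by $\mu_n^{1/m}|h_n|$ (needed for the final estimate); this is the reason for the constraints linking $\mu_n$, $s_n$, $\rho_n$ and $h_n$. Note that the transition parts of $c$ between consecutive pieces necessarily carry very large derivatives, but this is harmless: only the input curves $\gamma_{n,i}$, not $c$, have to converge fast, and $f\o c\in\cZ^{m,1}(\R)$ holds by hypothesis for \emph{every} smooth $c$. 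Finally, using the order $m+2$ rather than $m+1$ in the difference characterization is exactly what absorbs the logarithmic slack of the Zygmund scale: a generic $\cZ^{m-1,1}$ function only obeys a $\Delta^m$‑bound up to a logarithm, whereas the bound $|\Delta^{m+1}_\sigma(f\o c)'(t)|\le C|\sigma|^m$ used above is clean.
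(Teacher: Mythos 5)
Your construction cannot be carried out: the curve $c$ you need does not exist, so the contradiction never gets off the ground. The two constraints you impose on the spacing $\rho_n=t_{n,i+1}-t_{n,i}$ are incompatible with smoothness of $c$. On the piece around $t_{n,i}$ the first coordinate of $c$ is constantly $x_n+ih_n$, and on the piece around $t_{n,i+1}$ it is constantly $x_n+(i+1)h_n$; hence on the gap between them (of length at most $\rho_n$) the first coordinate must change by $h_n$, so by the mean value theorem $\sup|c_1'|\ge h_n/\rho_n$. Your requirement $\rho_n\le \mu_n^{1/m}|h_n|$ forces $h_n/\rho_n\ge \mu_n^{-1/m}=4^{n/m}\to\infty$, and since the $t_{n,i}$ accumulate at some $t_\infty$, the derivative $c_1'$ is unbounded on every neighborhood of $t_\infty$: $c$ is not even $\cC^1$ there. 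Your remark that the large transition derivatives are ``harmless'' because only the input curves must converge fast is exactly where the argument breaks: the \emph{output} of \Cref{curvelemma} must be a globally smooth curve, and the fast convergence of the $c_n$ is precisely the hypothesis that makes small transition derivatives possible; when the geometry forces the transitions to be steep, no smooth assembled curve exists. Note also that $\mu_n\to0$ faster than any polynomial is unavoidable (otherwise $(\gamma_{n,i})$ does not converge fast to $0$), so there is no choice of parameters that rescues the estimate. Independently of this, \Cref{curvelemma} gives no control whatsoever over the placement times $t_n$ beyond convergence, so even the exact arithmetic progression $t_{n,i}=t_{n,0}+i\rho_n$ — which you need in order for your sum to be the finite difference $\De^{m+1}_{\rho_n}(f\o c)'(t_{n,0})$ — is not something the lemma provides.

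The paper avoids this obstruction by never trying to read off $g=\p_2f(\cdot,0)$ from derivatives of $f\o c$ along vertical segments. Instead, \Cref{lem:firstder} and \Cref{lem:error} approximate $hg(x)$ to order $O(|h|^{m+1})$ by $\mathbf{A}_m(f)(x,h)$, a fixed finite linear combination of \emph{values} $f(x,jh)$, and a single diagonal curve $c_n(t)=(x_n-h_n+2^{-n}t,\,2^{-n}t)$ per index $n$ visits all the points $(x_n+(i-1)h_n,\,ih_n)$ at equally spaced parameter values within one piece, so equal spacing is automatic and both coordinates move at the same tame speed $2^{-n}$. The quantity $h_n\de^{m+1}_{\on{eq}}g(x_n;h_n)$ is then recovered from $2^nh_n\,\de^{m+2}_{\on{eq}}(f_1\o c)(t_n;2^nh_n)$ with $f_1(x,h)=hg(x)$, split as $f_1=f_2+\mathbf{A}_m(f)$: the $\mathbf{A}_m(f)\o c$ part is controlled because it is a combination of composites of $f$ with smooth curves (hence in $\cZ^{m,1}$ by hypothesis, via \Cref{thm:Zchar}), and the $f_2$ part is controlled pointwise by \Cref{lem:error}. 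If you want to salvage your idea, you would have to replace the family of vertical pieces by a single curve through all the sample points with bounded speed ratio — which is, in effect, the paper's proof.
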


   Suppose, for contradiction, that Claim (iii) is not true, say for $x$ in a neighborhood of $0$.
   Then there exist $x_n$ and $h_n$ with $|x_n| \le 4^{-n}$ and $0<h_n < 4^{-n}$ (if necessary replace $f(x,y)$ by $f(-x,y)$) such that
   \[
    |h_n \de^{m+1}_{\text{eq}} g(x_n;h_n)| \ge n 2^{n(m+1)}.
   \]
   Let $c_n(t) := (x_n - h_n + 2^{-n}t,2^{-n}t)$ and $s_n := (m+2)2^{-n}$.
   \Cref{curvelemma} gives a $\cC^\infty$-curve $c : \R \to \R^2$
   and a convergent sequence of reals $t_n$
   with $c(t+t_n) = c_n(t)$ for $|t|\le s_n$.
   Set
   \begin{align*}
      f_1(x,h) &:= h g(x),
      \\
      f_2(x,h) &:= h g(x)-\mathbf{A}_m(f)(x,h).
   \end{align*}
   Then the sequence
   \begin{align} \label{eq:Tn}
      T_n :&= 2^n h_n \de^{m+2}_{\on{eq}} (f_1 \o c)(t_n;2^n h_n)
      \\
      &=   \frac{2^n h_n}{(2^n h_n)^{m+2}} \sum_{i=0}^{m+2} (-1)^{m+2-i} \binom{m+2}{i} f_1(x_n -h_n + i h_n, i h_n)
      \notag  \\
      &=   \frac{1}{(2^n h_n)^{m+1}} \sum_{i=0}^{m+2} (-1)^{m+2-i} \binom{m+2}{i} ih_n g(x_n + (i-1) h_n)
      \notag \\
      &=  \frac{1}{2^{n(m+1)}} \frac{h_n}{h_n^{m+1}} \sum_{i=1}^{m+2} (-1)^{m+2-i} \binom{m+2}{i} i g(x_n + (i-1) h_n)
      \notag \\
      &=  \frac{m+2}{2^{n(m+1)}} \frac{h_n}{h_n^{m+1}} \sum_{j=0}^{m+1} (-1)^{m+1-j} \binom{m+1}{j}  g(x_n + j h_n)
      \notag \\
      &=  \frac{m+2}{2^{n(m+1)}} h_n \de^{m+1}_{\on{eq}} g(x_n;h_n)
      \notag
   \end{align}
   satisfies $|T_n| \ge  (m+2) n$.
   On the other hand,
   $\textbf{A}_m(f) \o c \in \cZ^{m,1}(\R)$, by the assumption on $f$,
   so that $2^n h_n \de^{m+2}_{\on{eq}} (\textbf{A}_m(f) \o c)(t_n;2^n h_n)$ is bounded, by \Cref{thm:Zchar}.
   By \Cref{lem:error}, there is $C>0$ such that $|f_2(x,h)| \le C |h|^{m+1}$ for all $x,h \in [-(m+2),m+2]$ which implies that
  \begin{align*}
     2^n h_n |\de^{m+2}_{\on{eq}} (f_2 \o c)(t_n;2^n h_n)|
     &\le
     \frac{2^n h_n}{(2^n h_n)^{m+2}} \sum_{i=0}^{m+2}\binom{m+2}{i} |f_2(x_n -h_n + i h_n, i h_n)|
     \\
     &\le C
     \Big(\frac{m+2}{2^n} \Big)^{m+1} \sum_{i=0}^{m+2}\binom{m+2}{i}.
  \end{align*}
  Consequently, $T_n$ is bounded, a contradiction.
  Thus, Claim (iii) is shown and the proof of \Cref{lem:inductionstep} is complete.

\subsection{Proof of \Cref{thm:main2}} \label{sec:proofmain2}

We will indicate how to show

\begin{proposition} \label{lem:inductionstepHL}
  Let $m \in \N$ and $\al \in (0,1]$.
  Let $f : \R^2 \to \R$ be such that $f \o c \in \cC^{m+1,\al}(\R)$ for all $c \in \cC^\infty(\R,\R^2)$.
  Then $\p_2f(\cdot,0) \in \cC^{m,\al}(\R)$.
\end{proposition}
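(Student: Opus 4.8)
The plan is to transcribe the proof of \Cref{lem:inductionstep}, replacing each Zygmund-specific ingredient by its H\"older--Lipschitz counterpart. Put $g:=\p_2 f(\cdot,0)$, which is well defined because $s\mapsto f(x,s)$ equals $f$ composed with the smooth curve $s\mapsto(x,s)$ and hence lies in $\cC^{m+1,\al}(\R)\subseteq\cC^1(\R)$. By \Cref{thm:Hchar} (for $\al<1$; by \Cref{thm:Lchar} for $\al=1$) it suffices to establish: (i) $g$ is continuous; (ii) $(x,h)\mapsto\de^{m}_{\on{eq}}g(x;h)$ is locally bounded; and (iii) $(x,h)\mapsto|h|^{1-\al}\de^{m+1}_{\on{eq}}g(x;h)$ is locally bounded. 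For $\al=1$ the weight in (iii) disappears and, by \Cref{cor:bd}, (ii) becomes a consequence of (iii).

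In place of the Zygmund Taylor formula \Cref{lem:Taylor} one uses the classical Taylor expansion with H\"older remainder: $\varphi(x+h)-\sum_{j=0}^{m+1}\frac{\varphi^{(j)}(x)}{j!}h^{j}=O_I(|h|^{m+1+\al})$ for $\varphi\in\cC^{m+1,\al}(\R)$ and $I$ a bounded interval. Since $f\o c\in\cC^{m+1,\al}(\R)$ carries a genuine $(m{+}1)$-st derivative, the second-difference trick used to define $D_m$ and $\mathbf{D}_m$ is unnecessary: one simply picks constants $a_0,\dots,a_{m+1}\in\R$ solving the Vandermonde system $\sum_{j}a_j j=1$, $\sum_{j}a_j j^{i}=0$ for $i\in\{0,2,\dots,m+1\}$, and sets $\mathbf{A}(f)(x,h):=\sum_{j=0}^{m+1}a_j f(x,jh)$. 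Applying the Taylor estimate to the slices $h\mapsto f(x,h)$ gives the analogue of \Cref{lem:firstder}, $h\,\p_2 f(x,0)-\mathbf{A}(f)(x,h)=O_I(|h|^{m+1+\al})$ for each fixed $x$; the corresponding uniform estimate $|h\,\p_2 f(x,0)-\mathbf{A}(f)(x,h)|\le C|h|^{m+1+\al}$ for all $x,h$ in a compact interval (the analogue of \Cref{lem:error}) then follows by the same \Cref{curvelemma}-contradiction, which in the degenerate case $0\notin I$ reduces to boundedness of $x\mapsto\p_2 f(x,0)$ and otherwise contradicts the slicewise estimate.

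With these replacements Claim (iii) is proved verbatim as in \Cref{sec:proofkey}. If it fails near $0$, choose $x_n,h_n$ with $|x_n|\le4^{-n}$, $0<h_n<4^{-n}$, and $|h_n|^{1-\al}|\de^{m+1}_{\on{eq}}g(x_n;h_n)|\ge n\,2^{n(m+1+\al)}$; take $c_n(t):=(x_n-h_n+2^{-n}t,\,2^{-n}t)$, $s_n:=(m+2)2^{-n}$, apply \Cref{curvelemma}, and set $f_1(x,h):=h\,g(x)$, $f_2(x,h):=h\,g(x)-\mathbf{A}(f)(x,h)$. The algebraic identity of \eqref{eq:Tn} gives
\[
  T_n:=|2^n h_n|^{1-\al}\,\de^{m+2}_{\on{eq}}(f_1\o c)(t_n;2^n h_n)
      =(m+2)\,2^{-n(m+1+\al)}\,|h_n|^{1-\al}\,\de^{m+1}_{\on{eq}}g(x_n;h_n),
\]
so $|T_n|\ge(m+2)n$. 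Writing $f_1=\mathbf{A}(f)+f_2$, the $\mathbf{A}(f)$-contribution is bounded because $\mathbf{A}(f)\o c\in\cC^{m+1,\al}(\R)$ makes $|h|^{1-\al}\de^{m+2}_{\on{eq}}(\mathbf{A}(f)\o c)$ locally bounded (by \Cref{thm:Hchar}, resp.\ \Cref{thm:Lchar}), while the $f_2$-contribution is, by $|f_2(x,h)|\le C|h|^{m+1+\al}$, bounded by a constant times $2^{-n(m+1+\al)}$ and hence tends to $0$; the extra weight $|h|^{1-\al}$ is exactly what absorbs the extra factor $|h|^{\al}$ relative to the Zygmund bookkeeping. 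This contradicts $|T_n|\ge(m+2)n$.

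For Claims (i) and (ii) with $m\ge1$ one argues as in \Cref{sec:proofkey}: from $f\o c\in\cC^{m+1,\al}(\R)\subseteq\cC^{m,1}(\R)$ and \Cref{thm:main2} at level $m$ with exponent $1$ (available from the induction on $m$ that proves \Cref{thm:main2}) one gets $g\in\cC^{m-1,1}(\R)$, hence continuity and, via \Cref{thm:Lchar}, local boundedness of $\de^{m}_{\on{eq}}g$. The one genuinely delicate point --- which I expect to be the main obstacle --- is the base case $m=0$, where \Cref{thm:main2} at level $1$ is not yet available: here continuity of $g$ in Claim (i) must be obtained differently, e.g.\ from the classical $\cC^1$-version of Boman's theorem applied to $f\o c\in\cC^{1,\al}(\R)\subseteq\cC^1(\R)$, or from the locally uniform convergence of $h^{-1}(f(x,h)-f(x,0))$ to $g(x)$ (itself a consequence of \Cref{curvelemma}); Claim (ii) for $m=0$ is then immediate from (i). Beyond this and the routine verification that the exponents in (iii) line up, the argument introduces nothing new, and the case $\al=1$ is slightly shorter since \Cref{thm:Lchar} carries a single, unweighted difference-quotient condition.
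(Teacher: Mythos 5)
Your proposal is correct and follows the paper's own route essentially verbatim: the same linear combination $\sum_j a_j f(x,jh)$ approximating $h\,\p_2 f(x,0)$ to order $|h|^{m+1+\al}$ via the classical Taylor formula with H\"older remainder (the paper's $\mathbf{B}_m$; your indexing up to $m+1$ is the consistent one), the same curve-lemma contradiction for Claim (iii) with the weight $|h|^{1-\al}$, and the same reduction of Claims (i) and (ii) to lower-order cases. The only place you diverge is the base case $m=0$, which you flag as the main obstacle: the paper dispatches it by noting that for $m=0$ the characterization of $\cC^{0,\al}$ requires no a priori continuity --- local boundedness of $|h|^{-\al}(g(x+h)-g(x))$ already yields $g \in \cC^{0,\al}$ --- so Claim (iii) alone suffices there and your proposed workarounds (of which the locally uniform convergence of difference quotients is the safer one) are unnecessary.
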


Then it is easy to finish the proof of \Cref{thm:main2}
using a variant of \Cref{prop:key4} as well as \Cref{lem:C0omHL} and \Cref{prop:composition}.

We replace \Cref{lem:Taylor} by
the following easy consequence of Taylor's formula.

\begin{lemma} \label{lem:TaylorHL}
  Let $m \in \N$, $\al \in (0,1]$, and $f \in C^{m,\al}(\R)$. Then, for each bounded interval $I \subseteq \R$,
  \[
    f(x+h) =  \sum_{j=0}^m \frac{f^{(j)}(x)}{j!} h^j + O_I(|h|^{m+\al}).
  \]
\end{lemma}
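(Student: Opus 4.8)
The plan is to read the estimate off Taylor's formula with integral remainder, using only that $f^{(m)}$ is H\"older continuous to control the error. The case $m=0$ is immediate from the definition of $\cC^{0,\al}(\R)$: for any bounded interval $I$ there is a constant $C>0$ with $|f(x+h)-f(x)| \le C\,|h|^\al$ whenever $x,x+h$ lie in a fixed compact neighborhood of $\ol I$, which is precisely $f(x+h)=f(x)+O_I(|h|^\al)$. So from now on I assume $m \ge 1$. First I would fix a compact interval $K$ containing $\ol I$ in its interior and restrict attention to those $h$ small enough that $x+th \in K$ for all $x \in I$ and all $t \in [0,1]$. Since $f^{(m)} \in \cC^{0,\al}(\R)$, there is a constant $C>0$, depending only on $K$ and hence on $I$, with $|f^{(m)}(y)-f^{(m)}(z)| \le C\,|y-z|^\al$ for all $y,z \in K$.

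Next I would invoke Taylor's formula to order $m$ with integral remainder, valid because $f \in \cC^m(\R)$:
\[
  f(x+h) = \sum_{j=0}^{m-1} \frac{f^{(j)}(x)}{j!}\,h^j + \frac{1}{(m-1)!}\int_0^h (h-t)^{m-1} f^{(m)}(x+t)\,dt.
\]
Using the elementary identity $\frac{h^m}{m!} = \frac{1}{(m-1)!}\int_0^h (h-t)^{m-1}\,dt$ to rewrite the degree-$m$ term of the target Taylor polynomial, I can absorb it into the integral and obtain
\[
  f(x+h) - \sum_{j=0}^{m} \frac{f^{(j)}(x)}{j!}\,h^j = \frac{1}{(m-1)!}\int_0^h (h-t)^{m-1}\big(f^{(m)}(x+t)-f^{(m)}(x)\big)\,dt.
\]

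Finally I would estimate the right-hand side. Taking $h>0$ (the case $h<0$ is symmetric), the H\"older bound together with the substitution $t=hs$ gives
\[
  \Big|\int_0^h (h-t)^{m-1}\big(f^{(m)}(x+t)-f^{(m)}(x)\big)\,dt\Big| \le C\,h^{m+\al}\int_0^1 (1-s)^{m-1} s^\al\,ds,
\]
and the remaining integral is a finite constant (a value of the Beta function) independent of $x$ and $h$. Hence the remainder is $O_I(|h|^{m+\al})$, as claimed. The only point needing any care — and it is the mildest of obstacles — is that the H\"older constant $C$ must be uniform as $x$ ranges over $I$; this is exactly what the choice of the compact set $K$ at the outset, together with the shrinking of the admissible range of $h$, secures.
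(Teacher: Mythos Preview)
Your proof is correct and is exactly what the paper intends: it merely states the lemma as ``an easy consequence of Taylor's formula'' without spelling out the details. Your use of the integral remainder and the H\"older bound on $f^{(m)}$ over a compact $K \supseteq \ol I$ is the natural way to carry this out, and it matches the paper's $O_I$ convention (uniform in $x \in I$, for sufficiently small $h$).
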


As in \Cref{lem:firstder} we conclude:
If $b_0,\ldots, b_m \in\ \R$ is the solution of the system
\begin{align*}
   \sum_{j=0}^{m}b_j j &= 1,
   \\
   \sum_{j=0}^{m}b_j j^{i} &= 0, \quad 0\le i \le m,~ i \ne 1,
\end{align*}
then for all $f \in \cC^{m,\al}(\R)$, $m\ge 1$, and for each bounded interval $I \subseteq \R$,
\begin{equation} \label{eq:firstderHL}
  h f'(x) =  \sum_{j=0}^{m}  b_j f(x+jh)  + O_I(|h|^{m+\al}).
\end{equation}

Now suppose that $m\in \N$, $\al \in (0,1]$, and $f : \R^2 \to \R$ is such that $f\o c \in \cC^{m+1,\al}(\R)$ for all $c \in \cC^\infty(\R,\R^2)$.
Then we infer from \eqref{eq:firstderHL}, in analogy to \Cref{lem:error}, that on each compact interval $I$ there is $C>0$ such that
\begin{equation} \label{eq:errorHL}
  |h g(x) - \mathbf{B}_m(f)(x,h)| \le C |h|^{m+1+\al}, \quad x,h \in I,
\end{equation}
where $g := \p_2 f(\cdot,0)$ and $\mathbf{B}_m(f)(x,h) := \sum_{j=0}^{m}  b_j f(x,jh)$.

To complete the proof of \Cref{lem:inductionstepHL} we have to show that $g \in \cC^{m,\al}(\R)$.
By \Cref{thm:Hchar}, it is enough to check the following three claims.

\begin{claim*}[I]
  $g$ is continuous.
\end{claim*}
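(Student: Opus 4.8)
The plan is to deduce Claim~(I) from the uniform approximation \eqref{eq:errorHL}: I will observe that it exhibits $g=\p_2 f(\cdot,0)$ as a \emph{locally uniform} limit of continuous functions, and that such a limit is continuous.

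First I would note that $g$ is well defined: for fixed $x$ the map $s\mapsto f(x,s)$ agrees near $0$ with $f\o c_x$, where $c_x(s):=(x,\vh(s))$ with $\vh\in\cC^\infty(\R)$ equal to the identity near $0$; hence $f\o c_x\in\cC^{m+1,\al}(\R)\subseteq\cC^1(\R)$ and $g(x)=(f\o c_x)'(0)$. Next, for each fixed $h$ the map $x\mapsto f(x,h)$ equals $f\o\ell_h$ with the affine curve $\ell_h(x):=(x,h)$, so it lies in $\cC^{m+1,\al}(\R)\subseteq\cC^0(\R)$; in particular $x\mapsto\tfrac1h\mathbf B_m(f)(x,h)$, being a fixed finite linear combination of the functions $x\mapsto f(x,jh)$, is continuous on $\R$ for every $h\ne0$. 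Dividing \eqref{eq:errorHL} by $|h|$ yields, on every compact interval $I$ containing $0$, a constant $C>0$ with $\big|g(x)-\tfrac1h\mathbf B_m(f)(x,h)\big|\le C|h|^{m+\al}$ for all $x,h\in I$ with $h\ne0$. Since $m+\al>0$, the functions $x\mapsto\tfrac1h\mathbf B_m(f)(x,h)$ converge to $g$ uniformly on $I$ as $h\to0$, so $g|_I$, as a uniform limit of continuous functions, is continuous; as $I$ was arbitrary, $g\in\cC^0(\R)$. (Equivalently: for $x_0\in\R$ and $x_n\to x_0$ one fixes a small $h\ne0$, writes $|g(x_n)-g(x_0)|\le\tfrac1{|h|}\big|\mathbf B_m(f)(x_n,h)-\mathbf B_m(f)(x_0,h)\big|+2C|h|^{m+\al}$, lets $n\to\infty$ and then $h\to0$.)

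In this argument the only substantive input is \eqref{eq:errorHL} itself, which is already available; its proof is the genuinely nontrivial ingredient and is carried out ``in analogy to \Cref{lem:error}'' via the general curve lemma \Cref{curvelemma} applied to the curves $c_n(t):=(x_n,2^{-n}t)$, combined with the one-variable expansion \eqref{eq:firstderHL} for the $\cC^{m+1,\al}$-function $f\o c$. The delicate point there — and hence the obstacle lurking behind Claim~(I) — is the bookkeeping in that contradiction argument: one must take the test constants in the negation of \eqref{eq:errorHL} of size $\sim 4^{n(m+1+\al)}$ so that the $2^{n}$-rescaling built into $c_n(t)=(x_n,2^{-n}t)$ is absorbed and a genuine contradiction with \eqref{eq:firstderHL} results. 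Finally, when $m=0$ one reads \eqref{eq:errorHL} as the uniform first-order Taylor estimate $|f(x,h)-f(x,0)-h\,g(x)|\le C|h|^{1+\al}$ (obtained the same way, now using \Cref{lem:TaylorHL}), and the preceding paragraph applies verbatim with $\tfrac1h\mathbf B_m(f)(x,h)$ replaced by $\tfrac1h\big(f(x,h)-f(x,0)\big)$.
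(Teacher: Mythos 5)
Your proof is correct, but it reaches Claim~(I) by a genuinely different route than the paper. The paper's argument is a bootstrap: it observes that for $m=0$ the characterization in \Cref{thm:Hchar} reduces to Claim~(III) alone and does not presuppose continuity of $g$ (local boundedness of $|h|^{-\al}(g(x+h)-g(x))$ \emph{is} the statement $g\in\cC^{0,\al}$), so one first finishes \Cref{lem:inductionstepHL} for $m=0$ and then, since $f\o c\in\cC^{m+1,\al}(\R)\subseteq\cC^{1,\al}(\R)$ for every smooth $c$, applies that case to conclude $g=\p_2 f(\cdot,0)\in\cC^{0,\al}$ locally, hence continuous. You instead read continuity directly off \eqref{eq:errorHL}: dividing by $|h|$ exhibits $g$ as a locally uniform limit, as $h\to0$, of the functions $x\mapsto h^{-1}\mathbf{B}_m(f)(x,h)$, each continuous because $x\mapsto f(x,jh)$ is $f$ composed with an affine, hence smooth, curve. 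Both arguments ultimately rest on \eqref{eq:errorHL}, which is established before the claims and whose derivation nowhere uses continuity of $g$, so there is no circularity in either. Your version is the more economical one for Claim~(I) in isolation --- it avoids a second pass through the curve-lemma contradiction argument --- whereas the paper's detour through $m=0$ yields the stronger local $\al$-H\"older conclusion at that stage and keeps the structure parallel to Claims~(II) and~(III). (One cosmetic point in your aside on the proof of \eqref{eq:errorHL}: the natural choice of test constants is $n\,2^{n(m+1+\al)}$, matching the $2^{-n}$-rescaling in $c_n(t)=(x_n,2^{-n}t)$; your $4^{n(m+1+\al)}$ still produces a contradiction but is not what the bookkeeping forces.)
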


To see that $g$ is continuous it suffices to finish the proof of \Cref{lem:inductionstepHL} in the case $m=0$.
\Cref{thm:Hchar} is trivial for $m=0$ and holds without the a priori assumption that $g$ is continuous:
local boundedness of
$|h|^{1-\al} \de^{1}_{\on{eq}} g(x;h) =  |h|^{-\al} (g(x+h)-g(x))$ in $x$ and $h$ is equivalent to $g \in \cC^{0,\al}$.
That means for $m=0$ only Claim (III) must be shown.

\begin{claim*}[II]
    $\de^{m}_{\on{eq}} g(x;h)$ is locally bounded in $(x,h) \in \R \times (\R\setminus \{0\})$.
\end{claim*}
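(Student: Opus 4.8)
The plan is to derive Claim (II) from \Cref{thm:main2} at order $m$ (with $\al=1$), exactly as Claim (ii) was handled in \Cref{sec:proofkey}. Here one must keep track of the induction underlying the proof of \Cref{thm:main2}: \Cref{lem:inductionstepHL} at order $m$ enters only into the proof of \Cref{thm:main2} at order $m+1$, so \Cref{thm:main2} is already available at all orders $\le m$ (and all $\al\in(0,1]$) -- the chain being \Cref{lem:C0omHL} (order $0$), then \Cref{lem:inductionstepHL} at order $0$, then \Cref{thm:main2} at order $1$, and so on. There is therefore no circularity in invoking it here.

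Concretely, I would first reduce the order: since $\al\le 1$, the inclusions \eqref{eq:inclusions} give $\cC^{m+1,\al}(\R)\subseteq\cC^{m+1}(\R)\subseteq\cC^{m,1}(\R)$, so the hypothesis $f\o c\in\cC^{m+1,\al}(\R)$ for all $c\in\cC^\infty(\R,\R^2)$ yields $f\o c\in\cC^{m,1}(\R)$ for all such $c$. Assuming $m\ge 1$, \Cref{thm:main2} applied with $m$ in place of $m+1$ and $\al=1$ then gives $f\in\cC^{m,1}(\R^2)$, whence $\p_2 f\in\cC^{m-1,1}(\R^2)$; restricting this function to the line $\{(x,0):x\in\R\}$ shows that $g=\p_2 f(\cdot,0)\in\cC^{m-1,1}(\R)$. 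Finally, by \Cref{thm:Lchar}, membership in $\cC^{m-1,1}(\R)$ is equivalent to local boundedness of $(x,h)\mapsto\de^{m}_{\on{eq}}g(x;h)$ on $\R\times(\R\setminus\{0\})$, which is Claim (II). For $m=0$ nothing has to be shown, since, as already noted, \Cref{thm:Hchar} at order $0$ does not involve $\de^0_{\on{eq}}g$ and only Claim (III) is needed there.

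I do not anticipate a genuine obstacle in Claim (II): beyond the induction bookkeeping just described, the ingredients are only the elementary inclusion and restriction facts together with \Cref{thm:main2} and \Cref{thm:Lchar}. The real work in \Cref{lem:inductionstepHL} is concentrated in Claim (III), which proceeds in parallel to Claim (iii): one argues by contradiction, using \eqref{eq:firstderHL} and \eqref{eq:errorHL} to replace $h\,g(x)$ by $\mathbf{B}_m(f)(x,h)$ modulo an error $O(|h|^{m+1+\al})$, and feeds a fast-converging family of curves into \Cref{curvelemma} to produce a single $c\in\cC^\infty(\R,\R^2)$ along which $f\o c\notin\cC^{m+1,\al}(\R)$, contradicting \Cref{thm:Hchar}.
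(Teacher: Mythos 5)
Your argument is correct and is essentially the paper's: both reduce via the inclusions \eqref{eq:inclusions} to the hypothesis $f\o c\in\cC^{m,1}(\R)$, invoke the already-available case of \Cref{thm:main2} (equivalently, \Cref{lem:inductionstepHL} at one order lower with $\al=1$) to get $g\in\cC^{m-1,1}(\R)$, and translate back through \Cref{thm:Lchar}; your explicit bookkeeping of the induction showing there is no circularity matches the intended structure.
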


By \eqref{eq:inclusions}, we have $f\o c \in \cC^{m,1}(\R)$ for all $c \in \cC^\infty(\R,\R^2)$.
Thus it suffices to finish the proof of \Cref{lem:inductionstepHL} in the case $\al = 1$.
Indeed, by \Cref{thm:Lchar}, Claim (II) holds if and only if $g \in \cC^{m-1,1}(\R)$
as $g$ is continuous by Claim (I).

\begin{claim*}[III]
  $|h|^{1-\al} |\de^{m+1}_{\text{eq}} g(x;h)|$  is locally bounded in $(x,h) \in \R \times (\R\setminus \{0\})$.
\end{claim*}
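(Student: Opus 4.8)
The plan is to repeat the argument of Claim (iii) above with the weight $h$ attached there to $\de^{m+2}_{\on{eq}}$ replaced throughout by the weight $|h|^{1-\al}$. In the Zygmund case the boundedness of $(x,h)\mapsto h\,\de^{m+2}_{\on{eq}}\psi(x;h)$ for $\psi\in\cZ^{m,1}(\R)$ was supplied by \Cref{thm:Zchar}; its replacement will be the elementary estimate $|\De^{m+2}_\eta\psi(x)|=O_I(|\eta|^{m+1+\al})$ for $\psi\in\cC^{m+1,\al}(\R)$ (integrate $\De^1_\eta\psi^{(m+1)}(\cdot)=O_I(|\eta|^\al)$ in $m+1$ directions, or invoke \Cref{thm:Krantz}), and the role of the Zygmund error bound will be played by \eqref{eq:errorHL}. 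We first note that it suffices to prove Claim (III) for every $m\in\N$ and $\al\in(0,1]$: as explained after Claims (I) and (II), once this is done the case $m=0$ of \Cref{lem:inductionstepHL} follows from the triviality of \Cref{thm:Hchar} at $m=0$, so that $g$ is continuous, and $g$ is locally bounded (this is part of \eqref{eq:errorHL} and the continuity of $f$). For $m=0$ one uses \eqref{eq:errorHL} with $\mathbf{B}_0(f)(x,h):=f(x,h)-f(x,0)$, for which $|h\,g(x)-\mathbf{B}_0(f)(x,h)|=O_I(|h|^{1+\al})$ by the same reasoning and $\mathbf{B}_0(f)\o c=(f\o c)-(f\o(c_1,0))\in\cC^{1,\al}(\R)$, writing $c=(c_1,c_2)$.

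We then argue by contradiction. If Claim (III) fails, then --- since $g$ is locally bounded, $\de^{m+1}_{\on{eq}}g(x;h)$ is bounded whenever $h$ stays away from $0$ with evaluation points in a fixed bounded set --- the obstruction must arise as $h\to0$; after a translation we may assume it occurs near $x=0$, and, passing to a subsequence and possibly replacing $f(x,y)$ by $f(-x,y)$, we obtain $x_n,h_n$ with $|x_n|\le4^{-n}$, $0<h_n<4^{-n}$, and $|h_n|^{1-\al}\,|\de^{m+1}_{\on{eq}}g(x_n;h_n)|\ge n\,2^{n(m+1+\al)}$. Applying \Cref{curvelemma} to the fast-to-$0$ curves $c_n(t):=(x_n-h_n+2^{-n}t,\,2^{-n}t)$ and $s_n:=(m+2)2^{-n}$ produces a $\cC^\infty$-curve $c$ and a convergent sequence $t_n$ with $c(t+t_n)=c_n(t)$ for $|t|\le s_n$. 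With $\eta:=2^nh_n$ (so $0<\eta<2^{-n}$ and $(m+2)\eta<s_n$) and $f_1(x,h):=h\,g(x)$, the same Leibniz-type computation as in \eqref{eq:Tn}, using $i\binom{m+2}{i}=(m+2)\binom{m+1}{i-1}$, gives
\[
\de^{m+2}_{\on{eq}}(f_1\o c)(t_n;\eta)=\frac{m+2}{2^{n(m+2)}}\,\de^{m+1}_{\on{eq}}g(x_n;h_n),
\]
so that $T_n:=|\eta|^{1-\al}\,\de^{m+2}_{\on{eq}}(f_1\o c)(t_n;\eta)=(m+2)\,2^{-n(m+1+\al)}\,|h_n|^{1-\al}\,\de^{m+1}_{\on{eq}}g(x_n;h_n)$ satisfies $|T_n|\ge(m+2)\,n$.

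For the contradiction we split $f_1=f_2+\mathbf{B}_m(f)$ with $f_2(x,h):=h\,g(x)-\mathbf{B}_m(f)(x,h)$ and use linearity of $\de^{m+2}_{\on{eq}}$. On the one hand $\mathbf{B}_m(f)\o c=\sum_{j=0}^m b_j\,\bigl(f\o(c_1,\,j\,c_2)\bigr)\in\cC^{m+1,\al}(\R)$ by the hypothesis on $f$, so the estimate quoted above shows that $|\eta|^{1-\al}\,|\de^{m+2}_{\on{eq}}(\mathbf{B}_m(f)\o c)(t_n;\eta)|=|\eta|^{1-\al}\,|\De^{m+2}_\eta(\mathbf{B}_m(f)\o c)(t_n)|/|\eta|^{m+2}$ stays bounded in $n$. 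On the other hand, by \eqref{eq:errorHL} there is $C>0$ with $|f_2(x,h)|\le C|h|^{m+1+\al}$ for $x,h$ in a fixed bounded interval; evaluating $f_2\o c$ at the points $(x_n+(i-1)h_n,\,ih_n)$, $i=0,\dots,m+2$, which lie in that interval for large $n$, yields $|\de^{m+2}_{\on{eq}}(f_2\o c)(t_n;\eta)|\le C'\,h_n^{m+1+\al}/\eta^{m+2}$, whence $|\eta|^{1-\al}\,|\de^{m+2}_{\on{eq}}(f_2\o c)(t_n;\eta)|\le C'\,2^{-n(m+1+\al)}\to0$. Thus $(T_n)$ is bounded, contradicting $|T_n|\ge(m+2)n$; this proves Claim (III), and together with Claims (I) and (II) it completes the proofs of \Cref{lem:inductionstepHL} and \Cref{thm:main2}.

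The main point requiring care is the bookkeeping of powers: the blow-up rate $n\,2^{n(m+1+\al)}$ in the negated statement must be chosen precisely so that the factor $2^{-n(m+2)}$ coming from the rescaling $\eta=2^nh_n$ and the gain $|\eta|^{1-\al}h_n^{\al-1}=2^{n(1-\al)}$ combine into $2^{-n(m+1+\al)}$ and cancel it; equivalently, the weight attached to $\de^{m+2}_{\on{eq}}$ has to be exactly $|h|^{1-\al}$, which is what is dictated on one side by the $\cC^{m+1,\al}$-regularity of $\mathbf{B}_m(f)\o c$ and on the other by the exponent $m+1+\al$ in \eqref{eq:errorHL}. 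The curve lemma, the Leibniz-type identity, and the reduction through Claims (I) and (II) are then entirely parallel to the Zygmund case already carried out.
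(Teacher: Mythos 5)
Your proof is correct and follows essentially the same route as the paper's: the same contradiction setup with blow-up rate $n\,2^{n(m+1+\al)}$, the same curves $c_n$ and rescaling $\eta=2^n h_n$, the identity from \eqref{eq:Tn}, and the same splitting $f_1=f_2+\mathbf{B}_m(f)$ estimated via \eqref{eq:errorHL} and the $\cC^{m+1,\al}$-regularity of $\mathbf{B}_m(f)\o c$. The extra details you supply (local boundedness of $g$ justifying the ``we may assume'' reduction, and the modified $\mathbf{B}_0$ for $m=0$) are points the paper leaves implicit.
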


We may assume that there exist $x_n$ and $h_n$ with $|x_n| \le 4^{-n}$ and $0<h_n < 4^{-n}$ such that
\[
 h_n^{1-\al} |\de^{m+1}_{\text{eq}} g(x_n;h_n)| \ge n 2^{n(m+1+\al)}.
\]
With the same choices of $c_n$ and $s_n$ as in \Cref{sec:proofkey} we find a $\cC^\infty$-curve $c$ and a convergent sequence of reals $t_n$ such that $c(t+t_n) = c_n(t)$
for $|t|\le s_n$.
Using $f_1(x,h) := h g(x)$ and $T_n$ from \eqref{eq:Tn}, we find
\begin{align*}
   (2^n h_n)^{-\al} |T_n| &= (2^n h_n)^{1-\al} |\de^{m+2}_{\on{eq}} (f_1 \o c)(t_n;2^n h_n)|
   \\
   &=  \frac{m+2}{2^{n(m+1+\al)}} h_n^{1-\al} |\de^{m+1}_{\on{eq}} g(x_n;h_n)| \ge (m+2) n.
\end{align*}
On the other hand, \eqref{eq:errorHL} implies that for $f_2 := f_1 -\mathbf{B}_m(f)$ the sequence
$(2^n h_n)^{1-\al} |\de^{m+2}_{\on{eq}} (f_2 \o c)(t_n;2^n h_n)|$ is bounded.
Since $\mathbf{B}_m(f) \o c \in \cC^{m+1,\al}(\R)$ also
$(2^n h_n)^{1-\al} |\de^{m+2}_{\on{eq}} (\mathbf{B}_m(f) \o c)(t_n;2^n h_n)|$ is bounded, by \Cref{thm:Hchar},
so that $(2^n h_n)^{-\al} |T_n|$ is bounded, a contradiction.

\section{On Banach spaces and beyond} \label{sec:Banach}

In this section we use the characterization in \Cref{thm:main1} to extend the local Zygmund classes to convenient vector spaces.
We shall see in \Cref{cor:Banach} that a version of \Cref{thm:main1} holds for maps between Banach spaces.
For background on convenient analysis we refer to \cite{KM97} and also \cite{FK88}.
We will also discuss versions of \Cref{thm:main1} and \Cref{thm:main2} where the domain of $f$ is not an open set.

\subsection{Convenient analysis of local H\"older--Zygmund classes}
Recall that a \emph{convenient vector space} $E$ is a Mackey-complete locally convex space.
The \emph{$c^\infty$-topology} on $E$ is the final topology with respect to all $\cC^\infty$-curves
(equivalently, all Mackey-convergent sequences) in $E$;
it is not a linear topology.

Let $m \in \N$ and $\al \in (0,1]$.

\begin{definition}
  Let $E, F$ be convenient vector spaces, $U$ a $c^\infty$-open subset of $E$.
  A map $f : U \to F$ is said to be of class $\Zyg^m$ (resp.\ $\Hol^m_\al$) if
  for each $c \in \cC^\infty(\R,U)$ and each $\ell \in F'$ the composite $\ell \o f \o c$
  belongs to $\cZ^{m,1}(\R)$ (resp.\ $\cC^{m,\al}(\R)$).
\end{definition}

For $\Hol^m_\al$ the results we are aiming for have already be established in \cite{FF89} (for $\al=1$ also in \cite{FK88}).
So we will exploit the fact that, by \eqref{eq:inclusions}, if $f$ is of class $\Zyg^m$ then it is of class $\Hol^m_\al$ for any $\al \in (0,1)$.

Let $f : U \to F$ be of class $\Zyg^1$. Then, by \cite[Lemma 7]{FF89}, $f$ is \emph{weakly differentiable},
i.e., for all $x \in U$, $v \in E$, the limit
\[
  df(x,v) := \lim_{t \to 0} \frac{f(x+tv)-f(x)}{t}
\]
exists with respect to the weak topology. By \cite[Proposition 9]{FF89}, $f$ is also \emph{strictly differentiable},
i.e., for each Mackey-compact $K\subseteq U$ and bounded $B \subseteq E$,
\[
  \frac{f(x+sv)-f(x+tv)}{s-t}
\]
is Mackey-convergent to $df(x,v)$ as $s,t\to 0$ uniformly for $x \in K$ and $v \in B$
and $f'(x) := df(x,\cdot) \in L(E,F)$ for every $x \in U$.

\begin{lemma} \label{lem:Banach}
  Let $m \in \N$.
  Let $f : U \to F$ be of class $\Zyg^{m+1}$. Then:
  \begin{enumerate}
      \item $df : U \times E \to F$ is of class $\Zyg^m$.
      \item $f' : U \to L(E,F)$ is of class $\Zyg^m$.
  \end{enumerate}
\end{lemma}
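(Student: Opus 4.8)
The plan is to reduce both statements to the scalar situation already proved, using the convenient-calculus dictionary between differentiability on convenient vector spaces and composition with smooth curves, exactly as in \cite{FF89,FK88}. We are given that $f:U\to F$ is of class $\Zyg^{m+1}$, i.e.\ $\ell\o f\o c\in\cZ^{m+1,1}(\R)$ for every $c\in\cC^\infty(\R,U)$ and $\ell\in F'$. By \eqref{eq:inclusions} we also know $f$ is of class $\Hol^{m+1}_\al$ for every $\al\in(0,1)$, so the results of \cite{FF89} are available; in particular the discussion preceding the lemma gives that $f$ is strictly differentiable with $f'(x)\in L(E,F)$ and that $df:U\times E\to F$ is defined everywhere.

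First I would prove part (1). Fix $c=(x,v)\in\cC^\infty(\R,U\times E)$ and $\ell\in F'$; the goal is $\ell\o df\o(x,v)\in\cZ^{m,1}(\R)$. The key observation is that $\ell\o df\o(x,v)$ is a directional derivative of a composite that we control: for the smooth map $g(t,s):=x(t)+s\,v(t)$, which sends a neighborhood of $\R\times\{0\}$ into $U$, one has, by strict differentiability, $\p_s|_{s=0}\,\ell(f(g(t,s)))=\ell(df(x(t),v(t)))=(\ell\o df\o(x,v))(t)$. Now I localize exactly as in the proof of \Cref{prop:key4}: choose $R,r>0$ with $[-R,R]\times[-r,r]\subseteq g^{-1}(U)$, compose with cut-off functions $\vh_R,\vh_r$ to obtain $\tilde g:\R^2\to E$ with $g(\R^2)\subseteq U$ agreeing with $g$ on $[-\tfrac R2,\tfrac R2]\times[-\tfrac r2,\tfrac r2]$, and set $F_\ell:=\ell\o f\o\tilde g:\R^2\to\R$. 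For every $\gamma\in\cC^\infty(\R,\R^2)$ we have $\tilde g\o\gamma\in\cC^\infty(\R,U)$, hence $F_\ell\o\gamma=\ell\o f\o(\tilde g\o\gamma)\in\cZ^{m+1,1}(\R)$ by hypothesis. Thus $F_\ell$ satisfies the hypothesis of \Cref{lem:inductionstep} (with $m+1$ in place of $m$), so $\p_2F_\ell(\cdot,0)\in\cZ^{m,1}(\R)$; and on $[-\tfrac R2,\tfrac R2]$ this coincides with $\ell\o df\o(x,v)$. Since $R$ was arbitrary and $\cZ^{m,1}$ is a local property, $\ell\o df\o(x,v)\in\cZ^{m,1}(\R)$, which is (1).

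Part (2) is then a formal consequence. A weak-$*$ smooth curve $\gamma\in\cC^\infty(\R,L(E,F))$ composed with evaluation at a fixed $v\in E$ gives $\gamma(\cdot)v\in\cC^\infty(\R,F)$, and conversely the linear structure of $L(E,F)$ with its topology of bounded convergence means that to test $\Zyg^m$ for $f':U\to L(E,F)$ it suffices, by the definition of the $\Zyg^m$ class and the description of the bornology on $L(E,F)$, to test $\ell\o\bigl(\ev_v\o f'\o c\bigr)=\ell\o df\o(c,v)$ for all $c\in\cC^\infty(\R,U)$, $v\in E$, $\ell\in F'$ — this is covered by part (1) applied to the curve $(c,\mathrm{const}_v)$. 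I would present this as: $f'=df\o(\id_U\times\,\cdot\,)$ in the appropriate sense, and invoke the same reasoning as in \cite[Section 12]{KM97} and \cite{FF89} that composing a $\Zyg^m$ map into $E\times E$-type domains with the natural smooth/linear operations preserves the class, together with the standard fact that a curve into $L(E,F)$ is smooth iff all its evaluations are.

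The main obstacle I anticipate is not the scalar input — that is exactly \Cref{lem:inductionstep} — but making the passage from "$\ell\o df\o(x,v)\in\cZ^{m,1}(\R)$ for all curves" to the assertion that $f'$ is genuinely a $\Zyg^m$ \emph{map into $L(E,F)$}; one must be careful that $f'$ actually takes values in $L(E,F)$ (linearity and boundedness of $df(x,\cdot)$, which is the content of strict differentiability from \cite{FF89}) and that smoothness of the relevant auxiliary curves into $U\times E$ is automatic. Once the convenient-calculus bookkeeping from \cite{FF89,KM97} is in place, the rest is the localization-plus-\Cref{lem:inductionstep} argument above, which is essentially a verbatim repetition of the proof of \Cref{prop:key4} with scalar targets replaced by $\ell\o(\,\cdot\,)$.
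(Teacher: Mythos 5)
Your proof is correct and follows essentially the same route as the paper: part (1) is exactly the localization-plus-\Cref{lem:inductionstep} argument from the proof of \Cref{prop:key4}, applied to $(t,s)\mapsto\ell(f(x(t)+s\,v(t)))$, and part (2) reduces to the evaluations $\on{ev}_v\o f'=df(\cdot,v)$ handled by part (1). The only difference is cosmetic: the paper justifies the reduction in (2) by explicitly invoking the uniform boundedness principle for $L(E,F)$ together with the bornological characterization of $\Zyg^m$-curves (\Cref{thm:ZcharE}), which is the precise form of the ``test with point evaluations'' step you sketch via the bornology of $L(E,F)$.
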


\begin{proof}
  (1) follows from the proof of \Cref{prop:key4}: consider $(t,s) \mapsto \ell(f(x(t)+sv(t)))$ where $\ell \in F'$.
  By the uniform boundedness principle,
  to see (2) it suffices to check that $\on{ev}_v \o f' : U \to F$ is of class $\Zyg^m$ for all $v \in E$ which
  holds by  (1) since $\on{ev}_v \o f' = df(\cdot,v)$.
  Indeed, a curve $c : \R \to G$ in a convenient vector space $G$ is of class $\Zyg^m$ if and only if
  $h \de^{m+2}_{\on{eq}} c(t;h)$ and $\de^{m}_{\on{eq}} c(t;h)$ are locally bounded in
  $(t,h) \in \R \times (\R\setminus \{0\})$, by \Cref{thm:Zchar} and testing with $\ell \in G'$.
\end{proof}

Assume that $f: U \to F$ is weakly differentiable and $df$ is of class $\Zyg^0$.
Let $c \in \cC^\infty(\R, U)$.
Then $f\o c$ is differentiable and $(f\o c)'(t)=df(c(t),c'(t))$,
by \cite[Proposition 11]{FF89}.

\begin{theorem}
  Let $m \in \N$.
  Let $E, F$ be convenient vector spaces, $U$ a $c^\infty$-open subset of $E$, and
  $f : U \to F$ a map.
  The following conditions are equivalent:
  \begin{enumerate}
    \item $f$ is of class $\Zyg^{m+1}$.
    \item $f$ is strictly differentiable and $f' : U \mapsto L(E,F)$ is of class $\Zyg^{m}$.
    \item $f$ is weakly differentiable and $df : U \times E \to F$ is of class $\Zyg^m$.
  \end{enumerate}
\end{theorem}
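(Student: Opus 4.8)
The plan is to prove the theorem by establishing the chain of implications $(1) \Rightarrow (2) \Rightarrow (3) \Rightarrow (1)$, leaning on the results already assembled in this section about weak and strict differentiability for the $\Zyg^m$-scale, exactly as in the classical convenient-analysis treatment of the Lipschitz classes in \cite{FK88,KM97}.

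For $(1) \Rightarrow (2)$: if $f$ is of class $\Zyg^{m+1}$ then in particular it is of class $\Zyg^1$, so by the cited results of \cite{FF89} (Lemma~7, Proposition~9) $f$ is strictly differentiable and $f'(x) = df(x,\cdot) \in L(E,F)$ for every $x \in U$. That $f' : U \to L(E,F)$ is then of class $\Zyg^m$ is precisely \Cref{lem:Banach}(2). For $(2) \Rightarrow (3)$: strict differentiability implies weak differentiability, and $df(x,v) = f'(x)(v) = \on{ev}_v(f'(x))$, so $df = \on{ev}_v \o f'$ in the $v$-slot; since $\on{ev}_v \in L(L(E,F),F)$ is bounded linear, and composing a $\Zyg^m$-map with a bounded linear map stays $\Zyg^m$ (test with $\ell \in F'$ and use that $\ell \o \on{ev}_v \in L(E,F)'$), it follows that $v \mapsto df(\cdot,v)$ is of class $\Zyg^m$ for each fixed $v$; to get joint $\Zyg^m$-regularity of $df$ on $U \times E$ one argues as in the proof of \Cref{prop:key4}/\Cref{lem:Banach}(1), i.e.\ for a curve $(x,v) \in \cC^\infty(\R, U\times E)$ and $\ell \in F'$ one forms $g(t,s) := \ell(f(x(t)+sv(t)))$ and applies \Cref{lem:inductionstep} (after the standard cutoff trick that replaces $g$ by a function defined on all of $\R^2$) to conclude $t \mapsto \p_2 g(t,0) = \ell((df\o(x,v))(t))$ is in $\cZ^{m,1}(\R)$.

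For $(3) \Rightarrow (1)$, which is the substantive direction: assume $f$ is weakly differentiable and $df : U\times E \to F$ is of class $\Zyg^m$; in particular $df$ is of class $\Zyg^0$, so by the remark just before the theorem (citing \cite[Proposition~11]{FF89}) every $f\o c$ with $c \in \cC^\infty(\R,U)$ is differentiable with $(f\o c)'(t) = df(c(t),c'(t))$. Now fix $\ell \in F'$ and $c \in \cC^\infty(\R,U)$. The curve $t \mapsto (c(t),c'(t))$ is smooth into $U\times E$, so $\ell \o df \o (c,c') \in \cZ^{m,1}(\R)$ by hypothesis; hence $(\ell\o f\o c)' = \ell(df(c(\cdot),c'(\cdot)))$ lies in $\cZ^{m,1}(\R)$, which means $\ell\o f\o c \in \cZ^{m+1,1}(\R)$ (an antiderivative of a $\cZ^{m,1}$-function is $\cZ^{m+1,1}$, immediate from the recursive definition). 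Since $\ell$ and $c$ were arbitrary, $f$ is of class $\Zyg^{m+1}$. The main obstacle is the joint-regularity bookkeeping in $(2)\Rightarrow(3)$: one must be careful that $\Zyg^m$-regularity on the product $U\times E$ genuinely follows from testing with smooth curves into the product together with \Cref{lem:inductionstep}, rather than only separate regularity in the two factors — but this is exactly the content of the proof of \Cref{prop:key4} transported to the convenient setting via $\ell \in F'$, so no new ideas are needed beyond what \Cref{lem:Banach} already provides.
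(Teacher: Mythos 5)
Your implications (1) $\Rightarrow$ (2) and (3) $\Rightarrow$ (1) coincide with the paper's proof and are fine (including the observation that an antiderivative of a $\cZ^{m,1}$-function is $\cZ^{m+1,1}$). The genuine problem is (2) $\Rightarrow$ (3): the route you propose for the \emph{joint} $\Zyg^m$-regularity of $df$ on $U\times E$ is circular. Applying \Cref{lem:inductionstep} to $g(t,s)=\ell(f(x(t)+sv(t)))$ --- equivalently, transporting the proof of \Cref{prop:key4}, i.e.\ of \Cref{lem:Banach}(1) --- requires that $g\o c\in\cZ^{m+1,1}(\R)$ for every $c\in\cC^\infty(\R,\R^2)$, that is, that $\ell\o f\o\gamma\in\cZ^{m+1,1}(\R)$ for general smooth curves $\gamma$ in $U$ of the form $\gamma(t)=x(c_1(t))+c_2(t)v(c_1(t))$. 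This is precisely hypothesis (1), which is not available when proving (2) $\Rightarrow$ (3); \Cref{lem:Banach} assumes $f$ of class $\Zyg^{m+1}$, so it cannot be invoked here. As written, your chain only yields (1) $\Rightarrow$ (2), (1) $\Rightarrow$ (3), and (3) $\Rightarrow$ (1), so the equivalence of (2) with the other two conditions is not established.

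The paper closes this step without any curve lemma: write $df=\on{ev}\o(f'\times\id)$, where $\on{ev}:L(E,F)\times E\to F$ is bounded bilinear and hence smooth. For a smooth curve $(x,v)$ into $U\times E$ one has $df(x(t),v(t))=\on{ev}(f'(x(t)),v(t))$, the pairing under a bounded bilinear map of a $\Zyg^m$-curve in $L(E,F)$ (this is hypothesis (2)) with a smooth curve in $E$; that the result is again a $\Zyg^m$-curve follows from the Leibniz rule \eqref{eq:Leibniz} for finite differences together with \Cref{thm:ZcharE}, and uses only (2). Replacing your joint-regularity argument by this composition repairs the proof.
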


\begin{proof}
  (1) $\Rightarrow$ (2) follows from \Cref{lem:Banach} and the preceding remarks.

  (2) $\Rightarrow$ (3) The map $f' \times \id : U\times E \mapsto L(E,F)\times E$ is of class $\Zyg^{m}$.
  The evaluation map $\on{ev}: L(E,F)\times E \to F$ is bilinear and smooth. It follows that
  $df = \on{ev} \o (f' \times \id)$ is of class $\Zyg^m$.

  (3) $\Rightarrow$ (1) Let $c \in \cC^\infty(\R, U)$. Then $(f\o c)'(t)=df(c(t),c'(t))$ (see remark before the theorem)
  is of class $\Zyg^m$, whence $f$ is of class $\Zyg^{m+1}$.
\end{proof}

\begin{corollary} \label{cor:Banach}
    Let $m \in \N$.
    Let $E,F$ be Banach spaces, $U$ open in $E$, and $f : U \to F$ a map.
    Then $f$ is of class $\Zyg^m$ if and only if $f$ is $m$-times Fr\'echet differentiable
    such that $f^{(m)} \in \cZ^{0,1}(U,L^m(E,F))$.
\end{corollary}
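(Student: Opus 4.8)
The plan is to show that, for maps between Banach spaces, the "convenient-analytic" notion of class $\Zyg^m$ coincides with the classical one of being $m$-times Fréchet differentiable with $m$-th derivative in $\cZ^{0,1}$. First I would reduce to the iterated derivative via the preceding Theorem: by induction on $m$, a map $f : U \to F$ is of class $\Zyg^m$ if and only if $f$ is strictly differentiable (hence, in the Banach setting, Fréchet differentiable) with $f' : U \to L(E,F)$ of class $\Zyg^{m-1}$, and iterating, if and only if $f$ is $m$-times differentiable with $f^{(m)} : U \to L^m(E,F) = L(E,\dots,L(E,F))$ of class $\Zyg^0$. So the whole statement collapses to the base case $m=0$: a map $g : U \to G$ between Banach spaces is of class $\Zyg^0$ if and only if $g \in \cZ^{0,1}(U,G)$, where the latter means $g$ is continuous and, on compact sets, $|h|^{-1}\|g(x+h)-2g(x)+g(x-h)\|$ is bounded.

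The base case splits into two implications. The direction $\cZ^{0,1}(U,G) \Rightarrow \Zyg^0$ is easy: if $g$ satisfies the vector-valued second-difference bound on compacta, then for any $c \in \cC^\infty(\R,U)$ and $\ell \in G'$, the image of a bounded interval under $c$ lies in a compact subset of $U$, and the curve $c$ is locally Lipschitz, so $\|g(c(t+s)) - 2g(c(t)) + g(c(t-s))\|$ is $O(|s|)$ on bounded intervals — the second differences of $c$ itself contribute an extra $O(s^2)$ term handled by continuity of $g$ — and applying $\ell$ preserves this; hence $\ell \o g \o c \in \cZ^{0,1}(\R)$. For the converse $\Zyg^0 \Rightarrow \cZ^{0,1}(U,G)$, I would first note that class $\Zyg^0$ implies class $\Hol^0_\al$ for every $\al \in (0,1)$ by the inclusions \eqref{eq:inclusions}, and the scalar results of the paper (applied to $\ell \o g$ for each $\ell \in G'$, together with the reflexivity/duality available for Banach spaces, or more directly the Banach-space versions of \cite{FF89}) already give that $g$ is continuous and locally bounded. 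The substantive point is to upgrade the \emph{scalarwise} second-difference bounds to a \emph{uniform} (norm) bound. For each fixed compact $K \subseteq U$ and each $\ell$ in the unit ball of $G'$, the quantity $\sup\{|h|^{-1}|\ell(g(x+h)-2g(x)+g(x-h))| : x, x\pm h \in K\}$ is finite; the goal is to bound this uniformly in $\ell$.

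The main obstacle is precisely this passage from scalarwise to uniform bounds, and I expect to handle it exactly as the analogous step for Hölder classes is handled in \cite{FK88,FF89,KM97}: via a curve-lemma/uniform-boundedness argument. Concretely, suppose for contradiction that there are $\ell_n$ in the unit ball of $G'$ and points $x_n$, $x_n \pm h_n \in K$ with $|h_n|^{-1}|\ell_n(g(x_n+h_n)-2g(x_n)+g(x_n-h_n))|$ unbounded; passing to subsequences as in the proof of \Cref{lem:C0om} (so that $x_n \to x$, $h_n \to 0$ fast, with the difference quotients growing like $n2^n$), build via \Cref{curvelemma} a single $\cC^\infty$-curve $c : \R \to U$ absorbing all the little segments $t \mapsto x_n + t h_n/(2^n|h_n|)$. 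Then for the bounded curve $g \o c : \R \to G$ one gets $\ell_n$ with $|s_n|^{-1}|\ell_n((g\o c)(t_n+s_n) - 2(g\o c)(t_n) + (g\o c)(t_n - s_n))|$ unbounded; but the vector $v_n := s_n^{-1}((g\o c)(t_n+s_n) - 2(g\o c)(t_n) + (g\o c)(t_n - s_n))$ forms a sequence in $G$ each of whose scalar images $\ell(v_n)$ is bounded (since $\ell \o g \o c \in \cZ^{0,1}(\R)$ for every fixed $\ell$), hence $(v_n)$ is weakly bounded, hence norm-bounded by uniform boundedness — contradicting $\|v_n\| \ge |\ell_n(v_n)| \to \infty$. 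This yields the uniform bound on $K$, i.e. $g \in \cZ^{0,1}(K,G)$ for every compact $K$, completing the base case and hence, by the inductive reduction above, the corollary.
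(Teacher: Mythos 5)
Your proposal is correct and follows essentially the same route as the paper: reduce via the preceding theorem to the case $m=0$, then rerun the argument of \Cref{lem:C0om} using \Cref{curvelemma}, the only genuinely new point being the passage from scalarwise to norm bounds in the target, which you settle correctly by Hahn--Banach plus the uniform boundedness principle. One small imprecision: in the easy direction the error coming from $c(t+s)-2c(t)+c(t-s)=O(s^2)$ is not ``handled by continuity of $g$'' alone --- you need a modulus, e.g.\ the local $\cC^{0,\al}$-regularity with $\al>1/2$ that follows from the second-difference bound (cf.\ \eqref{eq:inclusions}), or simply quote \Cref{prop:composition} as the paper does.
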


It is straightforward to adapt the definition of $\cZ^{0,1}$ to maps between Banach spaces.
The proof of \Cref{lem:C0om} shows that such a map is of class $\cZ^{0,1}$ if and only if it is of class $\Zyg^0$;
note that \Cref{curvelemma} is valid in convenient vector spaces.
So, for maps between Banach spaces, $\Zyg^m$ coincides with the naive notion of local Zygmund regularity.

The definition of difference quotient and finite difference of arbitrary order obviously makes sense for
functions $f : \R \to E$ with values in a vector space $E$.
It turns out that \Cref{thm:Zchar} (as well as \Cref{thm:Lchar} and \Cref{thm:Hchar}) are still valid if $E$ is a convenient vector space:

\begin{theorem} \label{thm:ZcharE}
  Let $m \in \N$.
  Let $E$ be a convenient vector space.
  Let $f : \R \to E$ be a function such that $\ell \o f$ is continuous for all $\ell \in E'$.
  The following conditions are equivalent:
  \begin{enumerate}
    \item $f$ is of class $\Zyg^m$.
    \item $(x,h) \mapsto h \de^{m+2}_{\on{eq}} f(x;h)$ and $(x,h) \mapsto \de^{m}_{\on{eq}} f(x;h)$ are bounded on bounded subsets
    of $\R \times (\R\setminus \{0\})$.
  \end{enumerate}
\end{theorem}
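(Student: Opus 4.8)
The strategy is to reduce Theorem~\ref{thm:ZcharE} to the scalar case already established in \Cref{thm:Zchar} by testing with functionals $\ell \in E'$, and to upgrade the scalar-valued conclusions to vector-valued ones using the definition of $\Zyg^m$ on convenient vector spaces together with the characterization of boundedness on bounded sets in the $c^\infty$-topology. The key point is that $\Zyg^m$-regularity of a curve $f : \R \to E$ was \emph{defined} precisely by the condition that $\ell \o f \o c \in \cZ^{m,1}(\R)$ for all $c \in \cC^\infty(\R,\R)$ and all $\ell \in E'$; for a curve one may take $c = \id_\R$ (or an exhausting family of reparametrizations of bounded intervals), so $f$ is of class $\Zyg^m$ iff $\ell \o f \in \cZ^{m,1}(\R)$ for every $\ell \in E'$.

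\smallskip

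First I would prove (1) $\Rightarrow$ (2). Suppose $f$ is of class $\Zyg^m$. Then for each $\ell \in E'$ we have $\ell \o f \in \cZ^{m,1}(\R)$, so by \Cref{thm:Zchar} the scalar functions $(x,h) \mapsto h\,\de^{m+2}_{\on{eq}}(\ell \o f)(x;h)$ and $(x,h) \mapsto \de^{m}_{\on{eq}}(\ell \o f)(x;h)$ are locally bounded on $\R \times (\R \setminus \{0\})$. Since $\de^k_{\on{eq}}$ is a fixed finite linear combination of evaluations, it commutes with $\ell$: $\de^k_{\on{eq}}(\ell \o f)(x;h) = \ell\big(\de^k_{\on{eq}} f(x;h)\big)$. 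Hence for every bounded subset $S \subseteq \R \times (\R\setminus\{0\})$ the set $\{h\,\de^{m+2}_{\on{eq}} f(x;h) : (x,h) \in S\} \subseteq E$ is \emph{weakly bounded}, and likewise for the $m$-th equidistant difference quotient. In a convenient (hence locally convex) vector space, weakly bounded subsets are bounded (Mackey's theorem), so both sets are bounded in $E$. Note that ``bounded on bounded subsets of $\R \times (\R\setminus\{0\})$'' must be read with the understanding that $S$ stays away from the zero set $h=0$ in the obvious sense (i.e.\ $S$ is contained in a set of the form $[-N,N]\times([-N,N]\setminus(-1/N,1/N))$), exactly as in \Cref{thm:Zchar}; the factor $h$ in front of $\de^{m+2}_{\on{eq}}$ is what makes the first quantity nonetheless bounded near $h=0$.

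\smallskip

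For (2) $\Rightarrow$ (1) I would reverse this. Assume both vector-valued quantities are bounded on bounded subsets of $\R \times (\R\setminus\{0\})$. Fix $\ell \in E'$. Then $\ell\big(h\,\de^{m+2}_{\on{eq}} f(x;h)\big) = h\,\de^{m+2}_{\on{eq}}(\ell\o f)(x;h)$ and $\ell\big(\de^{m}_{\on{eq}} f(x;h)\big) = \de^{m}_{\on{eq}}(\ell\o f)(x;h)$ are bounded on bounded subsets of $\R\times(\R\setminus\{0\})$, since $\ell$ is bounded on bounded sets; thus they are locally bounded there. By hypothesis $\ell \o f$ is continuous, so \Cref{thm:Zchar} gives $\ell \o f \in \cZ^{m,1}(\R)$. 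As this holds for every $\ell \in E'$, the curve $f$ is of class $\Zyg^m$ by definition (recall that for a curve, testing with all reparametrizations $c \in \cC^\infty(\R,\R)$ reduces to the single curve $\id_\R$ up to restriction to bounded intervals, which is exactly what local boundedness accounts for). This completes the equivalence.

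\smallskip

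I expect the proof to be essentially routine once the definitional reductions are made; the only mild subtlety, and the step I would flag, is the passage from ``weakly bounded'' to ``bounded'' in the convenient setting for the direction (1) $\Rightarrow$ (2) — this is where Mackey-completeness (or simply local convexity together with Mackey's theorem) is used, and it is the sole structural input beyond \Cref{thm:Zchar}. A secondary point worth stating carefully is the precise meaning of ``bounded on bounded subsets of $\R \times (\R\setminus\{0\})$'' versus ``locally bounded on $\R \times (\R\setminus\{0\})$''; for the open set $\R \times (\R\setminus\{0\})$ these coincide for the relevant quantities because, after multiplication by $h$, the first expression extends boundedly across $h=0$, so no genuine issue arises. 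The remark that \Cref{thm:Lchar} and \Cref{thm:Hchar} admit the same vector-valued generalization follows by the identical argument, replacing \Cref{thm:Zchar} by the corresponding scalar characterization.
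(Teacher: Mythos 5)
Your proposal is correct and follows essentially the same route as the paper, whose proof is the one-line observation that both (1) and (2) can be tested by composing with $\ell \in E'$ and then invoking \Cref{thm:Zchar}; you have merely filled in the details (commutation of $\ell$ with $\de^k_{\on{eq}}$, the Mackey theorem that weakly bounded sets are bounded, and the reduction of $\Zyg^m$ for curves to $\ell\o f \in \cZ^{m,1}(\R)$ via \Cref{prop:composition}), all of which are exactly the implicit ingredients of the paper's argument.
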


\begin{proof}
   This is immediate from \Cref{thm:Zchar}, since (1) and (2) can be tested by composing with $\ell \in E'$.
\end{proof}

\begin{remark}
  Let $E,F$ be convenient vector spaces and $U \subseteq E$ a $c^\infty$-open subset.
  Let us endow $\Zyg^m(\R,F)$ with the
  initial structure with respect to
  all maps $c \mapsto ((x,h)\mapsto  \de^{j}_{\on{eq}} c(x;h))$, for $j=0,1,\ldots,m$,
  and $c \mapsto ((x,h)\mapsto  h \de^{m+2}_{\on{eq}} c(x;h))$
  into the space of all maps
  $\R \times (\R \setminus \{0\}) \to F$ that are bounded on bounded sets, where the latter
  space carries the locally convex topology of uniform convergence on bounded sets.
  Then the space $\Zyg^m(U,F)$ of all maps $f : U \to F$ of class $\Zyg^m$
  endowed with the initial structure with respect to all maps $c^* : \Zyg^m(U,F) \to \Zyg^m(\R,F)$
  for $c\in \cC^\infty(\R,U)$, is a convenient vector space
  and it satisfies the uniform boundedness principle with respect to the
  point evaluations $\on{ev}_x : \Zyg^m(U,F) \to F$ for $x \in U$.
  This can be seen in analogy to \cite[12.11]{KM97}.
\end{remark}

\subsection{Functions on non-open domains}

Let $E,F$ be convenient vector spaces and let $X \subseteq E$ be a convex subset with non-empty $c^\infty$-interior $X^\o$.
We say that a map $f : X \to F$ is of class $\Zyg^{m}$ (resp.\ $\Hol^m_\al$) if
for each $c \in \cC^\infty(\R,X)$ (i.e. $c \in \cC^\infty(\R,E)$ with $c(\R) \subseteq X$)
and each $\ell \in F'$ the composite $\ell \o f \o c$
belongs to $\cZ^{m,1}(\R)$ (resp.\ $\cC^{m,\al}$).

\begin{theorem} \label{thm:arcsmooth1}
Let $E,F$ be convenient vector spaces and let $X \subseteq E$ be a convex subset with non-empty $c^\infty$-interior $X^\o$.
Let $f : X \to F$ be of class $\Zyg^{2m}$ (resp.\ $\Hol^{2m}_\al$) for $m \in \N_{\ge 1}$ (and $\al \in (0,1]$).
Then $f|_{X^\o}$ is of class $\Zyg^{2m}$ (resp.\ $\Hol^{2m}_\al$) and
for $j\le m$ the derivatives $(f|_{X^\o})^{(j)}$ extend uniquely to maps $f^{(j)} : X \to L^j(E,F)$
of class $\Zyg^{2(m-j)}$ (resp.\ $\Hol^{2(m-j)}_\al$).
\end{theorem}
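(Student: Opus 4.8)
The plan is to reduce to scalar targets and the curve conditions, and then to strip off one derivative at a time, the substance lying in what happens across $X\setminus X^\o$. Testing with $\ell\in F'$ reduces everything to $F=\R$, and the $L^j(E,F)$-valued conclusion follows from the $\R$-valued one by the uniform boundedness principle exactly as in \Cref{lem:Banach}. Since every $c\in\cC^\infty(\R,X^\o)$ is in particular a curve into $X$, the restriction $f|_{X^\o}$ is of class $\Zyg^{2m}$, hence, by the theorem preceding \Cref{cor:Banach} applied $2m$ times, it is $2m$-times strictly differentiable on $X^\o$ with $(f|_{X^\o})^{(j)}$ of class $\Zyg^{2m-j}\subseteq\Zyg^{2(m-j)}$ there for $j\le m$. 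It thus remains to extend each $(f|_{X^\o})^{(j)}$ across $X\setminus X^\o$ and to check the extension is of class $\Zyg^{2(m-j)}$ on all of $X$. I would obtain this by iterating the single step: \emph{if $g:Y\to\R$ is of class $\Zyg^{2\mu}$ on a convex set $Y$ with nonempty $c^\infty$-interior and $\mu\ge1$, then $d(g|_{Y^\o})$ extends uniquely to a map $\widetilde{dg}:Y\times E\to\R$ of class $\Zyg^{2\mu-2}$ with $(g\o c)'(t)=\widetilde{dg}(c(t),c'(t))$ for all $c\in\cC^\infty(\R,Y)$.} Applying the step with $(g,Y,\mu)=(f,X,m)$, then with $(\widetilde{df},X\times E,m-1)$ (note $X\times E$ is convex with $c^\infty$-interior $X^\o\times E$), then recovering $f^{(2)}$ from $\widetilde{d(\widetilde{df})}$ by precomposition with the linear map $(x,v_1,v_2)\mapsto\big((x,v_2),(v_1,0)\big)$ and symmetrising as in \Cref{lem:Banach}, and iterating $m$ times, yields $f^{(j)}:X\to L^j(E,\R)$ of class $\Zyg^{2(m-j)}$ for $j\le m$, the last of class $\Zyg^0$; the step needs input of class $\ge\Zyg^2$, which is why one cannot pass $j=m$. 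Uniqueness at each stage is automatic, since $Y^\o$ is $c^\infty$-dense in $Y$ (for $x\in Y$, $y\in Y^\o$ the sequence $x+\tfrac1n(y-x)$ lies in $Y^\o$ and Mackey-converges to $x$) and maps of class $\Zyg^0$ are continuous for the $c^\infty$-topology.

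\textbf{The step.} Convexity enters only through the fact that a smooth curve in $Y$ meets the relative boundary with even contact order at least two: if $c\in\cC^\infty(\R,Y)$ with $c(t_0)\in Y\setminus Y^\o$, then $t\mapsto\lambda(c(t_0))-\lambda(c(t))$ is nonnegative, smooth, and vanishes at $t_0$ for every $\lambda\in E'$ supporting $Y$ at $c(t_0)$, hence vanishes to even order $\ge2$ (with Glaeser's inequality as the quantitative companion). As a first consequence, parametrising segments $[x,x']\subseteq Y$ affinely and using $\cZ^{2\mu,1}\subseteq\cC^{0,1}$ together with the compactness furnished by \Cref{curvelemma} shows $g$ is locally Lipschitz on $Y$; therefore, for any $c\in\cC^\infty(\R,Y)$ with $c(0)=x_0$, the value $(g\o c)'(0)$ depends only on $x_0$ and $c'(0)$, so $\widetilde{dg}(x_0,v):=(g\o c)'(0)$ is well defined for $v$ in the (linear) tangent space at $x_0$. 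To pin down the regularity, fix $x_0\in Y$, $w_0\in E$, and $y_0\in Y^\o$; put $w:=y_0-x_0$, $x_\tau:=x_0+\tau w\in Y^\o$ for $\tau\in(0,1]$, and take $C_\ve(t):=x_0+\chi(t)(w+\ve w_0)$, a curve into $Y$ equal to $x_0+t^2(w+\ve w_0)$ near $t=0$ (here $y_0+\ve w_0\in Y^\o$ for $\ve$ small and $\chi:\R\to[0,1]$ is smooth with $\chi(t)=t^2$ near $0$). By hypothesis $g\o C_\ve\in\cZ^{2\mu,1}(\R)$, uniformly for $\ve$ near $0$ (this uniformity, via \Cref{thm:ZcharE} and \Cref{curvelemma}, is a technical point), so $G(t):=\p_\ve|_{\ve=0}(g\o C_\ve)(t)$, which for $t\ne0$ equals $t^2\,d(g|_{Y^\o})(x_{t^2},w_0)$, is an even function in $\cZ^{2\mu,1}(\R)$ with $G(0)=G'(0)=0$. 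Since $G''\in\cZ^{2\mu-2,1}(\R)$, Hadamard's lemma gives $G(t)/t^2\in\cZ^{2\mu-2,1}(\R)$; \emph{this division by $t^2$ is exactly where the loss of two orders arises}. In particular $\tau\mapsto d(g|_{Y^\o})(x_\tau,w_0)$ has a limit as $\tau\to0^+$, and one checks it agrees with the curve-derivative value $\widetilde{dg}(x_0,w_0)$ above — so that $\widetilde{dg}(\cdot,w_0)$ is the continuous extension of $d(g|_{Y^\o})(\cdot,w_0)$, linear in $w_0$, and $(g\o c)'=\widetilde{dg}(c,c')$ on all of $\cC^\infty(\R,Y)$.

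\textbf{The class of the extension.} By \Cref{thm:Zchar} it suffices to show $h\,\de^{2\mu}_{\on{eq}}(\widetilde{dg}\o\ga)(t;h)$ and $\de^{2\mu-2}_{\on{eq}}(\widetilde{dg}\o\ga)(t;h)$ are locally bounded for every $\ga=(c,v)\in\cC^\infty(\R,Y\times E)$. Away from times where $c$ meets $Y\setminus Y^\o$ this is the interior statement (\Cref{prop:key4}, \Cref{lem:Banach}). Near such a time one argues by contradiction in the manner of \Cref{lem:inductionstep} and \Cref{lem:error}: extract $t_n$ and $h_n\to0$ along which the difference quotient in question exceeds $n$ times the critical power of $2^n$, and feed into \Cref{curvelemma} curves built from the data $c(t_n),v(t_n)$ and made quadratic in the transverse direction — as the contact-order fact demands, so that they stay inside $Y$ — producing one $c\in\cC^\infty(\R,Y)$ with $g\o c$ violating \Cref{thm:Zchar}. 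The quadratic reparametrisation doubles the critical scaling exponent, which is exactly what lands the contradiction at level $2\mu-2$ rather than at $2\mu-1$ (the answer for open domains, coming from a linear reparametrisation). The H\"older--Lipschitz statement runs in parallel, with $\Hol$, $\cC^{m,\al}$, \Cref{thm:Hchar} and \Cref{lem:inductionstepHL} replacing $\Zyg$, $\cZ^{m,1}$, \Cref{thm:Zchar} and \Cref{lem:inductionstep}.

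\textbf{Main obstacle.} I expect the soft reductions to be routine and the genuine work to be in the last two steps: arranging the test curves to lie in $Y$ while still witnessing the failure of local boundedness; establishing the uniformity of $g\o C_\ve\in\cZ^{2\mu,1}(\R)$ in $\ve$ and the identification of the radial limits with the curve-derivative values (equivalently, the continuity, linearity, and independence of auxiliary data of $\widetilde{dg}$); and tracking the doubled scaling exponents so that the final count lands at $2\mu-2$ and not at $2\mu-1$ or at $\mu$ (the latter being the answer obtained by composing with $t\mapsto t^2$ without differentiating in a transverse parameter).
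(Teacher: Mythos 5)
The paper itself only sketches this result, deferring to \cite{Kriegl97} and \cite[Theorem 24.5]{KM97} and recording that the crucial ingredient is \Cref{thm:main1} (resp.\ \Cref{thm:main2}) applied in dimension two. Your overall strategy --- quadratic test curves $t\mapsto x_0+t^2(w+\ve w_0)$ into the convex set, a two-variable Boman-type argument, and a division by $t^2$ producing the loss of two orders per derivative --- is exactly the route being pointed to, and your soft reductions (scalar targets, uniform boundedness, uniqueness via $c^\infty$-density of $Y^\o$) are fine.

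The central analytic step, however, is not correctly executed. First, an order count: the clean way to control $G(t)=\p_\ve|_{\ve=0}(g\o C_\ve)(t)$ is to apply \Cref{thm:main1} in two variables to $H(t,\ve):=g(C_\ve(t))$, which yields $H\in\cZ^{2\mu,1}$ locally on $\R^2$ and hence only $G=\p_\ve H(\cdot,0)\in\cZ^{2\mu-1,1}(\R)$ --- one order less than the $\cZ^{2\mu,1}$ you assert (a parameter derivative of a family that is merely uniformly $\cZ^{2\mu,1}$ in one variable is not again $\cZ^{2\mu,1}$, and without the two-dimensional theorem it need not exist in the required sense at all). Consequently your subsequent claim $G''\in\cZ^{2\mu-2,1}$ also drops by one order, and for $\mu=1$ the second derivative need not exist pointwise. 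Second, and more seriously, with the correct order the existence of the boundary limit fails as argued precisely in the base case $\mu=1$, which your iteration must reach to extend $f^{(m)}$ with class $\Zyg^0$: the function $G(t)=t^2\sin(\log t^2)$ is even, lies in $\cC^{1,1}\subseteq\cZ^{1,1}$ near $0$, and satisfies $G(0)=G'(0)=0$, yet $G(t)/t^2=\sin(\log t^2)$ has no limit as $t\to 0$. So ``$G$ even with a double zero, divide by $t^2$'' does not produce the radial limit of $d(g|_{Y^\o})(x_0+\tau w,w_0)$. To secure existence one has to use the undifferentiated composite $g\o C_0\in\cZ^{2\mu,1}\subseteq\cC^{2}$ (for instance, $(g\o C_0)''(0)=2\lim_{t\to0}dg(x_0+t^2w)(w)$ already identifies the limit in the interior-pointing direction $w$) together with the joint two-variable regularity of $H$ to reach general directions $w_0$, and the passage from an even $\cZ^{k}$-function of $t$ to a function of $\tau=t^2$ --- the actual source of the halving --- must be carried out explicitly. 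Until the existence and identification of the extension is established at this bottom level, the induction does not close.
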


\begin{proof}
   This can be shown in analogy to \cite{Kriegl97} and \cite[Theorem 24.5]{KM97}.
   The crucial ingredient is an application of \Cref{thm:main1} (resp.\ \Cref{thm:main2}) in dimension two.
\end{proof}

In finite dimensions this has been generalized in the $\cC^\infty$- and the $\Hol^m_\al$-setting
by \cite{Rainer18,Rainer:2021tr} to
a large class of closed sets $X \subseteq \R^d$ with $X = \ol{X^\o}$ admitting cusps.
Note that in $\R^d$ the $c^\infty$-topology coincides with the classical topology.
In the $\Hol^m_\al$-setting, a loss of regularity becomes apparent
which is directly related to the sharpness of the cusps.
More precisely, for $\be \in (0,1]$ let $\sH^\be(\R^d)$ denote the family of closed subsets $X  \subseteq \R^d$ with $X = \ol{X^\o}$
such that $X^\o$
has the \emph{uniform $\be$-cusp property}:
  for each $x \in \p X$ there exist $\ep>0$, a cusp
  \[
    \Ga = \Big\{(x',x_d) \in \R^{d-1}\times \R: |x'| <r,\, h \Big(\frac{|x'|}{r}\Big)^\be< x_d < h \Big\}
  \]
  for some $r,h>0$,
  and an orthogonal linear map $A : \R^d \to \R^d$ such that $y + A \Ga \subseteq X^\o$ for all $y \in X \cap B(x,\ep)$.
Consider the functions $p,q : (0,1] \to \N$ defined by
\[
    p(\be) := \Big\lceil \frac{2}{\be} \Big\rceil  \quad \text{ and }\quad q(\be) := \Big\lceil \frac{1}{\be} \Big\rceil.
\]

\begin{theorem}[{\cite[Theorem A]{Rainer:2021tr}}] \label{thm:arcsmooth2}
  Let $m \in \N$ and $\al,\be \in (0,1]$.
  Let $X \in \sH^\be(\R^d)$.
  If $f : X \to \R$ is of class $\Hol^{m p(\be)}_\al$, then all partial derivatives of $f$ of order $j \le m$ extend continuously
  from $X^\o$ to $X$ and are of class $\Hol^{(m-j) p(\be)}_\al$,
  and the partial derivatives of order $m$ are locally $\frac{\al\be}{2q(\be)}$-H\"older continuous on $X$.
\end{theorem}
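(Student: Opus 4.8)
The plan is to use \Cref{thm:main2} on the open set $X^\o$ and then to transport regularity across $\p X$ along polynomial curves that are pinched inside the cusps. First I would localize: at interior points the assertion is \Cref{thm:main2} applied to $X^\o$, which already gives $\p^\ga(f|_{X^\o})\in\cC^{mp(\be)-|\ga|,\al}(X^\o)$ for all $|\ga|\le mp(\be)$, so it suffices to work near a fixed $a\in\p X$. Invoking the uniform $\be$-cusp property at $a$ I fix $\ep,r,h>0$, a cusp $\Ga$, and an orthogonal map $A$ with $y+A\Ga\subseteq X^\o$ for all $y\in X\cap B(a,\ep)$; after a translation and an orthogonal change of coordinates I may take $a=0$ and $A=\id$, so that $e_d$ is the axial direction of $\Ga$.

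The inductive core is a one-step lemma: \emph{if $N\ge 1$ and $g$ is a real function of class $\Hol^{Np(\be)}_\al$ on a neighbourhood $V\subseteq X\cap B(0,\ep)$ of $0$, then $\p_i g$ (which exists on $V^\o$ by \Cref{thm:main2}) extends continuously to $V$, the extension is of class $\Hol^{(N-1)p(\be)}_\al$, and, when $N=1$, it is locally $\tfrac{\al\be}{2q(\be)}$-H\"older on $V$.} To prove it, for $y$ near $0$ in $V$, an index $i<d$, and a small $\la>0$ I would use the curves $c_y(t):=y+t^2e_d$ and $c_{y,i}(t):=y+t^2e_d+\la t^{p(\be)}e_i$: since $p(\be)\be\ge 2$, one has $t^2>h(\la|t|^{p(\be)}/r)^\be$ for small $t\ne0$, so both curves lie in $y+\Ga\subseteq V^\o$ for $0<|t|<\de$ with $\de$ independent of $y$ --- the \emph{even} power $t^2$ is exactly what makes the cost $2/\be$ rather than $1/\be$ and keeps the curve on the correct side of the cusp for both signs of $t$. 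By hypothesis $g\o c_y,\,g\o c_{y,i}\in\cC^{Np(\be),\al}(\R)\subseteq\cC^{p(\be),\al}(\R)$, with seminorms bounded uniformly in $y$ (these curves form a bounded family in $\cC^\infty(\R,\R^d)$, so \eqref{eq:inclusions} and the uniform boundedness principle apply). Writing Taylor's formula (\Cref{lem:TaylorHL}) for these one-variable functions up to order $p(\be)$ and, for $y\in V^\o$, expanding the coefficients by the chain rule for finite differences \eqref{chainrule} and its higher-order analogues, one inverts a universal triangular system to obtain an expression $\p_i g(y)=\lim_{h\to0}\Phi_g(y,h)$, where $\Phi_g$ is assembled from finite differences of $g$ along $c_y$ and $c_{y,i}$, is continuous in $(y,h)$ near $(0,0)$ for $y\in V$, and satisfies $|\Phi_g(y,h)-\Phi_g(y,h')|=O(|h|^\al+|h'|^\al)$ uniformly in $y$; hence the limit exists and is continuous on all of $V$, extending $\p_i g$. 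The class of this extension is read off the same way from the control of \emph{second} finite differences of $g$ between two base points $y,\tilde y$; and for $N=1$ with $\tilde y=y+\ell e_i\in\p V$, joining $y$ to $\tilde y$ by a curve inside $V^\o$ forces, by the cusp geometry, a ``dip'' of depth $\gtrsim\ell^\be$ over horizontal scale $\ell$, so dividing the $\cC^{p(\be),\al}$-increment of $g$ along this connecting curve by $\ell$ leaves precisely the exponent $\tfrac{\al\be}{2q(\be)}$.

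The theorem then follows by applying the lemma $m$ times, starting from $g=f$: after $j$ steps each $\p^\ga f$ with $|\ga|=j\le m$ extends continuously to a neighbourhood in $X$ and is of class $\Hol^{(m-j)p(\be)}_\al$ there, and the last application ($N=1$) produces the exponent $\tfrac{\al\be}{2q(\be)}$ for the top-order derivatives; patching over $\p X$ is harmless because the $\be$-cusp property, hence all the constants above, are locally uniform. I expect the main obstacle to be entirely inside the one-step lemma: arranging the curves so that the chain-rule expansion genuinely inverts to recover $\p_i g$ with a loss of \emph{exactly} $p(\be)$ orders and a remainder uniform up to the boundary, and --- in the last step --- extracting the sharp exponent $\tfrac{\al\be}{2q(\be)}$ from the geometry of curves connecting two boundary points through the cusp; dealing with arbitrary test curves $c\in\cC^\infty(\R,X)$ that touch $\p X$ (whose order of contact with $\p X$ is itself constrained by the cusp condition) requires the same circle of estimates. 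This is precisely the content of \cite{Rainer:2021tr}, whose proof follows this pattern.
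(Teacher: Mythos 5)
The paper does not actually prove this statement: it is Theorem A of \cite{Rainer:2021tr}, imported with a citation, so there is no internal proof to compare against. At the level of strategy your outline matches what the cited reference does --- reduce to a one-derivative extension lemma, test $f$ along smooth curves of the form $y+t^2e_d+\la t^{p(\be)}e_i$ pinched inside the cusps (the even axial power $t^2$ being responsible for the $2/\be$ in $p(\be)$), and iterate --- and the reduction of the interior statement to \Cref{thm:main2} is correct.

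As a proof, however, the proposal has a genuine gap exactly where you yourself locate ``the main obstacle'': the one-step lemma is asserted rather than proved, and its central claim is not routine. Both curves $c_y$ and $c_{y,i}$ have vanishing velocity at $t=0$, so in the Taylor expansion of $g\o c_{y,i}$ the quantity $\p_i g(y)$ first occurs in the coefficient of $t^{p(\be)}$, entangled with the axial derivatives $\p_d g(y),\dots,\p_d^{\lfloor p(\be)/2\rfloor}g(y)$ and mixed derivatives contributed by the $t^2e_d$ component; the ``universal triangular system'' can only be inverted after those have already been extended to the boundary with the right regularity, which forces a specific order of induction (axial direction first, then transversal ones) and a separate argument for each --- none of which is supplied. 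Likewise, uniformity in $y$ of the $\cC^{p(\be),\al}$-seminorms of $g\o c_y$ does not follow from a bare appeal to a uniform boundedness principle, since $\Hol^N_\al$ on $X$ is defined curve-by-curve without any uniformity; one needs the general curve lemma (\Cref{curvelemma}) applied to a fast-falling sequence of such curves, as in the proof of \Cref{lem:error}. Finally, the extraction of the sharp exponent $\frac{\al\be}{2q(\be)}$ from the ``dip of depth $\ell^\be$'' heuristic is not carried out and does not visibly yield that exponent. These are precisely the technical contents of \cite{Rainer:2021tr}; your text correctly identifies them but defers them, so what you have is a plan for the external proof rather than a proof.
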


Combining \Cref{thm:main1} with the proof of \cite[Proposition 3.3 and 3.4]{Rainer:2021tr} gives

\begin{theorem} \label{thm:arcsmooth3}
  Let $m \in \N$ and $\be \in (0,1]$.
  Let $X \in \sH^\be(\R^d)$.
  If $f : X \to \R$ is of class $\Zyg^{mp(\be)}$, then all partial derivatives of $f$ of order $j \le m$ extend continuously
  from $X^\o$ to $X$ and are of class $\Zyg^{(m-j)p(\be)}$.
\end{theorem}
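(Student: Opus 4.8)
The plan is to transplant the argument of \cite[Section 3]{Rainer:2021tr} from the H\"older scale to the Zygmund scale, using the tools of \Cref{sec:proofs} in place of their H\"older analogues and invoking \Cref{thm:main1} exactly where \cite[Theorem A]{Rainer:2021tr} uses \Cref{thm:main2}. We argue by induction on $m$; the case $m=0$ is vacuous. For the inductive step it suffices to prove, for every $N \in \N_{\ge 1}$, the statement $(\ast_N)$: \emph{if $g : X \to \R$ is of class $\Zyg^{Np(\be)}$, then each first-order partial derivative of $g|_{X^\o}$ extends continuously to $X$ and the extension, as a function on $X$, is of class $\Zyg^{(N-1)p(\be)}$.} Granting $(\ast_N)$, applying it to $g = f$ with $N = m$, then to $g = \p_i f$ (now of class $\Zyg^{(m-1)p(\be)}$ on $X$) with $N = m-1$, and so on down to $N=1$, yields that the partial derivatives of $f$ of order $j \le m$ extend continuously to $X$ and are of class $\Zyg^{(m-j)p(\be)}$.

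First I would dispose of the interior: since $X^\o$ is open, \Cref{thm:main1} applied to $f|_{X^\o}$ gives $f|_{X^\o} \in \cZ^{mp(\be),1}(X^\o)$, so every partial $f^{(\ga)}$ with $|\ga| \le mp(\be)$ exists on $X^\o$ and lies in $\cZ^{mp(\be)-|\ga|,1}(X^\o)$. What remains is to control these derivatives up to order $m$ near $\p X$ and to identify their boundary extensions. By the uniform $\be$-cusp property together with a covering argument it is enough to work near a single boundary point; after translation and an orthogonal change of coordinates we place it at $0$ with associated standard cusp $\Ga$, so that $y + \Ga \subseteq X^\o$ for all $y \in X$ in a neighborhood of $0$.

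The core is the quantitative curve construction inside the cusp from \cite[Proposition 3.3 and 3.4]{Rainer:2021tr}. Given points $x, y \in X$ near $0$ (and, for the relevant second differences, a small increment), one builds a $\cC^\infty$-curve $c$ with values in $X^\o$ connecting the points in question by descending into $\Ga$ to depth comparable to $|x-y|^\be$ --- where the cross-section of the cusp first exceeds the transversal separation --- and returning; the parametrization is arranged so that, for each $k$, the $k$-th derivative of $c$ is bounded by a power of $|x-y|$, the exponents recording the two legs of the descent--ascent and the bookkeeping loss amounting to $q(\be) = \lceil 1/\be\rceil$ per leg, hence $p(\be) = \lceil 2/\be\rceil$ in total. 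Composing with $g$ and applying the Zygmund Taylor formula \Cref{lem:Taylor} to $g \o c \in \cZ^{Np(\be),1}(\R)$, together with \Cref{lem:firstder} at order $Np(\be)$ (which writes a first directional derivative as a uniform linear combination of the differences $D_{Np(\be)}(g \o c)$), one obtains for the relevant finite differences of the directional derivative $d_v g$ the bounds required by \Cref{thm:Zchar}, with constants uniform as $x, y \to 0$. This simultaneously produces a continuous extension of $d_v g$ across $\p X$ near $0$ and, by keeping track of the higher-order remainder, shows that this extension is of class $\Zyg^{(N-1)p(\be)}$. Upgrading the pointwise statement to ``$\p_i g$ is of class $\Zyg^{(N-1)p(\be)}$ on $X$'' is then done as in \Cref{prop:key4} and \Cref{cor:Banach}: given $c \in \cC^\infty(\R, X)$ one pushes $c$ a fixed smooth amount into the cusp to obtain curves in $X^\o$, applies \Cref{prop:key4} there, and passes to the limit using the uniform estimates above; \Cref{thm:Zchar} then certifies the class, establishing $(\ast_N)$.

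I expect the main obstacle to be the curve construction of the third paragraph: arranging that \emph{all} derivatives of $c$ are controlled by the stated powers of $|x-y|$, so that the constant in \Cref{lem:Taylor} --- which depends only on a bounded interval and, through it, on seminorms of $g \o c$ dominated in turn by seminorms of $c$ --- stays bounded as the points approach the apex of the cusp, and the accompanying bookkeeping that makes the drop in $\Zyg$-order exactly $p(\be)$. This is precisely what is carried out for the H\"older scale in \cite[Section 3]{Rainer:2021tr}, and the work here is to check that those arguments survive verbatim once ``$\cC^{\cdot,\al}$'' is replaced throughout by ``$\cZ^{\cdot,1}$'' and the H\"older Taylor estimate by \Cref{lem:Taylor}; this is legitimate because \Cref{lem:Taylor}, \Cref{lem:firstder}, \Cref{lem:error}, and \Cref{thm:Zchar} are the exact Zygmund counterparts of the H\"older ingredients used there, in the same way that \Cref{lem:inductionstep} mirrors \Cref{lem:inductionstepHL}.
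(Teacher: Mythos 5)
Your proposal follows essentially the same route as the paper, which proves this result simply by combining \Cref{thm:main1} with the proof of \cite[Propositions 3.3 and 3.4]{Rainer:2021tr}, i.e.\ by transplanting the cuspidal curve construction from the H\"older scale to the Zygmund scale with \Cref{thm:main1} and the Zygmund difference-quotient machinery replacing their H\"older counterparts. Your write-up supplies more detail than the paper (which leaves the transplantation to the reader), but the strategy and the key ingredients are the same.
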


By the inclusion $\cZ^{0,1} \subseteq \cC^{0,\al}$ for each $\al \in (0,1)$ (cf.\ \eqref{eq:inclusions}), also in this case
the partial derivatives of order $m$ satisfy
a local $\frac{\al\be}{2q(\be)}$-H\"older condition on $X$ for each $\al \in (0,1)$.

\section{Regularity of superposition on Zygmund spaces} \label{sec:superposition}

The goal of this section is to prove \Cref{lefttranslationLip}
which characterizes the Lipschitz regularity of the superposition operator
\[
  f_* : g \mapsto f \o g
\]
acting on the global Zygmund spaces $\La_{m+1}(\R^d)$ for $m\in \N_{\ge 1}$.

To put our result in perspective we recall the characterization of the $\cC^{k}$-regularity of $f_*$:
Let $m \in \N_{\ge 1}$ and $f : \R \to \R$ a function. Then:
\begin{enumerate}
  \item $f_* \La_{m+1}(\R^d) \subseteq \La_{m+1}(\R^d)$ if and only if $f \in \cZ^{m,1}(\R)$;
  see \cite[Theorem 1]{Bourdaud:2002tb} or \Cref{rem:acts}.
  \item For $k \in \N$
  the map $f_* : \La_{m+1}(\R^d) \to \La_{m+1}(\R^d)$ is of class $\cC^k$
  if and only if
  $f \in \cC^{m+k}(\R)$ and
  \begin{equation*}
     f^{(m+k)}(x+h)- 2f^{(m+k)}(x) + f^{(m+k)}(x-h)  = o(h) \quad \text { as } h \to 0^+
  \end{equation*}
  uniformly on compact subsets of $\R$; cf.\ \cite[Theorem 7]{Bourdaud:2002tb}.
\end{enumerate}

\Cref{lefttranslationLip} will follow from \Cref{prop:Lipsuf} and \Cref{prop:Lipnec}.

\subsection{Sufficiency}

We first show that $f \in \cZ^{m+k,1}(\R)$ implies that $f_*$ acts on $\La_{m+1}(\R^d)$ and is of class $\cC^{k-1,1}$.
Or approach is based on a version of \Cref{thm:main2} for maps between Banach spaces (analogous to \Cref{cor:Banach});
see \cite{FK88,KM97}. This allows for a simple lucid proof.
Similar results hold for $f_*$ acting on H\"older--Lipschitz spaces; see e.g. \cite[Theorem 2.14]{NenningRainer16}.

\begin{proposition} \label{prop:Lipsuf}
  Let $m,k \in \N_{\ge 1}$ and $f \in \cZ^{m+k,1}(\R)$.
  Then $f_*$ acts on $\La_{m+1}(\R^d)$ and $f_*: \La_{m+1}(\R^d) \rightarrow \La_{m+1}(\R^d)$
  is of class $\cC^{k-1,1}$.
\end{proposition}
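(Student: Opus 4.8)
The plan is to deduce Proposition~\ref{prop:Lipsuf} from the Banach-space version of Boman's theorem (\Cref{thm:main2}, in the form analogous to \Cref{cor:Banach}). The key observation is that the superposition operator $f_*$ should be tested on smooth curves in $\La_{m+1}(\R^d)$. First I would verify the ``acts on'' statement: if $f \in \cZ^{m+k,1}(\R) \subseteq \cZ^{m,1}(\R)$ (using \eqref{eq:inclusions}), then by item (1) recalled at the start of \Cref{sec:superposition} (i.e.\ \cite[Theorem 1]{Bourdaud:2002tb}, or the self-contained \Cref{rem:acts}), $f_*$ maps $\La_{m+1}(\R^d)$ into itself. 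So the real content is the $\cC^{k-1,1}$-regularity.

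The strategy for regularity is as follows. By the Banach-space analogue of \Cref{thm:main2}, a map $F : \La_{m+1}(\R^d) \to \La_{m+1}(\R^d)$ is of class $\cC^{k-1,1}$ if and only if for every $c \in \cC^\infty(\R,\La_{m+1}(\R^d))$ the composite $F \o c$ lies in $\cC^{k-1,1}(\R,\La_{m+1}(\R^d))$, and (again using the convenient-calculus machinery and testing with functionals) this in turn can be checked by pairing with the evaluation functionals $\on{ev}_x$, $x\in\R^d$, together with the finite-difference characterization. Concretely, I would take a smooth curve $t \mapsto g_t$ in $\La_{m+1}(\R^d)$, form $t \mapsto f \o g_t$, and show that $t\mapsto f\o g_t$ is a $\cC^{k-1,1}$-curve in $\La_{m+1}(\R^d)$. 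The computation that makes this work is a repeated application of the chain rule for finite differences \eqref{chainrule} and the Leibniz rule \eqref{eq:Leibniz}: differentiating $f\o g_t$ in $t$ up to order $k-1$ produces, via Fa\`a di Bruno, expressions of the form $(f^{(j)}\o g_t)\cdot \prod_\ell \partial_t^{r_\ell} g_t$ with $j \le k-1$; each factor $\partial_t^{r_\ell} g_t$ lies in $\La_{m+1}(\R^d)$ (since $t\mapsto g_t$ is smooth into $\La_{m+1}$), and $f^{(j)} \in \cZ^{m+k-j,1}(\R) \subseteq \cZ^{m,1}(\R)$, so by the ``acts on'' statement $f^{(j)}\o g_t \in \La_{m+1}(\R^d)$; hence each product term is in $\La_{m+1}(\R^d)$ because $\La_{m+1}(\R^d)$ is a Banach algebra (up to the usual care about boundedness, which is where one uses that $g_t$ and its derivatives stay in a bounded set for $t$ in a compact interval). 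The top-order term, involving $f^{(k)}$ which is only of class $\cZ^{m,1}$, is exactly the one whose finite difference of order one in $t$ is $\La_{m+1}$-bounded rather than continuous — this yields the Lipschitz, not $\cC^k$, conclusion, and dovetails with item (2) recalled from \cite{Bourdaud:2002tb}.

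The main obstacle, and the step needing the most care, is the bookkeeping of the chain-and-product-rule expansion for the $(k-1)$-st difference quotient of $f_*$ along a curve, checking that (i) all intermediate terms genuinely lie in $\La_{m+1}(\R^d)$ with norms controlled on compact $t$-intervals, uniformly, and (ii) the single ``bad'' top-order term contributes a uniformly $\La_{m+1}$-bounded (in $t,h$) quantity $h^{-1}$ times a second difference, i.e.\ satisfies the Zygmund-type estimate rather than a $o(h)$ estimate. For (i) I would lean on the multiplicativity of $\La_{m+1}(\R^d)$ (or rather the estimate $\|uv\|_{\La_{m+1}} \le C\|u\|_{\La_{m+1}}\|v\|_{\La_{m+1}}$ for the relevant range) together with the ``acts on'' bound $\|f^{(j)}\o g\|_{\La_{m+1}} \le \Phi(\|g\|_{\La_{m+1}})$ for a suitable increasing $\Phi$; for (ii) I would use the Taylor-type remainder control on $f^{(k)} \in \cZ^{m,1}(\R)$ exactly as in \Cref{lem:Taylor} / \Cref{lem:firstder} above. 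Once the curve-wise statement is established, the passage back to $\cC^{k-1,1}$-regularity of $f_* : \La_{m+1}(\R^d)\to\La_{m+1}(\R^d)$ is formal, invoking the Banach-space version of \Cref{thm:main2} and the uniform boundedness principle, as in \cite{FK88,KM97}.
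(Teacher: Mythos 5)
Your overall frame agrees with the paper's: reduce to curves via the Banach-space (Lipschitz) analogue of \Cref{thm:main2}/\Cref{cor:Banach}, dispose of the ``acts on'' part by citing \cite[Theorem 1]{Bourdaud:2002tb} or \Cref{rem:acts}, and then show that $f_*$ sends $\cC^\infty$-curves $t\mapsto g(t,\cdot)$ to $\cC^{k-1,1}$-curves. Structurally you differ in two ways: the paper only does the case $k=1$ by hand and then gets $k>1$ by a two-line induction from $\p_t\big(f\o g\big)=(f'\o g)\,g_t$ and the $\cC^{k-2,1}$-regularity of $(f')_*$, whereas you propose a full order-$(k-1)$ Fa\`a di Bruno expansion in $t$; and where the paper estimates all finite differences by hand, you want to lean on the Banach-algebra property of $\La_{m+1}(\R^d)$ plus a quantitative acts-on bound $\|f^{(j)}\o g\|_{\La_{m+1}}\le\Phi(\|g\|_{\La_{m+1}})$. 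That shortcut is defensible, but note that the quantitative bound is precisely the nontrivial content of Bourdaud's theorem, so you are not so much avoiding the analysis as outsourcing it.

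The genuine gap is that you never confront the estimate that is the actual crux of the proof. To show $t\mapsto f\o g(t,\cdot)$ is Lipschitz into $\La_{m+1}$ one must bound, uniformly in $x\in\R$ and small $v$, the second $x$-differences $\De^2_v$ of products whose factors include $(f^{(m+1)}\o g)(x)$, where $f^{(m+1)}$ is only of class $\cZ^{0,1}$, hence \emph{not} Lipschitz. The paper handles this (Facts 1 and 2) with two specific devices: for $\De^2_v(f^{(m+1)}\o g)$, the chain rule \eqref{chainrule} produces $\De^1_{\De^2_v g(x)}f^{(m+1)}$, which is $O(|\De^2_v g(x)|^{\al})=O(|v|^{\al(1+\be)})$ using $f^{(m+1)}\in\cC^{0,\al}$ and $g\in\La_{1+\be}$, and one must choose $\al=(1+\be)^{-1}$ to recover $O(|v|)$; for the Leibniz cross terms $\De^1_v\Pi_1\cdot\De^1_v\Pi_2$ one needs $f^{(m+1)}\in\cC^{0,\om}$ with $\om(t)=t\log\frac1t$ so that $\om(|v|)^2=O(|v|)$. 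Neither device appears in your proposal; instead, for the ``bad'' top-order term you point to \Cref{lem:Taylor} and \Cref{lem:firstder}, which belong to the proof of \Cref{thm:main1} and play no role here — the Lipschitz (rather than $\cC^k$) conclusion comes from the integral representation $h(t,x)-h(s,x)=\int_s^t(f'\o g)g_t\,d\ta$ together with uniform boundedness of the integrand in $\La_{m+1}$, not from a Taylor remainder. Finally, your suggestion that the $\cC^{k-1,1}$-regularity of the curve can be checked ``by pairing with the evaluation functionals $\on{ev}_x$'' is not a valid reduction: pointwise boundedness in $x$ does not control the $\La_{m+1}$-norm; what is required, and what the paper proves, is a bound on the difference quotients that is uniform in $x$ and $v$ simultaneously.
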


\begin{proof}
Let $r:= m+1$.
For simplicity let $d=1$.
For the fact that $f_* \La_{r}(\R) \subseteq \La_{r}(\R)$ we refer to \cite[Theorem 1]{Bourdaud:2002tb}, but
see also \Cref{rem:acts}.
By \cite[Theorem 4.3.27]{FK88} or \cite[Corollary 15]{FF89} (i.e.\ the Lipschitz analogue of \Cref{cor:Banach}),
it suffices to check that $f_* : \La_{r}(\R) \to \La_{r}(\R)$ maps $\cC^\infty$-curves to $\cC^{k-1,1}$-curves.
That $t \mapsto  g(t, \cdot )$ is $\cC^\infty$ in $\La_r(\R)$
means that, for all $\ell \in \N$,  $\|\p_1^\ell  g(t, \cdot )\|_{\La_r}$ is locally bounded in $t$ (cf.\ \cite[4.1.19]{FK88}).

We first prove the case $k=1$.
Set $h(t,x) := f(g(t,x))$.
In the following we denote, for clarity,
by $h_t$, $g_t$, etc.\ the partial derivatives with respect to $t$
and write $\p_x$ for partial derivatives with respect to $x$.

Then,
\[
  h(t,x) - h(s,x) = \int_s^t h_t(\ta,x)\, d \ta = \int_s^t (f'\o g)(\ta,x) g_t(\ta,x)\, d \ta
\]
and, for $\ell \le m$,
\begin{align*}
  \p_x^\ell h(t,x) - \p_x^\ell h(s,x)
  &=
  \sum_{j=0}^\ell \binom{\ell}{j} \int_s^t \p_x^{j} (f'\o g)(\ta,x)  \p_x^{\ell-j} g_t(\ta,x)  \, d \ta.
\end{align*}
By Fa\`a di Bruno's formula,
\begin{align*}
  \p_x^{j}  (f'\o g)(\ta,x)
  &=
  \sum_{i=1}^{j} \sum_{\ga \in \Ga(i,j)}
       c_{\ga}\, (f^{(i+1)} \o g)(\ta,x) \p_x^{\ga_1}g(\ta,x) \cdots \p_x^{\ga_l} g(\ta,x), \quad (j \ge 1),
\end{align*}
where $\Ga(i,j):= \{\ga \in (\N_{\ge 1})^i : |\ga| = j\}$ and $c_\ga := \frac{j!}{i! \ga!}$,
it is readily checked that $t \mapsto h(t, \cdot)$ is locally Lipschitz into
$\cC^{m}_b(\R) := \{u \in \cC^{m}(\R) : \sup_{\ell \le m} \|u^{(\ell)}\|_{L^\infty(\R)} <\infty \}$.

To see that $t \mapsto h(t, \cdot)$ is locally Lipschitz into $\La_{r}(\R)$ it remains to show

\begin{claim*}
  For each bounded interval $I\subseteq \R$ the set
  \begin{align*}
    \Big\{
    \frac{\De^2_v \p_x^{m} h(t,x) - \De^2_v \p_x^{m} h(s,x)}{|v||t-s|} :
    x,v \in \R,\, v \ne 0,\, s \ne t \in I
    \Big\},
  \end{align*}
  is bounded,
  where the second finite difference $\De^2_v$ acts in the $x$-variable.
\end{claim*}

In view of the above, it is enough to show that for all $0\le i \le j \le m$ and $\ga \in \Ga(i,j)$,
\begin{equation} \label{eq:claim}
      \De^2_v \big[(f^{(i+1)} \o g)(\ta,x) \p_x^{\ga_1}g(\ta,x) \cdots \p_x^{\ga_l} g(\ta,x) \p_x^{m-j} g_t(\ta,x) \big] = O(|v|),
\end{equation}
uniformly in $x \in \R$ and $\ta \in I$.
Let us from now on  suppress the dependence on $\ta$ in the notation.
We can assume that $|v|$ is small;
otherwise the result follows from the fact that $t \mapsto h(t, \cdot)$ is locally Lipschitz
into $\cC^{m}_b(\R)$.

Each of the factors in the product
\begin{equation} \label{eq:product}
   (f^{(i+1)} \o g)(x) \p_x^{\ga_1}g(x) \cdots \p_x^{\ga_l} g(x) \p_x^{m-j} g_t(x)
\end{equation}
is globally bounded in $x$, locally in $\ta$.
Thus, by the product rule \eqref{eq:Leibniz}, in order to prove \eqref{eq:claim}
it suffices to show the following two facts.
\begin{description}
  \item[Fact 1] For each factor $\Pi$ in the product \eqref{eq:product} we have $\De^2_v \Pi = O(|v|)$, uniformly in $x \in \R$ and $\ta \in I$.
  \item[Fact 2] For any two factors $\Pi_1$ and $\Pi_2$ in the product \eqref{eq:product} we have $\De^1_v \Pi_1 \cdot \De^1_v \Pi_2 = O(|v|)$, uniformly in $x \in \R$ and $\ta \in I$.
\end{description}

\subsubsection*{Fact 1}
For $\Pi = \p_x^{\ell}g(x)$ and $\Pi = \p_x^{\ell}g_t(x)$, where $\ell \le m$,
the assertion $\De^2_v \Pi = O(|v|)$
holds; either by assumption if $\ell = m$
or using \eqref{eq:integral} if $\ell<m$.
It remains to consider $\Pi =(f^{(\ell+1)} \o g)(x)$ for $\ell \le m$.
In order to estimate
$\De^2_v (f^{(m+1)} \o g)(x)$
we have, by \eqref{chainrule}, to deal with terms of the form
\begin{equation} \label{eq:terms}
   \De^1_{\De^2_v g(x)} f^{(m+1)}(y) \quad \text{ and }\quad \De^2_{\De^1_v g(x)} f^{(m+1)}(y),
\end{equation}
where $y$ ranges over a bounded set.
By assumption and \eqref{eq:inclusions}, $f^{(m+1)} \in \cZ^{0,1}(\R) \subseteq \cC^{0,\al}(\R)$ for all $\al \in (0,1)$
so that
\begin{align*}
   \De^1_{\De^2_v g(x)} f^{(m+1)}(y) = O(|\De^2_v g(x)|^\al).
\end{align*}
Because $g \in \La_{r}(\R) \hookrightarrow \La_{1+\be}(\R)$ for any $\be \in (0,1)$ (as $r\ge 2$), \Cref{thm:Krantz} implies
\begin{align*}
  \De^2_v g(x) = O(|v|^{1+\be}).
\end{align*}
Taking $\al := (1+\be)^{-1}$ we conclude that
\begin{align*}
   \De^1_{\De^2_v g(x)} f^{(m+1)}(y) = O(|v|).
\end{align*}
For the second term in \eqref{eq:terms} we have
\begin{align*}
   \De^2_{\De^1_v g(x)} f^{(m+1)}(y) = O(|\De^1_v g(x)|) = O(|v|),
\end{align*}
since $g$ is globally Lipschitz.
For $\ell < m$ use \eqref{eq:integral} and similar arguments.

\subsubsection*{Fact 2}
By \eqref{chainrule} and since $f^{(m+1)} \in \cZ^{0,1}(\R) \subseteq \cC^{0,\om}(\R)$ where $\om(t):=t \log \frac{1}{t}$, we
find
\begin{align*}
\De^1_v (f^{(m+1)} \o g)(x) &= \De^1_{\De^1_v g(x)} f^{(m+1)}(g(x)) = O(\om(|\De^1_v g(x)|)) = O(\om(|v|))
\end{align*}
as well as $\De^1_v \p_x^{m} g(x) = O(\om(|v|))$ and $\De^1_v \p_x^{m}g_t(x) = O(\om(|v|))$.
If the order of differentiation in $x$ is lower than $m$, then all these terms are actually $O(|v|)$.
In any case it follows that $\De^1_v \Pi_1 \cdot \De^1_v \Pi_2 = O(|v|)$.

\medskip
This ends the proof for $k=1$.
Now we argue by induction on $k$.
Let $k>1$ and $f \in \cZ^{m+k,1}(\R)$.
Then $t \mapsto h_t(t,\cdot) = (f' \o g(t,\cdot))g_t(t,\cdot)$
is of class $\cC^{k-2,1}$ into $\La_r(\R)$, since $(f')_*$ is of class $\cC^{k-2,1}$
by induction hypothesis.
Consequently, $t \mapsto h(t,\cdot)$ is of class $\cC^{k-1,1}$ into $\La_r(\R)$ (cf.\ \cite[Theorem 4.3.24]{FK88}).
\end{proof}

\begin{remark} \label{rem:acts}
  It is not difficult to
  build a proof of the fact that $f_* \La_{m+1}(\R) \subseteq  \La_{m+1}(\R)$ if $f \in \cZ^{m,1}(\R)$
  from the arguments used above.
  To see that, conversely, $f_* \La_{m+1}(\R) \subseteq  \La_{m+1}(\R)$ implies $f \in \cZ^{m,1}(\R)$  consider $f\o g$, where $g$ is a $\cC^\infty$-function
  with compact support and $g(x)=x$ on a compact interval.
\end{remark}

\subsection{Necessity}

\begin{proposition} \label{prop:Lipnec}
  Let $m,k \in \N_{\ge 1}$ and $f : \R \to \R$ a function.
  Suppose that $f_*$ acts on $\La_{m+1}(\R)$ and $f_*: \La_{m+1}(\R) \rightarrow \La_{m+1}(\R)$
  is of class $\cC^{k-1,1}$. Then $f \in \cZ^{m+k,1}(\R)$.
\end{proposition}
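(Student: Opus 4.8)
The plan is to prove necessity by reducing to the $1$-variable situation with a judicious choice of test function and exploiting the assumed Lipschitz regularity of $f_*$ at the level of difference quotients. Concretely, I would fix a compactly supported $\cC^\infty$ bump $g_0 : \R \to \R$ that equals the identity on a compact interval $J$; then $g_0 \in \La_{m+1}(\R)$, and for $t$ near $0$ the affine deformations $t \mapsto g_0(\cdot) + t\,\varphi(\cdot)$ (with $\varphi$ a suitable compactly supported smooth function) trace out $\cC^\infty$-curves in $\La_{m+1}(\R)$. The hypothesis says $f_* = f\circ(\cdot)$ is of class $\cC^{k-1,1}$ on $\La_{m+1}(\R)$, hence maps such $\cC^\infty$-curves to $\cC^{k-1,1}$-curves into $\La_{m+1}(\R)$; this is exactly the kind of statement that, via \Cref{cor:Banach} (Lipschitz analogue) together with \Cref{thm:Zchar}/\Cref{thm:Lchar}, can be unwound into uniform bounds on equidistant difference quotients of the composite.

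The key steps, in order, are: (1) From $f_*$ being $\cC^{k-1,1}$ on $\La_{m+1}(\R)$, deduce that the $(k-1)$-st derivative of the map $t\mapsto f_*(g_0 + t\varphi)$ exists in $\La_{m+1}(\R)$ and is locally Lipschitz; differentiating $k-1$ times in $t$ produces, by the chain rule, an expression whose leading term is $(f^{(k-1)}\circ g_0)\cdot \varphi^{k-1}$ plus lower-order products involving $f^{(j)}\circ g_0$ for $j<k-1$. (2) Since $g_0 = \mathrm{id}$ on $J$ and by choosing $\varphi$ appropriately supported, on a subinterval this derivative is essentially $f^{(k-1)}$ composed with a coordinate, up to controlled error terms; so the statement "$\p_t^{k-1} f_*(g_0 + t\varphi)|_{t=0} \in \La_{m+1}(\R)$" forces $f^{(k-1)}$ to enjoy $\cZ^{m,1}$-type regularity on $J$. (3) The precise extraction of $\cZ^{m,1}$ regularity of $f^{(k-1)}$ from membership in $\La_{m+1}$ of the composite is done exactly through \Cref{thm:Krantz} and \Cref{thm:Zchar}: one tests $|\De_h^{m+2}(\,\cdot\,)(x)| \le C|h|^{m+1}$ on the relevant piece. (4) Assemble: $f^{(k-1)} \in \cZ^{m,1}$ locally, hence (as $J$ was arbitrary) $f^{(k-1)} \in \cZ^{m,1}(\R)$, which is precisely $f \in \cZ^{m+k-1,1}(\R)$; a final bootstrap using that the top case $k=1$ already gives $f\in\cZ^{m,1}(\R)$ (this is \Cref{rem:acts}: $f_*$ acts on $\La_{m+1}$ forces $f\in\cZ^{m,1}$) upgrades this to the sharp exponent $f\in\cZ^{m+k,1}(\R)$ — more carefully, one runs the Lipschitz ($\cC^{k-1,1}$, not merely $\cC^{k-1}$) hypothesis all the way, so that the top derivative picks up the extra Zygmund-Lipschitz order, yielding $f^{(k)} \in \cZ^{m,1}$ directly.

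I expect the main obstacle to be isolating the \emph{leading} term after differentiating $t\mapsto f_*(g_0+t\varphi)$ in $t$ up to order $k-1$ and controlling it in the $\La_{m+1}(\R)$ norm: the Faà di Bruno expansion of $\p_x^\ell(f^{(j)}\circ g)$ (the same combinatorics appearing in the proof of \Cref{prop:Lipsuf}) produces many cross terms involving $\p_x$-derivatives of $g_0$ and of $\varphi$, and one must argue that all of these terms are genuinely \emph{less singular} — i.e.\ land in $\cC^{m}_b(\R)$ or satisfy the Zygmund condition with room to spare — so that only the pure term $(f^{(k-1+?)}\circ g_0)\varphi^{\cdots}$ carries the obstruction. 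A clean way to handle this is to choose $\varphi$ so that it too equals the identity (or a constant) on a slightly smaller interval $J' \subset J$, killing all derivatives $\p_x^i\varphi$ with $i\ge 1$ there, and to exploit that $g_0$ and all its derivatives are constant on $J$; this collapses the Faà di Bruno sum on $J'$ to a single clean term, reducing the whole problem to the scalar identity "$x\mapsto f^{(k)}(x)$ on $J'$ is the $(m+1)$-st Besov regularity part of a function in $\La_{m+1}(\R)$", which is then immediate from \Cref{thm:Krantz}. The remaining bookkeeping — that the Lipschitz (rather than merely $\cC^{k-1}$) hypothesis is what produces the extra unit of differentiability, matching the sufficiency direction of \Cref{prop:Lipsuf} — is then a matter of tracking one more difference quotient through the same estimates.
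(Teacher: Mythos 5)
Your starting point is the right one and matches the paper's: test $f_*$ on the translation-type curve $c(t)=g_0+t\varphi$ with $g_0=\mathrm{id}$ on a compact interval and $\varphi$ constant there (the paper takes $\varphi\equiv\rho_{[-1,1]}$, so $(f_*(c))(t)(x)=f(x+t)$ on $I\times[-1,1]$), and then unwind the $\cC^{k-1,1}$-curve property in $\La_{m+1}(\R)$ into difference-quotient bounds. However, your execution plan has two genuine gaps. First, you propose to differentiate $t\mapsto f_*(g_0+t\varphi)$ up to order $k-1$ by the chain rule, producing terms $f^{(k-1)}\o g_0$ etc.; this presupposes that $f$ is $(k-1)$-times differentiable, which is not known a priori: \Cref{rem:acts} only gives $f\in\cZ^{m,1}(\R)\subseteq\cC^m(\R)$, and $k-1$ may well exceed $m$. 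The paper sidesteps this entirely by never differentiating in $t$: it uses the finite-difference characterization of $\cC^{k-1,1}$-curves (boundedness of the $k$-th order equidistant difference quotient $\de^{k}_{\on{eq}}$ of the Banach-space-valued curve, the $\Hol^{k-1}_1$ analogue of \Cref{thm:ZcharE}), so that only finite differences $\De^k_t$ of $f$ itself ever appear.

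Second, and more importantly, the step that produces the sharp exponent is missing. Your own accounting in steps (1)--(4) lands at $f^{(k-1)}\in\cZ^{m,1}$, i.e.\ $f\in\cZ^{m+k-1,1}(\R)$, one order short, and the promised ``bootstrap using the Lipschitz rather than $\cC^{k-1}$ hypothesis'' is asserted but never described. The mechanism that closes this gap is the identification of the $t$-increment with the $x$-increment: because the curve is a translation, the $\cC^{k-1,1}$ bound combined with the $\La_{m+1}$-norm yields boundedness of $\De^k_t\De^2_v f^{(m)}(x)/(t^k v)$, and only upon setting $t=v$ does this collapse to $t\,\de^{k+2}_{\on{eq}}f^{(m)}(x;t)$ bounded, which is the order-$k$ Zygmund condition for $f^{(m)}$ via \Cref{thm:Zchar}. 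Without this diagonal substitution the mixed bound does not reduce to a one-dimensional Zygmund condition, and ``$f^{(k)}\in\La_{m+1}$ locally'' cannot be read off from Lipschitz continuity of the curve alone ($\La_{m+1}(\R)$ is an $L^\infty$-type space, so Lipschitz curves into it need not be differentiable). A smaller omission: \Cref{thm:Zchar} also requires local boundedness of the lower-order quotient $\de^{k}_{\on{eq}}f^{(m)}(x;t)$, which the paper obtains by running the same argument in $\on{Lip}_m(\R)$ (using $\La_{m+1}\hookrightarrow\on{Lip}_m$) and applying \Cref{thm:Lchar} twice; your proposal does not address this.
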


\begin{proof}
    By \Cref{rem:acts}, $f \in \cZ^{m,1}(\R)$, in particular, $f \in \cC^{m}(\R)$.

    For any compact interval
    $I\subseteq \R$ let $\rh_I: \R \to \R$ be a $\cC^\infty$-function with compact support such that $\rh_I(x) = x$
    for all $x \in I$.
    Then $g(x) := \rh_I(x)$ belongs to $\La_{m+1}(\R)$
    and $c(t) := g + \rh_{[-1,1]}(t)$ defines a $\cC^\infty$-curve in $\La_{m+1}(\R)$ with $c(0)=g$ and $c(t)=g + t$ if $t \in [-1,1]$.
    By assumption, $f_*$ maps $\cC^\infty$-curves in $\La_{m+1}(\R)$ to $\cC^{k-1,1}$-curves.
    Thus $f_* (c)$ is a $\cC^{k-1,1}$-curve in $\La_{m+1}(\R)$
    and $(f_* (c))(t)(x) = f(x + t)$ if $x \in I$ and $t \in [-1,1]$.
    Thus, by the $\Hol^{k-1}_1$-version of \Cref{thm:ZcharE}, see also \cite[Lemma 12.4]{KM97},
    \begin{align*}
         \frac{\de^k_{\on{eq}} \De^2_v (f_*(c))^{(m)}(x;t)}{v}  = \frac{\De^k_t \De^2_v f^{(m)}(x)}{t^k v}
    \end{align*}
    is bounded for all $x \in I$ and all small $v,t \in \R \setminus \{0\}$. For $t = v$ we see that
    \begin{align*}
         \frac{\De^{k+2}_t f^{(m)}(x)}{t^{k+1}} = t \de^{k+2}_{\on{eq}} f^{(m)}(x;t)
    \end{align*}
    is bounded for all $x \in I$ and all small $t\ne 0$.
    Since $\La_{m+1}(\R) \hookrightarrow \on{Lip}_m(\R)$,
    $f_*(c)$ also is a $\cC^{k-1,1}$-curve in $\on{Lip}_m(\R)$ whence similar considerations give that
    \begin{align*}
        \frac{\De^k_t \De^1_v f^{(m-1)}(x)}{t^k v}
    \end{align*}
    is bounded for all $x \in I$ and all small $v,t$.
    We see that
    $\de^{k+1}_{\on{eq}} f^{(m-1)}(x;t)$
    is bounded for $x \in I$ and small $t$. Invoking \Cref{thm:Lchar} twice, we find that
    $f^{(m-1)} \in \cC^{k,1}(\R)$ which entails $f \in \cC^{m+ k -1,1}(\R)$, and so
    $\de^{k}_{\on{eq}} f^{(m)}(x;t)$ is locally bounded in
    $(x,t) \in \R \times (\R\setminus \{0\})$.
    By \Cref{thm:Zchar}, we have $f^{(m)} \in \cZ^{k,1}(\R)$ and consequently $f \in \cZ^{m+k,1}(\R)$.
\end{proof}

\subsection*{Acknowledgments}
We thank the anonymous referee for helpful remarks.


\def\cprime{$'$}
\providecommand{\bysame}{\leavevmode\hbox to3em{\hrulefill}\thinspace}
\providecommand{\MR}{\relax\ifhmode\unskip\space\fi MR }
\providecommand{\MRhref}[2]{%
  \href{http://www.ams.org/mathscinet-getitem?mr=#1}{#2}
}
\providecommand{\href}[2]{#2}

\end{document}